\DeclareMathAlphabet{\mathpzc}{OT1}{pzc}{m}{it}
\DeclareMathOperator{\im}{Im}
\newcommand{\lpc}{\mathbf{lpc}}
\newcommand{\san}{\mathscr{S}}
\newcommand{\wildn}{\mathbf{w}_{n}}
\newcommand{\wild}{\mathbf{w}}
\newcommand{\rg}{\mathbf{rg}}
\newcommand{\rgn}{\mathbf{rg}_{n}}
\newcommand{\sw}{\bigcurlyvee}
\newcommand{\ui}{[0,1]}
\renewcommand{\int}{\text{int}}
\newcommand{\wt}{\widetilde}
\newcommand{\mcc}{\mathcal{C}}
\newcommand{\scrb}{\mathscr{B}}
\newcommand{\scru}{\mathscr{U}}
\newcommand{\scrs}{\mathscr{S}}
\newcommand{\bbd}{\mathbb{D}}
\newcommand{\bbe}{\mathbb{E}}
\newcommand{\bbn}{\mathbb{N}}
\newcommand{\bbr}{\mathbb{R}}
\newcommand{\bbz}{\mathbb{Z}}
\newcommand{\ov}{\overline}
\newtheorem{theorem}{Theorem}[section]
\newtheorem{lemma}[theorem]{Lemma}
\newtheorem{proposition}[theorem]{Proposition}
\newtheorem{corollary}[theorem]{Corollary}
\theoremstyle{definition}\newtheorem{definition}[theorem]{Definition}
\newtheorem{example}[theorem]{Example}
\newtheorem{remark}[theorem]{Remark}
\newtheorem{problem}[theorem]{Problem}
\begin{document}
\title[Higher-homotopy wild sets]{Higher-homotopy wild sets}

\author[J. Brazas]{Jeremy Brazas}
\address{West Chester University\\ Department of Mathematics\\
West Chester, PA 19383, USA}
\email{jbrazas@wcupa.edu}

\author[A. Mitra]{Atish Mitra}
\address{Montana Technical University\\ Department of Mathematical Sciences\\
1300 West Park Street Butte, MT 59701, USA}
\email{amitra@mtech.edu}

\subjclass[2010]{Primary 55Q52 , 55Q35 ; Secondary 08A65  }
\keywords{$\pi_n$-wild set, wild topology, homotopy invariant, $n$-dimensional infinite earring}
\date{\today}

\begin{abstract}
The $\pi_n$-wild set $\mathbf{w}_{n}(X)$ of a topological space $X$ is the subspace of $X$ consisting of the points at which there exists a shrinking sequence of essential based maps $S^n\to X$. In this paper, we show that the homotopy type of $\mathbf{w}_{n}(X)$ is a homotopy invariant of $X$ and, in analogy to the known one-dimensional case, we show that for certain $n$-dimensional $\pi_n$-shape injective metric spaces, the homeomorphism type of $\mathbf{w}_{n}(X)$ is a homotopy invariant of $X$. We also prove that the $\pi_n$-wild set of a Peano continuum can be homeomorphic to any compact metric space.
\end{abstract}

\maketitle

\section{Introduction}

There are many potential definitions of ``wild point" in a topological space. From the perspective of algebraic topology, if a space has wild points, e.g. if it fails to be locally contractible or semilocally simply connected, then some standard methods in homotopy theory fail to apply to the space in question. Notably, the Whitehead Theorem stating that ``weakly homotopy equivalence spaces are homotopy equivalent" \cite{whiteheadtheorem} may not be applicable. On the other hand, the existence of wild points is surprisingly helpful for distinguishing and classifying homotopy types of some families of Peano continua (compact, connected, locally connected metric spaces). It is a major achievement in the algebraic topology of locally complicated spaces that the Whitehead Theorem holds for one-dimensional Peano continua \cite{Edaonedim} and planar Peano continua \cite{Kent}. The following question remains open.

\begin{problem}\label{whproblem}
Does the Whitehead Theorem hold for all finite dimensional Peano continua? That is, if $f:X\to Y$ is a weak homotopy equivalence of finite-dimensional Peano continua, must $f$ be a homotopy equivalence?
\end{problem}

Problem \ref{whproblem} has a negative answer for general Peano continua as infinite dimensional, non-contractible spaces with trivial homotopy groups are constructed by Karimov and Repov\v s in \cite{krhawaiiangroups} and \cite{krnoncontractible}. In \cite[Problem 5.1]{krnoncontractible}, Karimov and Repov\v s ask if there exists a \textit{finite dimensional} non-contractible Peano continuum with trivial homotopy groups. The existence of such a space would answer Problem \ref{whproblem} in the negative but no counterexample has been produced so far. 

A key to proving the Whitehead Theorem in the one-dimensional and planar cases is the use of the ``wild" or ``bad" set $\wild_1(X)$ consisting of all points in $X$ at which $X$ fails to be semilocally simply connected (and denoted various ways in the literature). For one-dimensional and planar $X$, the wild set $\wild_1(X)$ is rigid in the sense that it is fixed under all maps $X\to X$ that are homotopic to the identity map. Moreover, this rigidity implies that the homeomorphism type of $\wild_1(X)$ is a homotopy invariant of $X$. In fact, for one-dimensional spaces $X$ where all points are wild points, the isomorphism type of the fundamental group $\pi_1(X)$ alone determines the entire homeomorphism type of the space \cite{ConnerEda,ConnerEda2}. These invariance results are implicit to the arguments used in \cite{ConnerKent,EdaSpatial} and are explicitly stated and proved in \cite[Section 9]{BFPantsspace}. In addition to the depth of applications in low-dimensional settings, wild sets also have utility in broader contexts since, in general, the \textit{homotopy type} of $\wild_1(X)$ is a homotopy invariant of $X$ \cite[Theorem 2.11]{Braztransprodreduc}. Whether one seeks to answer Problem \ref{whproblem} in the affirmative or negative, the successful one-dimensional and planar theories suggest the relevancy of higher-dimensional wild sets.

In this paper, we define and study subspaces of a given space $X$ that consist of points where algebraic wildness occurs in higher-dimensional homotopy groups. We say that a point $x\in X$ is a \textit{$\pi_n$-wild point} if there exists a sequence of essential, i.e. non-null-homotopic, maps $f_k:S^n\to X$, $k\in\bbn$ based at $x$ that converge to the constant map at $x$ in the compact-open topology. To simplify this concept, we note that such sequences can be adjoined to form what we call a ``fully essential" based map $f:(\bbe_n,b_0)\to (X,x)$ on the $n$-dimensional infinite earring space $\bbe_n$. The \textit{$\pi_n$-wild set} of $X$ is the subspace $\wildn(X)$ of $X$ consisting of all $\pi_n$-wild points of $X$.

In Sections \ref{section2wildset} and \ref{section3properties} we establish various properties and examples relevant to $\pi_n$-wild sets. In Section \ref{section4homotopyinvariance}, we prove that the homotopy type of $\wildn(X)$ is a homotopy invariant of $X$, that is, if $X\simeq Y$, then $\wildn(X)\simeq \wildn(Y)$ (see Theorem \ref{homotopyinvariance}) and we use this fact to distinguish homotopy types without directly appealing to uncountable algebraic invariants. In Section \ref{section5makingspaceswild}, we use ``shrinking point-attachment spaces," similar to those applied in \cite{Edamakingspaceswild,EdaHigasikawa}, to prove the following theorem. In particular, this result shows that the $\pi_n$-wild set of a Peano continuum may be an arbitrary compact metric space.

\begin{theorem}\label{thm1}
Let $n\geq 1$. If $X$ is a Peano continuum, then $\wildn(X)$ is a compact metric space. Moreover, if $C$ is any compact metric space, then there exists a Peano continuum $X$ such that
\begin{enumerate}
\item $\wildn(X)=C$,
\item $X\backslash C$ is a countable disjoint union of open $1$-cells and open $n$-cells,
\item $\dim(X)=\max\{\dim(C),n\}$.
\end{enumerate}
\end{theorem}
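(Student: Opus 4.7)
The plan for the first statement is to show $\wildn(X)$ is closed in $X$, hence compact metric as a closed subspace of a compact metric space. Suppose $x\notin \wildn(X)$; by definition there is an open neighborhood $U$ of $x$ such that every based map $(S^n,\ast)\to (U,x)$ is null-homotopic in $X$. Using local path-connectedness of $X$, pick a path-connected open $V\subset U$ containing $x$. For any $y\in V$ and any based map $g:(S^n,\ast)\to(V,y)$, a path from $x$ to $y$ in $V$ together with the standard change-of-basepoint isomorphism $\pi_n(X,y)\to\pi_n(X,x)$ produces a representative at $x$ with image in $V\subset U$, which is null-homotopic by hypothesis. Hence $[g]=0$ in $\pi_n(X,y)$, so $g$ is null-homotopic in $X$ and $y$ is non-wild; this shows $X\setminus \wildn(X)$ is open.

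For the realization, I would use a shrinking point-attachment construction in the spirit of those referenced in the introduction. Fix a countable dense subset $D = \{x_m\}_{m\in\bbn}$ of $C$ (the case $C=\emptyset$ being trivial) and embed $C$ isometrically into $\ell^2$. Build $X\subset\ell^2$ in two stages: (i) add a countable family $\{A_j\}$ of arcs with endpoints in $C$, pairwise disjoint except at endpoints and meeting $C$ only at endpoints, whose diameters shrink to zero sufficiently fast that $C\cup\bigcup_j A_j$ is already a Peano continuum containing $C$; (ii) at each $x_m$ attach a shrinking wedge $E_m=\bigvee_{k\in\bbn} S^n_{m,k}$ of round $n$-spheres with $\operatorname{diam}(S^n_{m,k})<2^{-m-k}$, embedded in $\ell^2$ pairwise disjoint and meeting the rest of the construction only at the wedge point $x_m$. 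Set $X = C\cup\bigcup_j A_j\cup\bigcup_m E_m$.

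The verification splits into four claims. (a) $X$ is a Peano continuum: the shrinking diameters ensure compactness in $\ell^2$, and stage (i) gives path- and local-path-connectedness. (b) $\wildn(X)=C$: since each sphere $S^n_{m,k}$ is attached to $X$ only at the single point $x_m$, there is a continuous retraction $X\to S^n_{m,k}$ collapsing everything outside the sphere to $x_m$, so $S^n_{m,k}$ represents a nontrivial class in $\pi_n(X,x_m)$; as $\operatorname{diam}(S^n_{m,k})\to 0$ with $k\to\infty$, the point $x_m$ is $\pi_n$-wild, giving $D\subset\wildn(X)$, and by the first part of the theorem $\wildn(X)$ is closed, so $C=\ov{D}\subset\wildn(X)$. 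Conversely, any $p\in X\setminus C$ lies either in the interior of some $A_j$ or in $S^n_{m,k}\setminus\{x_m\}$, both admitting contractible open neighborhoods in $X$, so $p$ is non-wild. (c) $X\setminus C=\bigsqcup_j\operatorname{int}(A_j)\sqcup\bigsqcup_{m,k}(S^n_{m,k}\setminus\{x_m\})$ is a countable disjoint union of open $1$-cells and open $n$-cells. (d) The countable sum theorem for dimension applied to the closed cover of $X$ by $C$, the $A_j$, and the $S^n_{m,k}$ gives $\dim X\leq \max\{\dim C,n\}$ (using $n\geq 1$); the reverse inequality follows from $C\subset X$ and $S^n\hookrightarrow X$.

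The main obstacle is stage (i): arranging the arcs $\{A_j\}$ so that $X$ is locally path-connected at every point of $C$ and not just at the dense subset $D$. This can be handled by a standard Peano-compactification arrangement in which, for each $p\in C$ and each $\epsilon>0$, sufficiently many arcs of diameter less than $\epsilon$ are placed near $p$ so that any two points in a small neighborhood of $p$ can be joined by a small path. The continuity of each retraction used in (b) follows cleanly from the one-point-attachment and the closedness of the sphere in $X$.
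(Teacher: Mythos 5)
Your first half is essentially right for Peano continua, but one step is stated as if it were the definition when it is not: ``$x\notin\wildn(X)$ implies there is a neighborhood $U$ of $x$ such that every based map $(S^n,\ast)\to(U,x)$ is null-homotopic in $X$'' is the negation of the \emph{topological} variant of wildness, whereas the definition only rules out a \emph{shrinking sequence} of essential based spheres at $x$. The implication you need holds because $X$ is first countable (if no such $U$ existed, a countable decreasing neighborhood base would yield essential based spheres converging to $x$), and this should be said explicitly; the paper's Proposition \ref{closedprop} avoids the issue by arguing at a point of $\ov{\wildn(X)}$ directly, path-conjugating essential spheres based at nearby wild points. Your realization follows the same two-stage plan as the paper -- first adjoin a null family of arcs to force the Peano property, then attach shrinking wedges of $n$-spheres along a countable dense subset -- and your verifications of (b), (c), (d) (retractions onto the attached spheres for essentiality, closedness of $\wildn$ to pass from the dense set $D$ to all of $C$, the countable sum theorem for the dimension bound) match the paper's Lemmas \ref{sanlemma} and \ref{peanocontlemma2}. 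The only real difference is that you work with concrete null families in $\ell^2$ where the paper uses an abstract shrinking point-attachment topology and an inverse-limit argument to verify compactness and local path-connectedness.

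The genuine gap is the one you flag yourself: stage (i). ``Place sufficiently many small arcs near each $p\in C$ so that nearby points can be joined by small paths'' is a statement of the goal, not a construction; the difficulty is precisely to choose a single countable null family of arcs that yields local path-connectedness at \emph{every} point of $C$, including points of $C\setminus D$, while meeting $C$ only in endpoints. The paper's device (Lemma \ref{peanocontlemma} and its geometric version, Corollary \ref{peanocontlemmarn}) is to invoke the Hausdorff--Alexandroff theorem to get a continuous surjection $g:\mcc\to C$ from the Cantor set and then adjoin, for each complementary interval $(a,b)$ of $\mcc$ in $\ui$, an arc (in your picture, the line segment in $\ell^2$) from $g(a)$ to $g(b)$. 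The resulting space is the continuous Hausdorff image of $\ui$, hence a Peano continuum by the Hahn--Mazurkiewicz theorem, the complement of $C$ is automatically a countable disjoint union of open $1$-cells, and the diameters of the arcs are controlled by the uniform continuity of $g$. Without some such parametrization, your stage (i) -- which is the crux of the whole construction -- remains an unproved assertion.
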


In Section \ref{section6rigidity}, we extend the established one-dimensional theory by showing that higher $\pi_n$-wild sets are ``rigid" for certain $n$-dimensional spaces. We say that a space $X$ is \textit{$\pi_n$-rigid at} $x\in X$ if there exists a fully essential map $f:(\bbe_n,b_0)\to (X,x)$ that cannot be freely homotoped away from the point $x$, i.e. if every homotopy $F:\bbe_n\times \ui\to X$ extending $f$ by $F(a,0)=f(a)$ must satisfy $F(b_0,1)=x$. We set $\mathbf{rg}_n(X)=\{x\in X\mid X\text{ is }\pi_n\text{-rigid at }x\}$ and say that $X$ is \textit{completely $\pi_n$-rigid} if $X$ is $\pi_n$-rigid at every $\pi_n$-wild point, i.e. if $\rgn(X)=\wildn(X)$. The main utility of this last concept is that if $X$ and $Y$ are homotopy equivalent and both $X$ and $Y$ are completely $\pi_n$-rigid spaces, then $\wildn(X)$ and $\wildn(Y)$ are homeomorphic (Theorem \ref{rigidthm1}). As noted above, it is known that every one-dimensional metric space is completely $\pi_1$-rigid.

In the one-dimensional and planar settings, one can readily distinguish fundamental group elements using the fact that such spaces are $\pi_1$-shape injective, that is, the canonical homomorphism $\pi_1(X)\to \check{\pi}_1(X)$ from the fundamental group to the first shape homotopy group is always injective \cite{CConedim,FZ05}. However, for $n\geq 2$, an $n$-dimensional Peano continuum $X$ need not be $\pi_n$-shape injective (the canonical homomorphism $\pi_n(X)\to \check{\pi}_n(X)$ need not be injective) \cite{EKR,Felt,KRsuspension}. Hence, to identify a higher-dimensional analogue of the result that one-dimensional spaces are completely $\pi_1$-rigid, we restrict to $n$-dimensional, $\pi_n$-shape injective Peano continua. We prove the following in Section \ref{section6rigidity}.

\begin{theorem}\label{thm2}
Let $n\geq 1$. If $X$ is an $n$-dimensional, $\pi_n$-shape injective Peano continuum that can be expressed as an inverse limit of a sequence of compact $n$-dimensional polyhedra $K_n$ with $(n-1)$-connected universal covers, then $X$ is completely $\pi_n$-rigid.
\end{theorem}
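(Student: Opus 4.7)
The plan is to prove the stronger statement that every fully essential map $f:(\bbe_n,b_0) \to (X,x)$ based at a $\pi_n$-wild point $x$ is rigid. Suppose, toward contradiction, that $F:\bbe_n \times \ui \to X$ is a homotopy with $F(\cdot,0)=f$ and $F(b_0,1)=y \ne x$. Let $\gamma(t) = F(b_0,t)$, and by restricting $F$ to the sub-interval beginning with the last time $\gamma$ passes through $x$, assume $\gamma^{-1}(x)=\{0\}$. Write $X=\varprojlim_m K_m$ with bonding maps $p_m:K_{m+1}\to K_m$ and projections $q_m:X\to K_m$, and set $x_m=q_m(x)$, $y_m=q_m(y)$, $\gamma_m = q_m\circ \gamma$, $f_m = q_m\circ f$, $F_m = q_m \circ F$. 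For $m$ large, $x_m \ne y_m$ in $K_m$ and $\gamma_m$ is non-constant.

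For each such $m$, I would first produce a contractible open neighborhood $W_m \subset K_m$ of $\gamma_m$: after approximating $\gamma_m$ by a PL embedded arc $\hat\gamma_m$ (using general position, since $K_m$ is an $n$-dimensional polyhedron), a PL regular neighborhood of $\hat\gamma_m$ collapses onto it and hence is contractible, and can be chosen to contain $\gamma_m$. Uniform continuity of $F_m$, together with the fact that the petals $S^n_k$ shrink to $b_0$ in $\bbe_n$, yields $N_m \in \bbn$ with $F_m(S^n_k \times \ui) \subset W_m$ for all $k \ge N_m$. Since $W_m$ is contractible, the based map $q_m \circ f|_{S^n_k}$ is null-homotopic in $K_m$; that is, $[q_m \circ f|_{S^n_k}]=0$ in $\pi_n(K_m,x_m)$ for every $k \ge N_m$.

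The step I anticipate as the main obstacle is upgrading these per-$m$ vanishings to a single shape-theoretic vanishing $[q_m \circ f|_{S^n_{k_0}}]=0$ valid simultaneously for every $m$ and some fixed index $k_0$, since this would combine with $\pi_n$-shape injectivity to force $[f|_{S^n_{k_0}}]=0$ in $\pi_n(X,x)$, contradicting full essentialness. The route I propose invokes the universal-cover hypothesis: for $n \ge 2$ the space $\bbe_n \times \ui$ is simply connected, so $F_m$ lifts to $\widetilde{F}_m:\bbe_n \times \ui \to \widetilde{K_m}$, and since $\widetilde{K_m}$ is $(n-1)$-connected and $n$-dimensional it has the homotopy type of a wedge of $n$-spheres, so $\pi_n(\widetilde{K_m})=H_n(\widetilde{K_m})$ is free abelian and free and based homotopy classes of maps $S^n \to \widetilde{K_m}$ coincide. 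Choosing the neighborhoods $W_m$ coherently under the bonding maps (so that $p_m(W_{m+1}) \subset W_m$ and the lifts $\widetilde{F}_m$ assemble into a compatible family on the universal covers), one can track the lifted images $\widetilde{F}_m(S^n_k \times \ui)$ as classes in $H_n(\widetilde{K_m})$ and argue that for a single fixed $k_0$ these classes vanish at every level $m$. The case $n=1$ is handled separately and follows from the one-dimensional $\pi_1$-rigidity theory cited in the introduction, since $\bbe_1$ is not simply connected and the universal-cover lift is unavailable.
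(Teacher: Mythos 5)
Your proposal does not close at its central step, and the partial results you do establish are too weak to ever yield a contradiction. The per-level vanishing you derive --- that for each $m$ there is $N_m$ with $[q_m\circ f|_{S^n_k}]=0$ in $\pi_n(K_m)$ for all $k\geq N_m$ --- is true of \emph{every} map $\bbe_n\to X$, rigid or not, simply because polyhedra are locally contractible and all but finitely many petals already map into a small contractible neighborhood of $x_m$; it makes no use of the hypothesis $y\neq x$. For $X=\bbe_n$ itself (which satisfies all hypotheses of the theorem) one has exactly this pattern, with $N_m\to\infty$ while every petal remains essential, so no contradiction can come from this information alone. The contractible tube $W_m$ around $\gamma_m$ therefore buys nothing, and its existence is doubtful anyway: $\gamma_m=q_m\circ\gamma$ is merely a continuous path and may have image all of $K_m$, in which case no contractible open set contains it, and a regular neighborhood of an embedded approximating arc need not contain $\im(\gamma_m)$. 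The step you yourself flag as ``the main obstacle'' --- producing one fixed $k_0$ whose class vanishes at \emph{every} level $m$ --- is the entire content of the theorem, and the sketch via lifts to $\widetilde{K_m}$ and tracking classes in $H_n(\widetilde{K_m})$ contains no argument for the required uniformity in $m$; the quantifiers cannot simply be interchanged.

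The paper's proof reverses the order of quantifiers and exploits the separation of $x$ from $y$ rather than contractibility along the basepoint track. One first fixes a finite level $i_1$ with $q_{i_1}(x)\neq q_{i_1}(y)$ and disjoint neighborhoods $U_0,U_1$ of these images, then chooses a single petal index $J$ so large that the time-$0$ and time-$1$ restrictions of the homotopy on $S^n_J$ land in $U_0$ and $U_1$ respectively; shape injectivity then supplies one deeper level $i_2$ at which \emph{both} of these maps are essential, and they still have disjoint images there. The contradiction comes from Lemma \ref{freelyhomlemma}: in a compact $n$-dimensional polyhedron with $(n-1)$-connected universal cover, two freely homotopic maps $S^n\to P$ with disjoint images must be inessential (for $n\geq 2$ via a Mayer--Vietoris and Hurewicz argument in the universal cover, for $n=1$ via conjugacy in free products). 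This disjoint-image lemma is the key idea missing from your outline; without it, or something equivalent, the argument does not go through.
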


The authors do not know if the hypothesis that $X$ is $\pi_n$-shape injective can be weakened when $n\geq 2$. However, we note in Example \ref{connectednessexample} why the higher connectedness hypothesis on the polyhedra $K_n$ cannot be removed. Finally, since $2$-complexes always have $1$-connected universal covers, we have the following special case of interest.

\begin{corollary}\label{cor2}
If $X$ is a $2$-dimensional, $\pi_2$-shape injective Peano continuum, then $X$ is completely $\pi_2$-rigid. In particular, if $X$ and $Y$ are homotopy equivalent $2$-dimensional, $\pi_2$-shape injective Peano continua, then $\wild_2(X)\cong \wild_2(Y)$.
\end{corollary}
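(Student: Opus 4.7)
The plan is to derive Corollary \ref{cor2} directly from Theorem \ref{thm2} specialized to $n=2$, combined with Theorem \ref{rigidthm1}. Since $X$ is given to be a 2-dimensional, $\pi_2$-shape injective Peano continuum, the only hypothesis of Theorem \ref{thm2} requiring verification is that $X$ can be written as the inverse limit of a sequence of compact 2-dimensional polyhedra whose universal covers are 1-connected.

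For the polyhedral inverse limit, I would invoke the classical theorem of Freudenthal (later systematized by Mardeši\'c--Segal) that every $n$-dimensional compact metric space is the inverse limit of a sequence of compact polyhedra of dimension $n$. Applied to the 2-dimensional Peano continuum $X$, this yields an inverse sequence $(K_i)$ of compact 2-dimensional polyhedra with $X = \ilim K_i$, and by refining the bonding maps and passing to components one may arrange each $K_i$ to be connected.

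The requirement that the universal cover of each $K_i$ be $(n-1)$-connected is then automatic when $n=2$: every connected 2-dimensional CW complex $K_i$ is semilocally simply connected, so it admits a universal cover $\widetilde{K_i}$, which by the defining property of a universal cover is $1$-connected. Since $(n-1) = 1$ in this setting, the connectivity hypothesis of Theorem \ref{thm2} is met, and we may therefore conclude that $X$ is completely $\pi_2$-rigid.

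The second assertion of the corollary is then immediate: since both $X$ and $Y$ satisfy the hypotheses, each is completely $\pi_2$-rigid by the first part, so Theorem \ref{rigidthm1} applies and any homotopy equivalence $X\simeq Y$ produces a homeomorphism $\wild_2(X)\cong \wild_2(Y)$. The only potential point of friction in this argument is the shape-theoretic claim that a $2$-dimensional compact metric space admits a polyhedral resolution by polyhedra of the same dimension, but this is a standard and well-documented result, so the corollary genuinely is a clean consequence of the preceding theorem.
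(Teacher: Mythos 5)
Your argument is correct and is essentially the paper's own (one-line) justification: Corollary \ref{cor2} is Theorem \ref{thm2} specialized to $n=2$, where the $(n-1)$-connectedness hypothesis is automatic because universal covers of connected $2$-dimensional polyhedra are simply connected, and the polyhedral inverse-limit presentation of a $2$-dimensional compactum is the standard Freudenthal/Marde\v{s}i\'c--Segal fact you cite. The second assertion then follows from Theorem \ref{rigidthm1} exactly as you describe.
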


\section{The $\pi_n$-wild set of a space}\label{section2wildset}

Unless otherwise stated, all topological spaces are assumed to be Hausdorff and a ``map" is a continuous function. Throughout, $S^n$ will be the unit $n$-sphere with basepoint $s_0=(1,0,\dots,0)$. A map $f:S^n\to X$ is said to be \textit{inessential} if it is null-homotopic and \textit{essential} otherwise.


When $X$ and $Y$ are spaces, $Y^X$ will denote the space of continuous functions $X\to Y$ with the compact-open topology. If $A\subseteq X$ and $B\subseteq Y$, then $(Y,B)^{(X,A)}$ denotes the subspace of $Y^X$ consisting of relative maps $(X,A)\to (Y,B)$. When $y\in Y$, $c_y:X\to Y$ will denote the constant map at $y$. For a based topological space $(X,x_0)$, we write $\Omega^{n}(X,x_0)$ to denote the $n$-loop space $(X,x_0)^{(S^n,s_0)}$ and $\pi_n(X,x_0)=\{[f]\mid f\in \Omega^n(X,x_0)\}$ to denote the $n$-th homotopy group. When the basepoint is clear from context, we may simplify this notation to $\Omega^n(X)$ and $\pi_n(X)$. We say that a sequence $\{f_k\}_{k\in \bbn}$ of maps $f_k:X\to Y$ \textit{converges to} $y\in Y$ if $\{f_k\}_{k\in\bbn}\to c_y$ in $Y^X$, that is, if for every neighborhood $U$ of $y$, there exists $K\in\bbn$ such that $\im(f_k)\subseteq U$ for all $k\geq K$.

\begin{definition}
The \textit{shrinking wedge} of countable set $\{(A_j,a_j)\}_{j\in J}$ of based spaces is the space $\sw_{j\in J}(A_j,a_j)$ whose underlying set is the usual one-point union $\bigvee_{j\in J}(A_j,a_j)$ with canonical basepoint $b_0$ and where $A_j$ is identified canonically as a subset. A set $U$ is open in $\sw_{j\in J}A_j$ if
\begin{itemize}
\item $U\cap A_j$ is open in $A_j$ for all $j\in J$,
\item and whenever $b_0\in U$, we have $A_j\subseteq U$ for all but finitely many $j\in J$.
\end{itemize}
When the basepoints and/or indexing set are clear from context, we may write the shrinking wedge as $\sw_{J}A_j$. 
\end{definition}

The \textit{$n$-dimensional infinite earring space} is the shrinking wedge $\bbe_n=\sw_{j\in\bbn}S^n$ of $n$-spheres. We identify $\bbe_0=\sw_{\bbn}(S^0,1)$ with the space $\{1,1/2,1/3,\dots,0\}$ consisting of a single convergent sequence and basepoint $0$. Let $\ell_j:S^n\to \bbe_n$ denote the inclusion of the $j$-th sphere. When $n\geq 2$, it is known that $\bbe_n$ is $(n-1)$-connected, locally $(n-1)$-connected and that the canonical map $\Psi_n:\pi_n(\bbe_n)\to \check{\pi}_{n}(X)\cong\prod_{j\in\bbn}\bbz$ to the $n$-th shape homotopy group is an isomorphism \cite{EK00higher}.

\begin{definition}
For a map $f\in (X,x)^{(\bbe_n,b_0)}$, we will refer to $f_j=f\circ \ell_j$ as the \textit{$j$-th restriction} of $f$. We say that a map $f:\bbe_n\to X$ is \textit{fully essential} if the $j$-th restriction $f_j:S^n\to X$ is essential for all $j\in\bbn$.
\end{definition}

\begin{remark}\label{mappingspaceremark}
Exponential laws for spaces imply that for any based space $(Y,y)$, there is a canonical bijection $(Y,y)^{(\bbe_n,b_0)}\cong (\Omega^n(Y,y),c_{y})^{(\bbe_0,0)}$ given by $f\mapsto \{f_j\}_{j\in\bbn}$. In other words, maps $\bbe_n\to Y$ based at $y$ are in bijective correspondence with sequences of based maps $S^n\to Y$ that converge to $y$.
\end{remark}

\begin{definition}\label{wildsetdef}
A point $x\in X$ is a \textit{$\pi_n$-wild point} if there exists a fully essential map $f:(\bbe_n,b_0)\to (X,x)$. The \textit{$\pi_n$-wild set} of $X$ is the subspace $\wildn(X)$ of $X$ consisting of all $\pi_n$-wild points of $X$.
\end{definition}

\begin{remark}
There are other variations of wild sets that may be preferable depending on the context. 
\begin{enumerate}
\item A point $x\in X$ is a sequential-based $\pi_n$-wild point if there exists a sequence of essential based maps $\alpha_n:(S^n,s_0)\to (X,x)$ that converge to $x$,
\item A point $x\in X$ is a sequential-free $\pi_n$-wild point if there exists a sequence of essential maps $\alpha_n:S^n\to X$ that converge to $x$ (but which are not necessarily based at $x$),
\item A point $x\in X$ is a topological-based $\pi_n$-wild point if for every neighborhood $U$ of $x$, the homomorphism $\pi_n(U,x)\to \pi_n(X,x)$ induced by the inclusion map is non-trivial,
\item A point $x\in X$ is a topological-free $\pi_n$-wild point if for every neighborhood $U$, there exists a map $\alpha:S^n\to U$ that is essential in $X$.
\end{enumerate}
Variation (1) is equivalent to Definition \ref{wildsetdef} and is our preferred definition. In general, all four variations of $\pi_n$-wild sets are distinct. When $X$ is first countable, we have equivalences (1) $\Leftrightarrow$ (3) and (2) $\Leftrightarrow$ (4). When $X$ is locally path-connected, we have equivalence (3) $\Leftrightarrow$ (4). Other notions of wildness defined in terms of (co)homology groups may also be defined. We choose to focus on Variation (1) since it is most directly related to infinite-product algebra in the $n$-th homotopy group. For instance, if $\omega_1+1=\omega_1\cup\{\omega_1\}$ is the first compact uncountable ordinal with basepoint $\omega_1$, then the basepoint of the $n$-th reduced suspension $\Sigma^{n}(\omega_1+1)$, $n\geq 2$ satisfies (1) but not (3). This is reflected in the fact that $\pi_n(\Sigma^{n}(\omega_1+1))$ is completely tame. In fact, one can show it is free abelian and admits no non-trivial infinite sums.
\end{remark}

\begin{example}\label{emptyexample}
If $X$ is locally contractible at $x\in X$, then $x\notin\wildn(X)$. Hence, if $X$ is a locally contractible space, e.g. if $X$ is a CW-complex or manifold, then $\wildn(X)=\emptyset$.
\end{example}

\begin{example}\label{earringexample}
If $m\geq n$ and $0\neq [g]\in \pi_m(S^n)$, then we can define a fully essential map $f:\bbe_m\to \bbe_n$, which maps the $j$-th sphere of $\bbe_m$ to the $j$-th sphere of $\bbe_n$ by the map $g$. Hence, $\wild_{m}(\bbe_n)=\{b_0\}$ whenever $\pi_m(S^n)\neq 0$. For instance, this occurs when $n\in\{2,3,4,5\}$ since is known that $\pi_m(S^n)\neq 0$ for all $m\geq n$ \cite{CurtisEhomotopy,pi2,Mahowald,Mori}.
\end{example}

\begin{remark}[Cardinality]
The existence of a $\pi_n$-wild point in a path-connected compact metric space $X$ directly effects the cardinality of $\pi_n(X,x_0)$. It is proved in \cite{Pawlikowski} that if $X$ is a path-connected compact metric space and there exists a fully essential map $f:\bbe_1\to X$, then the image of the induced homomorphism $f_{\#}:\pi_1(\bbe_1,b_0)\to\pi_1(X,f(b_0))$ is uncountable (note that Pawlikowski's proof provides an alternative to Shelah's forcing proof in \cite{Shelah}). Pawlikowski's argument is modified to apply to higher homotopy groups in \cite{corsonthesis}. Hence, if $X$ is a path-connected compact metric space $X$ and $\wildn(X)\neq \emptyset$, then $\pi_n(X,x_0)$ is uncountable. We point out in Example \ref{uncountableexample} that it is possible for the n-th homotopy group of an $n$-dimensional Peano continuum to be uncountable even if it has no $\pi_n$-wild points. However, Corson also shows in \cite{corsonthesis} that a partial converse holds under a higher connectedness hypothesis.
\end{remark}

\begin{proposition}\label{closedprop} 
If $X$ is first countable and locally path-connected, then $\wildn(X)$ is closed in $X$.
\end{proposition}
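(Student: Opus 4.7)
The plan is to prove that the complement $X \setminus \wildn(X)$ is open by a contradiction argument that builds a fully essential map at any limit point of $\wildn(X)$. Suppose $x \in X \setminus \wildn(X)$ and, for contradiction, that every neighborhood of $x$ meets $\wildn(X)$. Using first countability together with local path connectedness, I would first produce a countable decreasing neighborhood basis $\{U_m\}_{m \in \bbn}$ of $x$ consisting of open path-connected sets. For each $m$, choose a $\pi_n$-wild point $x_m \in U_m$.

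Next, I would exploit the definition of $\pi_n$-wildness at each $x_m$ together with Remark \ref{mappingspaceremark}: the sequence of essential based maps $(S^n,s_0) \to (X,x_m)$ that converges to $x_m$ must eventually have image inside $U_m$, so I can pick a single essential based map $\beta_m \colon (S^n,s_0) \to (X,x_m)$ with $\im(\beta_m) \subseteq U_m$. Then, using path-connectedness of $U_m$, choose a path $\gamma_m \colon [0,1] \to U_m$ from $x$ to $x_m$, and apply the standard basepoint-change construction (viewing $S^n$ as the quotient of $[0,1]^n$ with boundary collapsed, traversing $\gamma_m$ on an outer collar and $\beta_m$ on the inner box) to obtain a based map $\tilde{\beta}_m \colon (S^n,s_0) \to (X,x)$ with $\im(\tilde{\beta}_m) \subseteq \im(\beta_m) \cup \im(\gamma_m) \subseteq U_m$. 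Because basepoint change along $\gamma_m$ induces an isomorphism $\pi_n(X,x_m) \to \pi_n(X,x)$, the class $[\tilde{\beta}_m]$ is nonzero, so $\tilde{\beta}_m$ is essential.

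Finally, since $\{U_m\}$ is a neighborhood basis at $x$, the containment $\im(\tilde{\beta}_m) \subseteq U_m$ forces $\tilde{\beta}_m \to x$ in the compact-open topology. By Remark \ref{mappingspaceremark}, the sequence $\{\tilde{\beta}_m\}$ assembles into a fully essential based map $(\bbe_n,b_0) \to (X,x)$, witnessing $x \in \wildn(X)$ and contradicting our choice of $x$.

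The main obstacle I expect is the basepoint-change step, and in particular making sure the new map $\tilde{\beta}_m$ simultaneously stays essential \emph{and} keeps its image confined to $U_m$. This is exactly why local path connectedness is needed: without it one cannot keep the connecting path $\gamma_m$ inside $U_m$, so the images of the $\tilde{\beta}_m$ might fail to shrink to $x$. Both hypotheses (first countability and local path-connectedness) feed directly into this single pinch point, and once the path-connected countable basis is in hand the rest of the argument is essentially bookkeeping.
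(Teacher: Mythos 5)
Your proposal is correct and follows essentially the same route as the paper: take a decreasing path-connected neighborhood base at $x$, pick a wild point in each, extract one essential based $n$-sphere with image in that neighborhood, and path-conjugate inside the neighborhood to rebase at $x$, yielding a shrinking sequence of essential based maps. The only cosmetic difference is that you phrase it as a contradiction while the paper argues directly that any point of $\ov{\wildn(X)}$ is wild.
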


\begin{proof}
Suppose $X$ is first countable and locally path-connected and that $x\in \ov{\wildn(X)}$. Let $U_1\supseteq U_2\supseteq U_3\supseteq \cdots$ be a neighborhood base at $x$ of path-connected sets and pick points $x_k\in \wildn(X)\cap U_k$ for each $k\geq 1$. For each $k$, find a fully essential map $f_k:(\bbe_n,b_0)\to (X,x_k)$ with $j$-th restriction $f_{k,j}=f_k\circ \ell_j:S^n\to X$. For each $k$, find $J_k$ such that $\im(f_{k,j})\subseteq U_k$ for all $j\geq J_k$. Let $\alpha_k:\ui\to U_k$ be a path from $x$ to $x_k$ and let $g_k:S^n\to U_k$ be the map based at $x$, which is the path-conjugate of $f_{k,J_k}$ by the path $\alpha_k$. Now $\{g_k\}_{k\in\bbn}$ is a sequence of essential based maps $g_k:(S^n,s_0)\to (X,x)$, which converges to $x$. Thus $x\in\wild_n(X)$, proving that $\wildn(X)=\ov{\wildn(X)}$.
\end{proof}

\begin{corollary}\label{closedcor}
If $X$ is a Peano continuum, then $\wildn(X)$ is a compact metrizable space.
\end{corollary}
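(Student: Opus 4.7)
The proof should be a short corollary-style deduction from Proposition \ref{closedprop}, so the plan is essentially a one-paragraph chain of standard observations.

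My plan is first to recall that a Peano continuum $X$ is, by definition, a compact connected locally connected metrizable space, hence in particular first countable (metric spaces are first countable) and locally path-connected (for metric continua, local connectedness is equivalent to local path-connectedness via the classical result that components of open sets in a locally connected metric space coincide with path components). Once these two hypotheses are in hand, Proposition \ref{closedprop} applies directly and yields that $\wildn(X)$ is a closed subset of $X$.

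Then I would finish by invoking the fact that a closed subspace of a compact metrizable space is itself compact and metrizable (compactness by the standard closed-in-compact argument, metrizability because the subspace topology of a metric space is metric). Combining the two conclusions gives that $\wildn(X)$ is a compact metric space, as required.

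There is no real obstacle here; the only point that might merit a sentence of justification is the passage from ``locally connected'' to ``locally path-connected'' for Peano continua, which one could either cite as a standard fact about metric continua or simply absorb into the definition of Peano continuum (many authors build local path-connectedness directly into the definition via Hahn--Mazurkiewicz). Everything else is immediate from Proposition \ref{closedprop} and general point-set topology.
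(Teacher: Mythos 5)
Your proof is correct and is exactly the deduction the paper intends (the paper leaves the corollary unproved as an immediate consequence of Proposition \ref{closedprop}): a Peano continuum is first countable and locally path-connected, so $\wildn(X)$ is closed, and a closed subspace of a compact metric space is compact and metrizable. The one point you flag --- that local connectedness implies local path-connectedness for compact connected metric spaces --- is indeed the only nontrivial ingredient and is standard.
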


In the next two examples, we illustrate that the lack of either hypothesis in Proposition \ref{closedprop} (first countability or local path connectivity) can lead to $\wildn(X)$ failing to be closed in $X$.

\begin{example}[Lack of first countability]\label{nonclosedexample}
Let $\{A_k\}_{k\in\bbn}$ be a sequence of homeomorphic copies of $\bbe_n$ with canonical basepoint $a_k\in A_k$. Let $X=(\ui\sqcup\coprod_{k\geq 1}A_k)/\mathord{\sim}$ be the quotient space obtained by attaching $A_k$ to $\ui$ by $a_k\sim \frac{1}{k}$ (see Figure \ref{fig1} in the case $n=1$). Since $X$ has the weak topology with respect to the subspaces $\ui$ and $A_k$, $k\geq 1$, $X$ is locally path-connected at $0$ but is not first countable at $0$. In particular, any compact set, e.g. the image of a map $\bbe_n\to X$, must have image in a subspace $Y$ of $X$, which is the union of $\ui$ and finitely many $A_k$. But any such subspace $Y$ is locally contractible at $0$. Thus $0$ is not a $\pi_n$-wild point of $X$ and we have that $\wildn(X)=\{1/k\mid k\in\bbn\}$ is not closed in $X$. 
\end{example}

\begin{figure}[H]
\centering \includegraphics[height=1.1in]{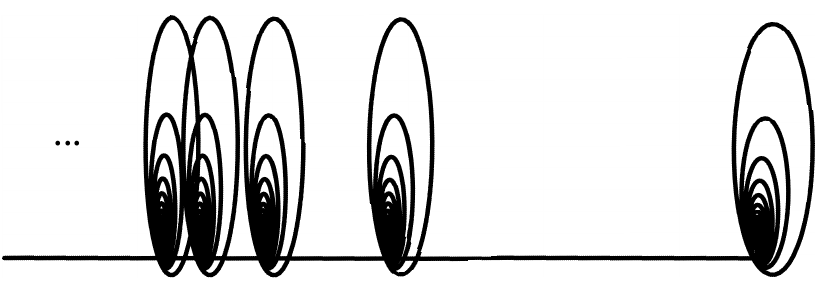}
\caption{\label{fig1} A space obtained by attaching copies of $\bbe_1$ to $\ui$ along the points $1/k$ (in the weak topology).}
\end{figure}

\begin{example}[Lack of local path connectivity]\label{sinecurveexample}
Let $T\subseteq W\subseteq \bbr^2$ where $T$ is the closed topologist sine curve and $W$ is a Warsaw circle containing $T$. Let $A=\{a_1,a_2,a_3,\dots\}$ be a countable dense subset of the non-compact path-component $P_1$ of $T$. Let $X$ be the space obtained by attaching an $n$-sphere of radius $1/k$ at the point $a_k\in P_1$ (see Figure \ref{fig3} in the case $n=1$). Although $X$ is a path-connected compact metric space, it is not locally path-connected at any point in the compact path-component $P_2$ of $T$. Note that $\wildn(X)=P_1$, which is not closed in $X$. In particular, one cannot form a fully essential map $f:\bbe_n\to X$ based at a point of $P_2$ because there are no small paths between $P_1$ and $P_2$ that one can use to form a \textit{shrinking} sequence of path-conjugates (as in the proof of Proposition \ref{closedprop}).
\end{example}

\begin{figure}[H]
\centering \includegraphics[height=2in]{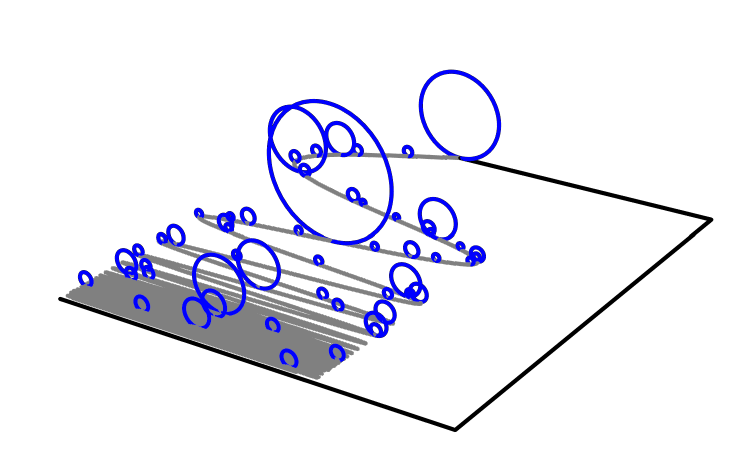}
\caption{\label{fig3}The Warsaw circle in the $xy$-plane with a sequence of circles of shrinking radius attached along a dense subset of the non-compact path component of the topologist's sine curve (illustrated in gray).}
\end{figure}

When dealing with subspaces of real Euclidean space we note the following consequence of dimension theory. When referring to topological dimension $\dim(X)$ of a space $X$ we mean ``Lebesgue covering dimension" (this agrees with small and large inductive dimension when $X$ is a separable metric space).

\begin{proposition}
If $m\geq 2$, $n\geq 0$, and $X\subseteq \bbr^m$, then $\dim(\wildn(X))\leq m-1$.
\end{proposition}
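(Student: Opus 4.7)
The plan is to combine a classical characterization from dimension theory with the elementary observation that wild points cannot have Euclidean neighborhoods inside $X$ (Example \ref{emptyexample}). By a classical theorem of Hurewicz and Wallman, a subspace $A\subseteq \bbr^m$ satisfies $\dim(A)\leq m-1$ if and only if $A$ contains no non-empty open subset of $\bbr^m$. Thus the problem reduces to showing that $\wildn(X)$ has empty interior in $\bbr^m$.

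To establish this, I would argue by contradiction. Suppose some non-empty open subset $U$ of $\bbr^m$ is contained in $\wildn(X)$. Then $U\subseteq X$, and $U=U\cap X$ is open in the subspace topology on $X$. Fix any $x\in U$ and choose a Euclidean open ball $V\subseteq U$ centered at $x$. Then $V$ is convex, hence contractible, and $V$ is open in $X$, so $V$ is a contractible open neighborhood of $x$ in $X$. This shows $X$ is locally contractible at $x$, and by Example \ref{emptyexample} we conclude $x\notin \wildn(X)$, contradicting $x\in U\subseteq \wildn(X)$.

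The only non-routine ingredient is the Hurewicz-Wallman characterization invoked at the start; once that is in hand, the rest is a direct unwinding of definitions, relying on the fact that any open Euclidean ball sitting inside $\wildn(X)\subseteq X$ furnishes open contractible neighborhoods in $X$ at each of its points. The hypothesis $m\geq 2$ plays no role in this outline — the argument applies unchanged for $m=1$ — and $n\geq 0$ is simply the standing convention on the index.
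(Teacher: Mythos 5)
Your argument is correct and is essentially the paper's own proof: both reduce the claim to showing that $\wildn(X)$ has empty interior in $\bbr^m$ (the Hurewicz--Wallman characterization you cite is the same dimension-theoretic fact the paper quotes from Engelking) and both derive the empty interior from the contractibility of a small Euclidean ball, your route via Example \ref{emptyexample} being just a repackaging of the paper's inline observation that a fully essential map must eventually send a sphere into that ball. Your closing remark that $m\geq 2$ is not needed is also accurate.
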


\begin{proof}
We first check that the interior $\int(\wildn(X))$ of $\wildn(X)$ in $\bbr^m$ is empty. If $x\in \int(\wildn(X))$, then there exists an open Euclidean $\epsilon$-ball $U$ such that $x\in U\subseteq \int(\wildn(X))\subseteq X$ and a fully essential map $f:(\bbe_n,b_0)\to (X,x)$. Since $U$ is open there exists $j$ sufficently large so that $f$ maps the $j$-th sphere of $\bbe_n$ into $U$. However, $U$ is contractible and so the $j$-th restriction $f_j:S^n\to X$ is null-homotopic in $X$, which is a contradiction. We conclude that $\int(\wildn(X))=\emptyset$. It is a well-known result of dimension theory \cite[1.8.10]{EngelkingDimThry} that if $M\subseteq \bbr^m$ has topological dimension $\dim(M)=m$, then the interior $\int(M)$ of $M$ in $\bbr^m$ is non-empty. Since $\int(\wildn(X))=\emptyset$, we must have $\dim(\wildn(X))\leq m-1$.
\end{proof}

For example, the $\pi_1$-wild set of a planar set must be $1$-dimensional (planar sets are aspherical so no higher wild sets are non-empty \cite{CCZaspherical}) and the $\pi_n$-wild set of a subset of Euclidean $3$-space can have dimension at most $2$ for all $n\geq 1$.

\begin{definition}
We say that a space $X$ is \textit{perfectly $\pi_n$-wild} if $\wild_n(X)=X$.
\end{definition}

To provide a simple first example of a perfectly $\pi_n$-wild space, we consider a higher-dimensional analogue of the Sierpinski Carpet construction.

\begin{example}\label{cubeexample}
Let $n\geq 0$ and $Q_0=[0,1]^{n+1}$ be the unit $(n+1)$-cube. If $Q_m$ is defined, we let $Q_{m+1}$ be the set of all $(x_1,x_2,\dots,x_n)\in \bbr^n$ such that there exist $(a_1,a_2,\dots,a_n)\in \{0,1,2\}^n$ such that $(3x_i-a_i)_{i}\in Q_m$ and such that not all $a_i$ are equal to $1$. Let $Q_{\infty}=\bigcap_{m\geq 0}Q_m$. 

If $n=0$, then $Q_{\infty}$ is the ternary Cantor set and if $n=1$, then $Q_{\infty}$ is the Sierpinski Carpet. If $n=2$, then $Q_{\infty}$ is not the Menger cube but rather a Peano continuum more analogous to the Sierpinski carpet where one removes the interior of the central $n$-cube $[1/3,2/3]^3$ from $[0,1]^3$ and then recursively removes the interior of the analogous ternary-central $3$-cube from each of the $26$ remaining $3$-cubes that share a face with $[1/3,2/3]^3$ (see Figure \ref{fig4}). In general, $Q_{\infty}$ is an $n$-dimensional Peano continuum such that $[0,1]^{n+1}\backslash Q_{\infty}$ is a disjoint union of countably many open $(n+1)$-cubes (of null diameter). For each connected component $C$ of $[0,1]^{n+1}\backslash Q_{\infty}$, which is an open $(n+1)$-cube, $\partial C$ is a retract of $Q_{\infty}$ and so a given homeomorphism $S^{n}\to \partial C$ is essential in $Q_{\infty}$. Moreover, for any $x\in Q_{\infty}$ and path-connected open neighborhood $U$ of $x$ in $Q_{\infty}$, there is some connected component $C$ of $[0,1]^{n+1}\backslash Q_{\infty}$ such that $\partial C\subseteq U$. It follows that $x\in\wildn(Q_{\infty})$. Thus $Q_{\infty}$ is perfectly $\pi_n$-wild.
\end{example}

\begin{figure}[H]
\centering \includegraphics[height=2in]{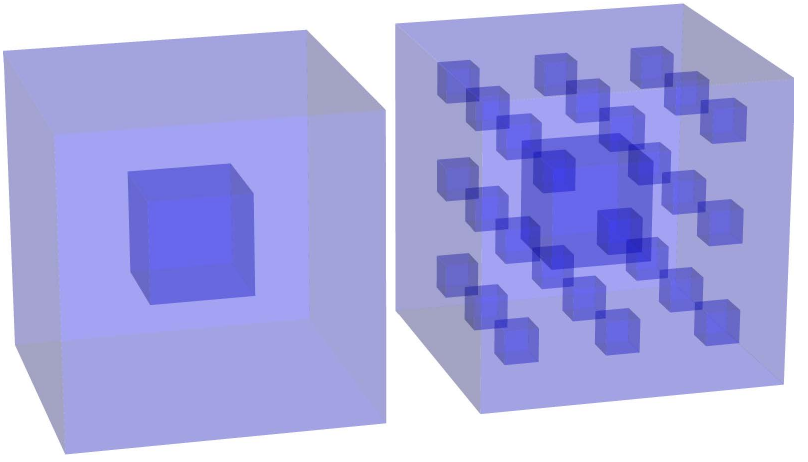}
\caption{\label{fig4}The stages $Q_1$ and $Q_2$ in the construction of the $2$-dimensional case of the Peano continuum $Q_{\infty}$.}
\end{figure}

In dimension $n=2$, the space $Q_{\infty}$ from Example \ref{cubeexample} has the property that every point $x\in Q_{\infty}$ is an accumulation point of subspaces $C_1,C_2,C_3,\dots$, which are homeomorphic to $S^2$ and each of which is a retract of $Q_{\infty}$. Since $Q_{\infty}$ is a Peano continuum and $\pi_k(C_j)\neq 0$ for all $k\geq 2$, we have the following theorem as an observation.

\begin{theorem}\label{perfectlywildthm}
There exists a $2$-dimensional Peano continuum in $\bbr^3$ that is perfectly $\pi_k$-wild for all $k\geq 2$.
\end{theorem}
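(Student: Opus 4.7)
The plan is to promote the observation just stated—that every point of $Q_{\infty}$ (in the case $n=2$) accumulates to a sequence of retract $2$-spheres—into a shrinking sequence of essential based $k$-spheres for every $k\geq 2$, using local path-connectivity of $Q_{\infty}$ together with the fact, recalled in Example \ref{earringexample}, that $\pi_{k}(S^{2})\neq 0$ for all $k\geq 2$.

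First I would fix $k\geq 2$ and a point $x\in Q_{\infty}$, and use the structure recalled in Example \ref{cubeexample}: the complement $[0,1]^{3}\setminus Q_{\infty}$ is a countable union of open $3$-cubes, the boundary $2$-sphere $\partial C$ of each such cube $C$ is a retract of $Q_{\infty}$ (via the canonical ``radial'' retraction onto the face pattern of its parent cube), and for every path-connected open neighborhood $U$ of $x$ in $Q_{\infty}$ we can find some such $\partial C\subseteq U$. Iterating with a neighborhood basis of $x$ gives a sequence of pairwise disjoint subspaces $C_{j}\cong S^{2}$, each a retract of $Q_{\infty}$ with retraction $r_{j}:Q_{\infty}\to C_{j}$, such that $C_{j}\to x$ in the sense that any neighborhood of $x$ contains all but finitely many $C_{j}$.

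Next, since $\pi_{k}(S^{2})\neq 0$, choose an essential map $g_{j}:S^{k}\to C_{j}$ for each $j$. If $g_{j}$ were null-homotopic in $Q_{\infty}$ via $H_{j}:S^{k}\times\ui\to Q_{\infty}$, then $r_{j}\circ H_{j}$ would null-homotope $g_{j}=r_{j}\circ g_{j}$ inside $C_{j}$, contradicting essentiality. So each $g_{j}$ is essential in $Q_{\infty}$. Now use that $Q_{\infty}$ is a Peano continuum: pick a neighborhood base $U_{1}\supseteq U_{2}\supseteq\cdots$ of $x$ consisting of path-connected open sets with $\mathrm{diam}(U_{j})\to 0$, and, after passing to a subsequence of the $C_{j}$, arrange that $C_{j}\subseteq U_{j}$. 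Choose a path $\gamma_{j}:\ui\to U_{j}$ from $x$ to $g_{j}(s_{0})$, and let $\alpha_{j}:(S^{k},s_{0})\to (Q_{\infty},x)$ be the path-conjugate of $g_{j}$ by $\gamma_{j}$. Each $\alpha_{j}$ is based at $x$, remains essential in $Q_{\infty}$ (free homotopy class agrees with that of $g_{j}$, and $Q_{\infty}$ is path-connected), and has image contained in $U_{j}$, so the sequence $\{\alpha_{j}\}$ converges to $x$. By Remark \ref{mappingspaceremark} and Definition \ref{wildsetdef}, this sequence assembles into a fully essential map $(\bbe_{k},b_{0})\to (Q_{\infty},x)$, so $x\in\wild_{k}(Q_{\infty})$. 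Since $k\geq 2$ and $x\in Q_{\infty}$ were arbitrary, $Q_{\infty}$ is perfectly $\pi_{k}$-wild for every $k\geq 2$. Together with the facts from Example \ref{cubeexample} that $Q_{\infty}\subseteq\bbr^{3}$ is a $2$-dimensional Peano continuum, this proves the theorem.

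The only mild obstacle is bookkeeping: one must ensure simultaneously that the $C_{j}$ are disjoint from one another (or at least that each nested neighborhood $U_{j}$ contains a fresh $C_{j}$), that $g_{j}$ is truly essential after passing from $C_{j}\cong S^{2}$ into $Q_{\infty}$, and that the path-conjugation $\gamma_{j}*g_{j}*\gamma_{j}^{-1}$ keeps the image inside $U_{j}$ so that the resulting sequence shrinks. All of these are handled by combining the retraction $r_{j}$ with the local path-connectivity of the Peano continuum $Q_{\infty}$, exactly as in the proof of Proposition \ref{closedprop}.
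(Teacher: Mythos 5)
Your proposal is correct and follows the same route as the paper: the paper records this theorem as an immediate observation from Example \ref{cubeexample} (every point of $Q_{\infty}$ accumulates retract $2$-spheres $C_j$ with $\pi_k(C_j)\neq 0$ for all $k\geq 2$), and your write-up simply supplies the details of that observation --- essentiality via the retractions $r_j$, and the shrinking path-conjugation argument exactly as in Proposition \ref{closedprop}. Nothing in your argument diverges from the paper's intent, and the bookkeeping points you flag are all handled correctly.
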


\section{Basic Properties of $\pi_n$-wild sets}\label{section3properties}

Here, we relate the $\pi_n$-wild set operation to basic operations such as locally path-connected coreflections, coproducts, and direct products. Recall from Example \ref{sinecurveexample} that $\wildn(X)$ need not be closed in $X$ if $X$ is not locally path-connected. The next construction is a well-known method of refining the topology of a space to obtain a locally path-connected space without changing the weak homotopy type (or underlying wild set) of the space. 

\begin{definition}
If $X$ is a space, then the \textit{locally path-connected coreflection} of $X$ is the space $\lpc(X)$ with the same underlying set as $X$ but with the topology generated by the path components of open sets in $X$.
\end{definition}

The space $\lpc(X)$ is locally path-connected. Moreover, identity function $\lpc(X)\to X$ is continuous and universal in the sense that if $f:Z\to X$ is a map from a locally path-connected space $Z$, then $f:Z\to \lpc(Z)$ is also continuous \cite[Theorem 2.2]{BDLM08}. It follows that the identity function $\lpc(X)\to X$ is a weak homotopy equivalence.

\begin{proposition}\label{lpcwildconnectionprop}
For any space $X$ and $n\geq 1$, the identity function $\wildn(\lpc(X))\to\wildn(X)$ is continuous and we have $\lpc(\wildn(\lpc(X)))=\lpc(\wildn(X))$ as spaces.
\end{proposition}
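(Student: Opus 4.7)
The plan is to first identify the underlying sets of the two wild sets, then to compare their topologies once the locally path-connected coreflection is applied. The key input is that for $n \geq 1$ the earring $\bbe_n$ is a Peano continuum (hence locally path-connected) and the product $S^n \times \ui$ is also locally path-connected. By the universal property of $\lpc$ recorded just before the proposition, any continuous $f : \bbe_n \to X$ is simultaneously continuous as a map $\bbe_n \to \lpc(X)$, and any null-homotopy $H : S^n \times \ui \to X$ of a restriction $f_j$ is simultaneously continuous as $H : S^n \times \ui \to \lpc(X)$. Consequently, a map $f : (\bbe_n, b_0) \to (X, x)$ is fully essential in $X$ if and only if it is fully essential in $\lpc(X)$, so $\wildn(X) = \wildn(\lpc(X))$ as sets.

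Granted this set-level identification, the continuity of the identity $\wildn(\lpc(X)) \to \wildn(X)$ is immediate: these are subspaces of $\lpc(X)$ and $X$ respectively, and the identity $\lpc(X) \to X$ is continuous by construction, so its restriction to the common underlying set is continuous.

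For the equality of coreflections, I will write $A_1 = \wildn(X)$ with the subspace topology from $X$ and $A_2 = \wildn(\lpc(X))$ with the subspace topology from $\lpc(X)$, noting that these carry a common underlying set. Functoriality of $\lpc$ applied to the continuous identity $A_2 \to A_1$ yields a continuous identity $\lpc(A_2) \to \lpc(A_1)$. For the reverse direction, I compose the continuous $\lpc(A_1) \to A_1 \hookrightarrow X$; since $\lpc(A_1)$ is locally path-connected, the universal property of $\lpc$ lifts this to a continuous map $\lpc(A_1) \to \lpc(X)$, which factors set-theoretically through $A_2$ and hence gives a continuous identity $\lpc(A_1) \to A_2$; one further application of the universal property lifts this to a continuous identity $\lpc(A_1) \to \lpc(A_2)$. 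Two continuous identity maps between spaces on a common underlying set assemble to a homeomorphism, so $\lpc(A_1) = \lpc(A_2)$ as spaces.

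I do not foresee any substantial obstacle; the argument is essentially bookkeeping with the universal property of $\lpc$ and its functoriality. The one conceptual point worth stressing is that local path-connectedness of $\bbe_n$ for $n \geq 1$ is what enables the set-level identification $\wildn(X) = \wildn(\lpc(X))$, and this step would fail for the analogous statement with $n = 0$ since $\bbe_0$ is not locally path-connected at its basepoint.
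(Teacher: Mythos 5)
Your proposal is correct and follows essentially the same route as the paper: the set-level identification $\wildn(X)=\wildn(\lpc(X))$ via the universal property of $\lpc$ applied to the locally path-connected domains $\bbe_n$ and the null-homotopy domain (the paper uses the closed $(n+1)$-disk where you use $S^n\times\ui$; either works), followed by the same two-way continuity bookkeeping for the coreflected topologies. Your closing remark about why the argument requires $n\geq 1$ is a correct and worthwhile observation, but otherwise there is nothing to add.
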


\begin{proof}
Using the universal property of $\lpc(X)$ and the fact that $\bbe_n$, $S^n$, and the closed unit $(n+1)$-disk are locally path-connected, it is straightforward to show that a function $f:\bbe_n\to X$ is continuous (and fully essential) if and only if $f:\bbe_n\to\lpc(X)$ is continuous (and fully essential). Thus $\wildn(\lpc(X))$ and $\wildn(X)$ are equal as subsets of $X$. Since we know the sets $\wildn(\lpc(X))$ and $\wildn(X)$ are equal, the continuous identity function $\lpc(X)\to X$ restricts to the continuous identity function $\wildn(\lpc(X))\to\wildn(X)$.

For the second statement, apply the functor $\lpc$ to the continuous identity map $\wildn(\lpc(X))\to\wildn(X)$ from the first statement to see that the identity function $\lpc(\wildn(\lpc(X)))\to\lpc(\wildn(X))$ is continuous. The inclusion $i:\wildn(X)\to X$ induces a continuous injection $i:\lpc(\wildn(X))\to \lpc(X)$ and we know the image of this map is $\wildn(\lpc(X))$. Hence, the identity function $\lpc(\wildn(X))\to \wildn(\lpc(X))$ is continuous. Applying $\lpc$ to this map gives that the identity function $\lpc(\wildn(X))\to \lpc(\wildn(\lpc(X)))$ is also continuous. We conclude that the identity function $\lpc(\wildn(\lpc(X)))\to\lpc(\wildn(X))$ is a homeomorphism.
\end{proof}

\begin{corollary}
The $\pi_n$-wild sets of $X$ and $\lpc(X)$ are weakly homotopy equivalent by a bijection. Moreover, if $X$ is first countable, then $\wildn(\lpc(X))$ is closed in $\lpc(X)$.
\end{corollary}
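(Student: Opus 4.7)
The plan is to deduce both statements from Proposition \ref{lpcwildconnectionprop} together with the fact noted earlier that the canonical identity $\lpc(Y)\to Y$ is a weak homotopy equivalence for every space $Y$.

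For the first statement, Proposition \ref{lpcwildconnectionprop} already supplies a continuous identity function $i\colon \wildn(\lpc(X))\to \wildn(X)$, which is automatically a bijection because the two sides have the same underlying set. To upgrade $i$ to a weak homotopy equivalence, I would consider the commutative diagram
\[
\lpc(\wildn(\lpc(X))) \xrightarrow{\,p\,} \wildn(\lpc(X)) \xrightarrow{\,i\,} \wildn(X),
\]
where $p$ is the canonical identity from the $\lpc$-coreflection. Proposition \ref{lpcwildconnectionprop} asserts $\lpc(\wildn(\lpc(X)))=\lpc(\wildn(X))$ as spaces, so $i\circ p$ is precisely the canonical identity $\lpc(\wildn(X))\to \wildn(X)$. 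Both $p$ and $i\circ p$ are then weak homotopy equivalences because they are instances of $\lpc(Y)\to Y$, and the two-out-of-three property forces $i$ to be one as well.

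For the second statement, the strategy is to reduce it to Proposition \ref{closedprop} applied to the space $\lpc(X)$ itself. Since $\lpc(X)$ is always locally path-connected, it suffices to show that first countability passes from $X$ to $\lpc(X)$. Given a countable decreasing neighborhood base $\{U_k\}$ at $x\in X$, I would propose the path-components $V_k\subseteq U_k$ containing $x$ as a candidate countable neighborhood base at $x$ in $\lpc(X)$. Each $V_k$ is open in $\lpc(X)$ straight from the definition of the $\lpc$ topology, and any basic $\lpc(X)$-neighborhood of $x$, being the path-component of some $X$-open set $W\ni x$, will contain $V_k$ once $U_k\subseteq W$ (using that $V_k$ is itself path-connected). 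With $\lpc(X)$ confirmed first countable and locally path-connected, Proposition \ref{closedprop} yields that $\wildn(\lpc(X))$ is closed in $\lpc(X)$.

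I do not anticipate a serious obstacle in either direction; both arguments are short applications of the functoriality packaged into Proposition \ref{lpcwildconnectionprop}. The only step requiring any real verification is the claim that the path-components $V_k$ genuinely form a neighborhood base at $x$ in $\lpc(X)$, which is where first countability of $X$ (rather than any stronger countability property) gets used.
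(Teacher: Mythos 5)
Your argument is correct and is essentially the paper's own proof: the triangle $\lpc(\wildn(\lpc(X)))\to\wildn(\lpc(X))\to\wildn(X)$ with two-out-of-three is just a rearrangement of the commutative square the paper draws from Proposition \ref{lpcwildconnectionprop}, and the second part is the same reduction to Proposition \ref{closedprop} applied to $\lpc(X)$. The only difference is that you spell out why $\lpc(X)$ inherits first countability (via the path components $V_k$ of $x$ in the $U_k$), a detail the paper asserts without proof; your verification of it is correct.
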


\begin{proof}
Recall that for any space $Y$, the identity function $\lpc(Y)\to Y$ is a weak homotopy equivalence. Applying $\lpc$ to the identity function $\wildn(\lpc(X))\to\wildn(X)$ from Proposition \ref{lpcwildconnectionprop} gives the following commutative diagram of identity functions.
\[\xymatrix{
\lpc(\wildn(\lpc(X))) \ar[d] \ar@{=}[r] & \lpc(\wildn(X)) \ar[d]\\
\wildn(\lpc(X)) \ar[r] & \wildn(X) 
}\]Since top map is an identity map of spaces (the second statement of Proposition \ref{lpcwildconnectionprop}) and the vertical maps are weak homotopy equivalences, the bottom map is a bijective weak homotopy equivalence.

If $X$ is first countable, the definition of $\lpc(X)$ ensures that $\lpc(X)$ is also first countable. Proposition \ref{closedprop} then applies to $\lpc(X)$, proving the second statement.
\end{proof}

\begin{remark}\label{lpcwildconnectionexmaple}
Even though $\wildn(\lpc(X))$ has the same underlying set as $\wildn(X)$ and is guaranteed to have a topology that is finer than or equal to that of $\wildn(X)$, the two need not be homeomorphic. For example, if $X=\bbe_0\times \bbe_n$, then $\wildn(X)=\bbe_0\times\{b_0\}\cong \bbe_0$. But $\lpc(X)=disc(\bbe_0)\times\bbe_n$ where $disc(\bbe_0)$ is the underlying set of $\bbe_0$ with the discrete topology. Then $\wildn(\lpc(X))=disc(\bbe_0)\times\{b_0\}\cong disc(\bbe_0)$ is discrete.
\end{remark}

We omit the proof of the following basic proposition.

\begin{proposition}\label{disjointunionprop}
For any $n\geq 0$ and collection of spaces $\{X_{\lambda}\}_{\lambda}$, we have \[\wildn\left(\coprod_{\lambda}X_{\lambda}\right)=\coprod_{\lambda}\wildn(X_{\lambda}).\]
\end{proposition}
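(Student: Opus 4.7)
The plan is to exploit the fact that both the earring $\bbe_n$ and the cylinder $S^n\times \ui$ are path-connected in order to force every relevant map or homotopy into a disjoint union to land in a single summand. Once that is done, both the set-theoretic equality and the topological equality fall out of the standard universal property of the coproduct topology.

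First, I would fix $x\in \coprod_{\lambda}X_{\lambda}$ and let $\lambda_0$ be the unique index with $x\in X_{\lambda_0}$. Recall that $\bbe_n=\sw_{j\in\bbn}S^n$ is path-connected (each summand $S^n$ is path-connected and they share the basepoint $b_0$). Hence for any map $f:(\bbe_n,b_0)\to(\coprod_{\lambda}X_{\lambda},x)$, the image $f(\bbe_n)$ is path-connected and contains $x$, so $f(\bbe_n)\subseteq X_{\lambda_0}$. Thus $f$ factors uniquely through the inclusion $X_{\lambda_0}\hookrightarrow \coprod_{\lambda}X_{\lambda}$ as a map $\bar{f}:(\bbe_n,b_0)\to(X_{\lambda_0},x)$. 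By the same path-connectedness argument applied to $S^n\times\ui$, the $j$-th restriction $f_j:S^n\to\coprod_{\lambda}X_{\lambda}$ is null-homotopic in $\coprod_{\lambda}X_{\lambda}$ if and only if $\bar{f}_j:S^n\to X_{\lambda_0}$ is null-homotopic in $X_{\lambda_0}$. Consequently, $f$ is fully essential into the disjoint union if and only if $\bar{f}$ is fully essential into $X_{\lambda_0}$. This gives the equality of underlying sets
\[
\wildn\!\left(\coprod_{\lambda}X_{\lambda}\right)=\coprod_{\lambda}\wildn(X_{\lambda}).
\]

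For the topological equality, I would note that each summand $X_{\lambda}$ is clopen in $\coprod_{\lambda}X_{\lambda}$, so $\wildn(X_{\lambda})=\wildn(\coprod_{\lambda}X_{\lambda})\cap X_{\lambda}$ is clopen in the subspace $\wildn(\coprod_{\lambda}X_{\lambda})$. Since the $\wildn(X_{\lambda})$ partition this subspace into a clopen family, the subspace topology on $\wildn(\coprod_{\lambda}X_{\lambda})$ coincides with the coproduct topology on $\coprod_{\lambda}\wildn(X_{\lambda})$, by the universal property of the coproduct.

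There is essentially no obstacle here; the only thing to be careful about is keeping the bijection between fully essential maps into the coproduct and fully essential maps into a summand strictly separate from the verification that the topologies agree, so that nothing is conflated. Both steps are routine consequences of connectedness arguments and the definition of the disjoint union topology, which is why the proof is omitted in the paper.
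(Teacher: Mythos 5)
The paper omits its own proof of this proposition, so there is nothing to compare against; your argument is the natural one and, for $n\geq 1$, it is complete and correct: path-connectedness of $\bbe_n$ forces any fully essential map into the coproduct to land in a single summand, path-connectedness of $S^n\times\ui$ shows essentiality is detected there, and the clopen partition gives the agreement of topologies.

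There is, however, one genuine flaw: the proposition is stated for all $n\geq 0$, and your central claim that ``$\bbe_n=\sw_{j\in\bbn}S^n$ is path-connected (each summand $S^n$ is path-connected)'' is false for $n=0$, since $S^0$ is a two-point discrete space and the paper identifies $\bbe_0$ with the totally disconnected space $\{1,1/2,1/3,\dots,0\}$. The same problem afflicts your use of $S^0\times\ui$. So as written the proof does not cover the $n=0$ case at all. The case is easily repaired, but by a different mechanism: given a fully essential $f:(\bbe_0,0)\to(\coprod_{\lambda}X_{\lambda},x)$ with $x\in X_{\lambda_0}$, the summand $X_{\lambda_0}$ is an open neighborhood of $x$, so continuity at $0$ forces all but finitely many of the points $f(\ell_j(-1))$ into $X_{\lambda_0}$; restricting to that cofinal subsequence and noting that a point of $X_{\lambda_0}$ lies in the path component of $x$ in the coproduct if and only if it does so in $X_{\lambda_0}$ (because $X_{\lambda_0}$ is clopen) yields a fully essential map into $X_{\lambda_0}$. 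The converse direction and the topological identification go through exactly as in your argument. You should either add this case or restrict your connectivity argument explicitly to $n\geq 1$ and treat $n=0$ separately.
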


Infinite direct products provide an abundance of examples of perfectly $\pi_n$-wild spaces. We characterize their $\pi_n$-wild sets in the next proposition. 

\begin{proposition}\label{productprop}
Let $\{X_i\}_{i\in I}$ be a family of path-connected spaces with direct product $X=\prod_{i\in I}X_i$.
\begin{enumerate}
\item If $I$ is finite, then $X\backslash\wildn(X)=\prod_{i\in I}(X_i\backslash \wildn(X_i))$.
\item If $I$ is infinite and $\pi_n(X_i)$ is trivial for all but finitely many $i\in I$, then $X\backslash\wildn(X)=\prod_{i\in I}(X_i\backslash \wildn(X_i))$.
\item If for infinitely many $i\in I$, $X_i$ has non-trivial $n$-th homotopy group, then $X$ is perfectly $\pi_n$-wild.
\end{enumerate}
\end{proposition}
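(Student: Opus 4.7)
The plan is to use two universal properties in tandem. First, by the universal property of the product, a map $f \colon \bbe_n \to X = \prod_{i \in I} X_i$ based at $x = (x_i)$ corresponds bijectively to a tuple $(f_i)_{i \in I}$ of based maps $f_i = p_i \circ f \colon (\bbe_n, b_0) \to (X_i, x_i)$. Second, since homotopies into a product are tuples of coordinate homotopies, a based map $g \colon S^n \to X$ is null-homotopic if and only if $p_i \circ g$ is null-homotopic in $X_i$ for every $i$. Combining these, the $j$-th restriction $f \circ \ell_j$ is essential in $X$ if and only if $f_i \circ \ell_j$ is essential in $X_i$ for at least one $i$. With these facts, statements (1) and (2) both reduce to the claim: $x \in \wildn(X)$ if and only if $x_i \in \wildn(X_i)$ for some $i \in I$.

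The ``if'' direction is the same in (1) and (2): given a fully essential witness $g \colon (\bbe_n, b_0) \to (X_{i^*}, x_{i^*})$, define $f_{i^*} = g$ and $f_i = c_{x_i}$ for $i \neq i^*$. The assembled map $f = (f_i)$ is continuous coordinate-wise, and the $i^*$-coordinate of each $f \circ \ell_j$ is the essential map $g \circ \ell_j$, so $f$ is fully essential and witnesses $x \in \wildn(X)$.

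For the ``only if'' direction, suppose $f \colon (\bbe_n, b_0) \to (X, x)$ is fully essential, and for each $j$ choose $i(j) \in I$ with $f_{i(j)} \circ \ell_j$ essential. In case (1) we have $|I| < \infty$; in case (2), because each path-connected $X_i$ with $\pi_n(X_i) = 0$ admits no essential map from $S^n$, every $i(j)$ must lie in the finite set $I_0 = \{i \in I : \pi_n(X_i) \neq 0\}$. The pigeonhole principle then yields some $i^*$ for which $J = \{j : f_{i^*} \circ \ell_j \text{ essential}\}$ is infinite. Enumerating $J = \{j_1 < j_2 < \cdots\}$, the sequence $\{f_{i^*} \circ \ell_{j_k}\}_{k \in \bbn}$ of essential based maps converges to $c_{x_{i^*}}$ as a subsequence of the convergent sequence $\{f_{i^*} \circ \ell_j\}_j$, and by Remark \ref{mappingspaceremark} it assembles into a fully essential map $(\bbe_n, b_0) \to (X_{i^*}, x_{i^*})$, exhibiting $x_{i^*} \in \wildn(X_{i^*})$.

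For (3), fix an arbitrary $x = (x_i) \in X$ and pick a countably infinite sequence $i_1, i_2, \ldots$ from the infinite set $\{i \in I : \pi_n(X_i) \neq 0\}$, together with essential based maps $\alpha_k \colon (S^n, s_0) \to (X_{i_k}, x_{i_k})$. Define $f_{i_k} \colon \bbe_n \to X_{i_k}$ whose $k$-th restriction is $\alpha_k$ and whose other restrictions are $c_{x_{i_k}}$; this eventually-constant sequence converges trivially to $x_{i_k}$ and determines a continuous map via Remark \ref{mappingspaceremark}. Set $f_i = c_{x_i}$ for all remaining $i$ and assemble $f \colon (\bbe_n, b_0) \to (X, x)$. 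The $i_j$-coordinate of $f \circ \ell_j$ is $\alpha_j$, so each restriction of $f$ is essential and $f$ is fully essential, giving $x \in \wildn(X)$. The only delicate step in the proof is the pigeonhole argument used in (2), and it breaks down precisely when infinitely many factors carry non-trivial $\pi_n$; that failure is exactly what forces the dichotomy between (2) and (3).
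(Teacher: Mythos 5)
Your proof is correct and follows essentially the same route as the paper: the ``if'' direction by padding a single fully essential coordinate with constants, the ``only if'' direction by pigeonholing the indices $i(j)$ over a finite set and reassembling the resulting cofinal subsequence of essential restrictions, and (3) by distributing one essential sphere per coordinate. The only cosmetic difference is in (2), where the paper groups the trivial-$\pi_n$ factors into a single factor and reduces to (1), while you run the pigeonhole argument directly on the finite set $I_0=\{i\in I:\pi_n(X_i)\neq 0\}$; both are equally valid.
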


\begin{proof}
(1) Let $p_i:X\to X_i$, $i\in I$ denote the projection maps and fix a point $(x_i)\in X$. If $(x_i)\in \wildn(X)$, there is a fully essential map $f:(\bbe_n,b_0)\to (X,(x_i))$. Let $f_j=f\circ\ell_j:S^n\to X$ denote the $j$-th restriction of $f$. Then for each $j\in \bbn$, there exists $i_j\in I$ such that $\pi_{i_j}\circ f_j:(S^n,s_0)\to( X_i,x_i)$ is essential. There exists some $i_0$ for which $i_0=i_j$ for infinitely many $j\in \bbn$. This shows that $x_{i_0}\in\wildn(X_{i_0})$. Conversely, suppose there exists for some $i_0\in I$ such that $x_{i_0}\in \wildn(X_{i_0})$. Find a fully essential map $g_{i_0}:(\bbe_n,b_0)\to (X_{i_0},x_{i_0})$ and if $i\neq i_0$, let $g_i:\bbe_n\to X_i$ be the constant map at $x_i$. Then the map $g:\bbe_n\to X$ with $p_i\circ g=g_i$ for all $i\in\bbn$ is fully essential, proving $(x_i)\in \wildn(X)$. 

(2) If $\pi_n(X_i)$ is trivial for all $i\in I$, then $\pi_n(X)$ is trivial and $\wildn(X)=\emptyset$. Otherwise, we may rearrange the product into a finite product where all factors except one have non-trivial $n$-th homotopy group and apply (1).

(3) Let $(x_i)\in X$. Find a countably infinite subset $\{i_1,i_2,i_3,\dots\}\subseteq I$ such that if $j\in\bbn$, then $X_{i_j}$ is path-connected and $\pi_n(X_{i_j},x_{i_j})\neq 0$. For each $j\in\bbn$, find a map $f_{j}:S^n\to X_{i_j}$ based at $x_{i_j}$ that is not null-homotopic. For each $j\in\bbn$, let $g_j:S^n\to X$ be the map whose $i_j$-th projection is $f_{j}$ and where all other projections are constant at $x_i$. Define $g:(\bbe_n,b_0)\to (X,(x_i))$ so that the restriction of $g$ to the $j$-th sphere is $g_j$. Because $X$ has the product topology and all projections of $g$ are continuous, $g$ is continuous. Moreover, the restriction of $g$ to the $j$-th sphere is not null-homotopic in $X$ and thus $(x_i)\in \wild_n(X)$.
\end{proof}

\begin{example}\label{prodcor}
For binary products, we have $\wildn(X\times Y)=\wildn(X)\times Y\cup X\times\wildn(Y)$. If $X$ and $Y$ are path-connected and both $\pi_n$-wild sets are non-empty, then $\wildn(X\times Y)$ is path-connected. If $\wildn(X)=\{x\}$ and $\wildn(Y)=\{y\}$, then $\wildn(X\times Y)=\{x\}\times Y\cup X\times\{y\}\cong X\vee Y$. Specifically, we have $\wildn(\bbe_n\times\bbe_n)\cong \bbe_n\vee\bbe_n\cong\bbe_n$.
\end{example}

\begin{example}
The infinite dimensional torus $\prod_{i\in\bbn}S^1$ is perfectly $\pi_1$-wild and aspherical. When $k\geq 2$, $\prod_{i\in\bbn}S^k$ is perfectly $\pi_n$-wild whenever $\pi_n(S^k)\neq 0$. On the other hand, since $\pi_n(S^k)=0$ when $n<k$, (2) of Proposition \ref{productprop} gives $\wild_n(S^1\times S^2\times S^3\times\cdots)=\emptyset$ for all $n\geq 0$.
\end{example}

\section{Homotopy invariance of $\pi_n$-wild sets}\label{section4homotopyinvariance}

In general, it is not true that $\wildn(A)\subseteq \wildn(X)$ whenever $A$ is a subspace of $X$. For example, $\bbe_{n}\subseteq \bbr^{n+1}$ where $\wildn(\bbe_n)=\{b_0\}$ and $\wildn(\bbr^{n+1})=\emptyset$.

\begin{definition}
We say a map $f:X\to Y$ is \textit{$\pi_n$-injective} if the induced homomorphism $f_{\#}:\pi_n(X,x)\to \pi_n(Y,f(x))$ is injective for every $x\in X$ (note that $f_{\#}$ is a function if $n=0$).
\end{definition}

\begin{lemma}\label{inductionsteplemma}
If $f:X\to Y$ is $\pi_n$-injective, then $f(\wild_{n}(X))\subseteq \wild_{n}(Y)$. Moreover, any (free) homotopy $H:X\times \ui\to Y$ between $\pi_n$-injective maps $f,g:X\to Y$, restricts to a homotopy $G:\wild_{n}(X)\times \ui\to \wild_n(Y)$ between maps  $f|_{\wild_{n}(X)},g|_{\wild_{n}(X)}:\wild_{n}(X)\to \wild_{n}(Y)$.
\end{lemma}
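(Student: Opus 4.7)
My plan is to prove the inclusion first and then, by a path-conjugation argument, reduce the homotopy statement to it. For the first assertion, if $x\in\wildn(X)$ is witnessed by a fully essential $\alpha\colon(\bbe_n,b_0)\to(X,x)$, then the composition $f\circ\alpha\colon(\bbe_n,b_0)\to(Y,f(x))$ is continuous and each $j$-th restriction $f\circ\alpha_j$ represents the class $f_{\#}([\alpha_j])\in\pi_n(Y,f(x))$. Since $[\alpha_j]\neq 0$ and $f$ is $\pi_n$-injective at $x$, each of these classes is nontrivial, so $f\circ\alpha$ is fully essential and $f(x)\in\wildn(Y)$.

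For the homotopy statement, I would fix $(x_0,t_0)\in\wildn(X)\times\ui$, set $y=H(x_0,t_0)$, and produce a candidate fully essential map $\beta\colon(\bbe_n,b_0)\to(Y,y)$ by taking the slice $\beta(a)=H(\alpha(a),t_0)$, where $\alpha$ witnesses $x_0\in\wildn(X)$. Continuity is automatic because $\beta=H(-,t_0)\circ\alpha$, so the real task is to verify that each $j$-th restriction $\beta_j(s)=H(\alpha_j(s),t_0)$ is essential at $y$. For this I would use the free homotopy
\[
K_j(s,u)=H(\alpha_j(s),(1-u)t_0),\quad (s,u)\in S^n\times\ui,
\]
which connects $\beta_j$ (at $u=0$, based at $y$) to $f\circ\alpha_j$ (at $u=1$, based at $f(x_0)$) along the basepoint trace $\sigma_j(u)=H(x_0,(1-u)t_0)$ running from $y$ to $f(x_0)$.

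The main obstacle is converting this free homotopy into a based statement, and the step I expect to carry the weight is the standard identification of free based homotopy with path-conjugation in $\pi_n$: the trace $\sigma_j$ induces the change-of-basepoint isomorphism $\sigma_{j\#}\colon\pi_n(Y,f(x_0))\to\pi_n(Y,y)$ satisfying $[\beta_j]=\sigma_{j\#}([f\circ\alpha_j])$. If $[\beta_j]=0$ in $\pi_n(Y,y)$, then $[f\circ\alpha_j]=0$ in $\pi_n(Y,f(x_0))$, and $\pi_n$-injectivity of $f$ forces $[\alpha_j]=0$, contradicting full essentiality of $\alpha$. Hence $\beta$ is fully essential and $y\in\wildn(Y)$. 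Continuity of $G=H|_{\wildn(X)\times\ui}$ as a map into $\wildn(Y)$ is then inherited from $H$, and the boundary values $G(-,0)=f|_{\wildn(X)}$ and $G(-,1)=g|_{\wildn(X)}$ land in $\wildn(Y)$ by the first assertion applied separately to $f$ and to $g$ (this is the only place where $\pi_n$-injectivity of $g$ is needed; the interior argument for $0<t_0<1$ uses only $f$).
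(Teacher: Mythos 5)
Your argument is correct, but it takes a more hands-on route than the paper. The paper's proof of the second assertion is a two-line reduction: it invokes the product formula $\wild_n(X\times \ui)=\wild_n(X)\times\ui$ (Example \ref{prodcor}), observes that $H$ itself is $\pi_n$-injective because $H\circ i=f$ where $i(x)=(x,0)$ is a homotopy equivalence, and then applies the first assertion to the map $H:X\times\ui\to Y$. You instead verify wildness of $H(x_0,t_0)$ slice by slice: you push the witness $\alpha$ forward along $H(-,t_0)$ and use the explicit free homotopy $K_j$ together with the change-of-basepoint identity $[\beta_j]=\sigma_{j\#}([f\circ\alpha_j])$ to see that no restriction can die. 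The underlying mechanism is the same --- in the paper it is hidden inside the claim that $H$ is $\pi_n$-injective, which is itself proved by conjugating along the basepoint trace --- but your version is self-contained, avoids the product computation, and makes visible that $\pi_n$-injectivity of $f$ alone suffices for the entire conclusion (even your final parenthetical overstates the need for $g$: the $t_0=1$ slice argument also runs on $f$ only, so the hypothesis on $g$ is genuinely redundant). The trade-off is length: the paper's reduction recycles prior results, while yours re-proves the free-to-based conversion explicitly. Both are valid.
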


\begin{proof}
If $x\in \wild_n(X)$, then there is a fully essential map $\alpha:(\bbe_n,b_0)\to (X,x)$. Since $f$ is $\pi_n$-injective, $f\circ\alpha$ is fully essential. Thus $f(x)\in\wildn(Y)$, proving $f(\wild_{n}(X))\subseteq \wild_{n}(Y)$. For the second statement, suppose $H:X\times \ui\to Y$ is a map such that $H(x,0)=f(x)$ and $H(x,1)=g(x)$. Recall from Example \ref{prodcor} that $\wild_n(X\times \ui)=\wild_n(X)\times \ui$. Since $H\circ i=f$ where the inclusion $i:X\to X\times \ui$, $i(x)=(x,0)$ is a homotopy equivalence, $H$ is $\pi_n$-injective. Therefore $H(\wild_n(X)\times \ui)=H(\wild_n(X\times \ui))\subseteq \wild_n(Y)$. If $G:\wild_n(X)\times \ui\to \wild_n(Y)$ is the restriction of $H$ to $\wild_n(X)\times \ui$, then $G$ is a homotopy from $f|_{\wild_{n}(X)}$ to $g|_{\wild_{n}(X)}$.
\end{proof}

\begin{corollary}\label{retractcor}
If $n\geq 1$ and $A\subseteq X$ is a retract, then $\wild_n(A)\subseteq \wild_n(X)$.
\end{corollary}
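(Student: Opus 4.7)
The plan is to reduce the statement directly to Lemma \ref{inductionsteplemma} by observing that the inclusion of a retract is $\pi_n$-injective. So first I would set up the standard retract data: let $i: A \hookrightarrow X$ denote the inclusion and let $r: X \to A$ be a retraction, i.e.\ a continuous map satisfying $r \circ i = \mathrm{id}_A$.

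Next, I would unpack what this gives on homotopy groups. For any $a \in A$, applying the functor $\pi_n(-,a)$ yields the identity factorization
\[
\pi_n(A,a) \xrightarrow{i_{\#}} \pi_n(X,a) \xrightarrow{r_{\#}} \pi_n(A,a),
\]
whose composite is the identity homomorphism. Hence $i_{\#}$ has a left inverse and is in particular injective, so the inclusion $i$ is $\pi_n$-injective in the sense of the definition preceding Lemma \ref{inductionsteplemma}.

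Finally, applying Lemma \ref{inductionsteplemma} to the $\pi_n$-injective map $i: A \to X$ gives $i(\wild_n(A)) \subseteq \wild_n(X)$. Since $i$ is just the inclusion, this reads $\wild_n(A) \subseteq \wild_n(X)$, which is the desired conclusion.

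There is essentially no obstacle here: the corollary is a direct specialization of Lemma \ref{inductionsteplemma} once one observes the elementary splitting argument for retracts. The only small point worth highlighting, if desired, is why the argument needs $n \geq 1$: the induced map on $\pi_0$ is merely a function of pointed sets rather than a homomorphism, but since $i_{\#}$ is still split by $r_{\#}$ as functions it is still injective, so the argument would actually go through for $n = 0$ as well; the restriction $n \geq 1$ simply matches the standing convention of the section.
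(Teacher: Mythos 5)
Your proof is correct and is exactly the argument the paper intends: the corollary is stated without proof as an immediate consequence of Lemma \ref{inductionsteplemma}, and the splitting $r_{\#}\circ i_{\#}=\mathrm{id}$ showing that the inclusion of a retract is $\pi_n$-injective is the step being left implicit. Your remark about $n=0$ is also accurate, since the definition of $\pi_n$-injective in the paper explicitly allows $f_{\#}$ to be merely an injective function in that case.
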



\begin{corollary}\label{wedgecor}
Suppose $X\vee Y$ has wedgepoint $x_0$. Then \[\wildn(X)\cup\wildn(Y)\subseteq \wildn(X\vee Y)\subseteq \wildn(X)\cup\wildn(Y)\cup\{x_0\}.\]
\end{corollary}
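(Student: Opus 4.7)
My plan is to deduce the first inclusion from Corollary \ref{retractcor} by realizing both $X$ and $Y$ as retracts of $X\vee Y$. The canonical retraction $r_X\colon X\vee Y\to X$, defined to be the identity on $X$ and the constant map $c_{x_0}$ on $Y$, is continuous by the universal property of the quotient defining the wedge, and symmetrically one obtains $r_Y\colon X\vee Y\to Y$. Applying Corollary \ref{retractcor} twice then gives $\wildn(X)\cup\wildn(Y)\subseteq\wildn(X\vee Y)$.

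For the reverse inclusion, I would take $z\in\wildn(X\vee Y)\setminus\{x_0\}$, assume without loss of generality that $z\in X\setminus\{x_0\}$, and build from a fully essential map $f\colon(\bbe_n,b_0)\to(X\vee Y,z)$ a fully essential map $g\colon(\bbe_n,b_0)\to(X,z)$. The key topological observation is that, since $X\vee Y$ is Hausdorff, $X\setminus\{x_0\}$ is open in $X\vee Y$; combined with the convergence $\{f_j\}\to z$ in $(X\vee Y)^{S^n}$ guaranteed by Remark \ref{mappingspaceremark}, this forces $\im(f_j)\subseteq X\setminus\{x_0\}$ for all sufficiently large $j$, say $j\geq J$.

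For such $j$, I would view $f_j$ as a map into $X$. Essentiality in $X$ is preserved, since any null-homotopy of $f_j$ in $X$ would compose with the inclusion $X\hookrightarrow X\vee Y$ to give a null-homotopy in $X\vee Y$, contradicting the fully essential choice of $f$. Finally, I would reassemble the tail $\{f_{j+J}\}_{j\in\bbn}$ into the desired map $g$ via the bijection of Remark \ref{mappingspaceremark}. The required convergence in $X^{S^n}$ follows from the observation that every neighborhood of $z$ in $X$ contained in $X\setminus\{x_0\}$ is also a neighborhood of $z$ in $X\vee Y$, so convergence in $(X\vee Y)^{S^n}$ transfers to $X^{S^n}$. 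The only mild obstacle is this compatibility between the mapping-space topologies and the wedge topology at a point $z\neq x_0$; once articulated, it amounts to a routine check rather than a substantive difficulty.
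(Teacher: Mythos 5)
Your proposal is correct and follows essentially the same route as the paper: the first inclusion via the retractions $r_X,r_Y$ and Corollary \ref{retractcor}, and the second by using that $X\setminus\{x_0\}$ is open in $X\vee Y$ to pass to a tail of the restrictions $f_j$ landing in $X\setminus\{x_0\}$, which remain essential in $X$ since a null-homotopy in $X$ would push forward to one in $X\vee Y$. The only difference is that you spell out the reassembly of the tail via Remark \ref{mappingspaceremark}, which the paper compresses into ``we may assume $\im(f)\subseteq X\setminus\{x_0\}$.''
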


\begin{proof}
Since $X$ and $Y$ are retracts of $X\vee Y$, we have $\wildn(X)\cup\wildn(Y)\subseteq \wildn(X\vee Y)$ by Corollary \ref{retractcor}. For the second inclusion, suppose $x\in \wildn(X\vee Y)\backslash\{x_0\}$. If $x\in X\backslash\{x_0\}$, then there is a fully essential map $f:(\bbe_n,b_0)\to (X\vee Y,x)$. Since $X\backslash\{x_0\}$ is open in $X\vee Y$, we may assume $\im(f)\subseteq X\backslash\{x_0\}$. If the $j$-th restriction of $f$ is inessential in $X$, then it is inessential in $X\vee Y$. Thus $f:\bbe_n\to X$ is fully essential and we have $x\in\wildn(X)$. Similarly, if $x\in Y\backslash\{x_0\}$, then the same argument gives $x\in \wildn(Y)$. This proves $\wildn(X\vee Y)\backslash\{x_0\} \subseteq\wildn(X)\cup\wildn(Y)$, which implies the second inclusion.
\end{proof}

\begin{example}\label{wedgeexample}
In general, it is not true that $\wildn(X\vee Y)=\wildn(X)\cup \wildn(Y)$. For example if $C\bbe_1=\bbe_1\times\ui/\bbe_1\times\{1\}$ is the cone over the $1$-dimensional earring space where the basepoint $x_0$ is the image of $(b_0,0)$, then $C\bbe_1\vee C\bbe_1$ is the well-known Griffiths double cone \cite{CorsonCone,EdaFischer,Griffiths}. Since $C\bbe_1$ is contractible, we have $\wild_1(C\bbe_1)=\emptyset$. However, $\wild_1(C\bbe_1\vee C\bbe_1)=\{x_0\}$. In contrast, $\wild_n(C\bbe_n\vee C\bbe_n)=\emptyset$ when $n\geq 2$ since $\pi_n(C\bbe_n\vee C\bbe_n)=0$ \cite{EK00higher}. However, the authors suspect that $\wild_{2m-1}(C\bbe_m\vee C\bbe_m)$ is non-empty for $m\geq 2$ (due to infinite products of Whitehead products) although this appears to be unconfirmed at this point.
\end{example}

\begin{theorem}[homotopy invariance]\label{homotopyinvariance}
For all $n\geq 0$, the homotopy type of $\wild_n(X)$ is a homotopy invariant of $X$.
\end{theorem}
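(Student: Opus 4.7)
The plan is to promote Lemma \ref{inductionsteplemma} to a functorial statement. Given a homotopy equivalence $f\colon X\to Y$ with homotopy inverse $g\colon Y\to X$, I would first verify that both $f$ and $g$ are $\pi_n$-injective at every basepoint, so that Lemma \ref{inductionsteplemma} yields well-defined continuous restrictions $\tilde{f}\colon \wildn(X)\to \wildn(Y)$ and $\tilde{g}\colon \wildn(Y)\to \wildn(X)$. Then I would apply the second half of the lemma to the free homotopies $g\circ f\simeq \mathrm{id}_X$ and $f\circ g\simeq \mathrm{id}_Y$ to restrict them to homotopies $\tilde{g}\circ \tilde{f}\simeq \mathrm{id}_{\wildn(X)}$ and $\tilde{f}\circ \tilde{g}\simeq \mathrm{id}_{\wildn(Y)}$, thereby establishing $\wildn(X)\simeq \wildn(Y)$.

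The essential preliminary step is the observation that every homotopy equivalence is $\pi_n$-injective at every basepoint, which is the standard change-of-basepoint argument. If $H\colon X\times \ui\to X$ is a free homotopy from $g\circ f$ to $\mathrm{id}_X$, then for each $x\in X$ the path $\alpha_x(t)=H(x,t)$ from $g(f(x))$ to $x$ induces a basepoint-change isomorphism $\pi_n(X,g(f(x)))\to \pi_n(X,x)$ whose composition with $(g\circ f)_{\#}\colon\pi_n(X,x)\to\pi_n(X,g(f(x)))$ is the identity. Hence $(g\circ f)_{\#}$ is an isomorphism, which forces $f_{\#}$ to be injective at $x$; the symmetric argument applied to $f\circ g\simeq \mathrm{id}_Y$ shows $g_{\#}$ is injective at every point of $Y$. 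The case $n=0$ is handled identically, with path components in place of homotopy groups.

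Once $\pi_n$-injectivity of $f$ and $g$ is in hand, the two applications of Lemma \ref{inductionsteplemma} require no further work: the endpoints $\mathrm{id}_X$ and $g\circ f$ of the first homotopy are both $\pi_n$-injective (the identity trivially, and the composite because $\pi_n$-injective maps are closed under composition), so the lemma produces the restricted homotopy on $\wildn(X)\times \ui$; symmetrically for the other side. The main obstacle, mild as it is, is merely arranging the basepoint-change computation; the rest is a purely formal consequence of Lemma \ref{inductionsteplemma}, which already handles continuity of restrictions and the restriction of free homotopies onto the wild sets.
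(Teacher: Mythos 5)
Your proposal is correct and follows essentially the same route as the paper: both apply Lemma \ref{inductionsteplemma} to the homotopy inverses and to the two free homotopies $g\circ f\simeq \mathrm{id}_X$ and $f\circ g\simeq \mathrm{id}_Y$ to obtain restricted homotopy inverses on the wild sets. The only difference is that you spell out the change-of-basepoint argument showing homotopy equivalences are $\pi_n$-injective, which the paper simply asserts.
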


\begin{proof}
Let $f:X\to Y$ and $g:Y\to X$ be homotopy inverses with homotopies $H:X\times \ui \to X$ from $id_{X}$ to $g\circ f$ and $G:Y\times \ui \to Y$ from $id_{Y}$ to $f\circ g$. Fix $n\geq 0$. Since $f$ and $g$ are $\pi_n$-injective, we have $f(\wild_n(X))\subseteq \wild_n(Y)$ and $g(\wild_n(Y))\subseteq \wild_n(X)$. By the second statement of Lemma \ref{inductionsteplemma}, $H$ restricts to a homotopy $H':\wild_{n}(X)\times \ui \to \wild_{n}(X)$ from $id_{\wild_{n}(X)}$ to $f\circ g|_{\wild_{n}(X)}$. Similarly, $G$ restricts to a homotopy $G':\wild_{n}(Y)\times \ui \to \wild_{n}(Y)$ from $id_{\wild_{n}(Y)}$ to $g\circ f|_{\wild_{n}(Y)}$. Thus $f|_{\wild_{n}(X)}:\wild_{n}(X)\to \wild_{n}(Y)$ and $g|_{\wild_{n}(Y)}:\wild_{n}(Y)\to \wild_{n}(X)$ are homotopy inverses.
\end{proof}

\begin{corollary}
If $\wildn(X)\neq \emptyset$ for some $n\geq 0$, then $X$ is not homotopy equivalent to a CW-complex or a manifold.
\end{corollary}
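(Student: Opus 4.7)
The plan is to prove the contrapositive: if $X$ is homotopy equivalent to a CW-complex or manifold $Y$, then $\wildn(X)=\emptyset$ for every $n\geq 0$. Suppose $X\simeq Y$. By the homotopy invariance theorem just established (Theorem \ref{homotopyinvariance}), we have $\wildn(X)\simeq \wildn(Y)$ as topological spaces.

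Next, I would invoke Example \ref{emptyexample}: CW-complexes and manifolds are locally contractible, and local contractibility at a point $y$ rules out $y$ being a $\pi_n$-wild point (any small sphere near $y$ is null-homotopic in a contractible neighborhood). Hence $\wildn(Y)=\emptyset$.

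Finally, a homotopy equivalence between $\wildn(X)$ and the empty space forces $\wildn(X)=\emptyset$, since the empty space is homotopy equivalent only to itself (there is no map from a nonempty space to $\emptyset$). Taking the contrapositive yields the corollary. There is no real obstacle here; the statement is an immediate consequence of Theorem \ref{homotopyinvariance} combined with Example \ref{emptyexample}, and the only subtlety worth noting explicitly in the write-up is that $\wildn(X)\simeq \emptyset$ implies $\wildn(X)=\emptyset$ as a set.
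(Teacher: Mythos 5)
Your proof is correct and follows exactly the intended route: the paper states this corollary without proof, and the evident argument is precisely your combination of Theorem \ref{homotopyinvariance} with Example \ref{emptyexample}, noting that no nonempty space maps to $\emptyset$. The one subtlety you flag (that $\wildn(X)\simeq\emptyset$ forces $\wildn(X)=\emptyset$) is handled correctly; alternatively one can cite Lemma \ref{inductionsteplemma} directly to get $f(\wildn(X))\subseteq\wildn(Y)=\emptyset$.
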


Since two totally path-disconnected homotopy equivalent spaces must be homeomorphic, we have the following.

\begin{corollary}\label{tpdcor}
If $X\simeq Y$ and $\wildn(X)$ and $\wildn(Y)$ are totally path-disconnected, then $\wildn(X)\cong\wildn(Y)$.
\end{corollary}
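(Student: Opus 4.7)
The plan is to combine Theorem \ref{homotopyinvariance} with the general fact stated just before the corollary: any two totally path-disconnected spaces that are homotopy equivalent are already homeomorphic. So the work reduces to explaining (or invoking) that general fact in this particular setting.

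First I would apply Theorem \ref{homotopyinvariance} to the hypothesis $X\simeq Y$ to obtain homotopy inverses $\phi:\wildn(X)\to\wildn(Y)$ and $\psi:\wildn(Y)\to\wildn(X)$, along with homotopies $H:\wildn(X)\times\ui\to\wildn(X)$ from $id_{\wildn(X)}$ to $\psi\circ\phi$ and $G:\wildn(Y)\times\ui\to\wildn(Y)$ from $id_{\wildn(Y)}$ to $\phi\circ\psi$. The crucial observation is that for each $x\in\wildn(X)$, the assignment $t\mapsto H(x,t)$ is a path in $\wildn(X)$ from $x$ to $\psi(\phi(x))$. Since $\wildn(X)$ is totally path-disconnected by hypothesis, this path must be constant, so $\psi(\phi(x))=x$. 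Symmetrically, using $G$ and the total path-disconnectedness of $\wildn(Y)$, we get $\phi(\psi(y))=y$ for every $y\in\wildn(Y)$.

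Hence $\phi$ and $\psi$ are mutually inverse as set functions. Since both are continuous (as the restrictions produced by the proof of Theorem \ref{homotopyinvariance}), $\phi:\wildn(X)\to\wildn(Y)$ is a continuous bijection with continuous inverse, i.e.\ a homeomorphism, which is the desired conclusion.

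There is no serious obstacle here; the only subtlety worth being careful about is that $H$ and $G$ are genuine self-homotopies of the wild sets (not just of $X$ and $Y$), but this is exactly what Lemma \ref{inductionsteplemma} and the proof of Theorem \ref{homotopyinvariance} provide. Everything after that is the elementary path-lifting-by-constant-paths argument in a totally path-disconnected space.
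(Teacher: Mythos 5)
Your proposal is correct and follows exactly the paper's route: the paper derives the corollary from Theorem \ref{homotopyinvariance} together with the observation that homotopy equivalent totally path-disconnected spaces are homeomorphic, which is precisely the constant-path argument you spell out. No gaps.
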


\begin{example}
Suppose $X$ and $Y$ are spaces with finitely many $\pi_n$-wild points. If $\wildn(X)$ and $\wildn(Y)$ have a distinct number of elements, then Corollary \ref{tpdcor} implies that $X\nsimeq Y$. Specifically, suppose $T$ is a tree and $k,m$ are distinct natural numbers. If $X$ is obtained by attaching $k$ copies of $\bbe_n$ to $k$-distinct points in $T$ and $Y$ is obtained by attaching $m$-copies of $\bbe_n$ to $m$-distinct points in $T$, then the Hurewicz Theorem and Mayer-Vietoris Sequence apply to show that $X$ and $Y$ are both $(n-1)$-connected and have $n$-th homotopy group isomorphic to $\bbz^{\bbn}$. However, $X\nsimeq Y$ since $X$ and $Y$ have a distinct finite number of $\pi_n$-wild points.

Although it appears that $X$ and $Y$ have isomorphic homotopy groups, it is unlikely to provide a counterexample to Problem \ref{whproblem}. For, if $f:X\to Y$ is a weak homotopy equivalence, Lemma \ref{inductionsteplemma} implies that $f(\wildn(X))\subseteq \wildn(Y)$. If $k<m$, then it is easy to show that $f_{\#}:\pi_n(X,x_0)\to\pi_n(Y,y_0)$ cannot be surjective. If $k>m$, then $f$ must identify two wild points and one should be able to use infinite products of Whitehead products to show that $f_{\#}:\pi_{2n-1}(X,x_0)\to\pi_{2n-1}(Y,y_0)$ is not surjective. At this point, this last claim is conjectural and a proof is likely to require a complete description of $\pi_{2n-1}(X,x_0)$.
\end{example}

\begin{example}
We can also distinguish homotopy types if we modify the previous example by attaching infinite earrings of different dimensions. Suppose $n>m\geq 2$ such that $\pi_n(\bbe_m)\neq 0$. Let $X$ be the space obtained by attaching a copy of $\bbe_m$ and $\bbe_n$ to $\ui$ by identifying the respective wedgepoints with $0$ and $1$. We compare this space with $\bbe_m\vee \bbe_n$. Then $\wild_m(X)$ and $\wild_m(\bbe_m\vee \bbe_n)$ both contain a single point since $\wild_m(\bbe_n)=\emptyset$. However, $\wild_n(X)=\{0,1\}$ while $\wild_n(\bbe_m\vee \bbe_n)$ contains a single point. Thus $X\nsimeq \bbe_m\vee \bbe_n$. In particular, the quotient map $X\to \bbe_m\vee \bbe_n$ collapsing the arc to a point is not a homotopy equivalence.
\end{example}

\begin{example}\label{wildcircleexample}
For $n\geq 1$, consider an $n$-dimensional Peano continuum $WS^n$ obtained by attaching a sequence of $n$-spheres whose diameters approach $0$ to $S^n$ along the points of an enumerated dense subset of $S^n$ (a topological version of this construction will be formalized in the next section). We refer to $WS^n$ as the ``wild $n$-sphere" or the ``wild circle" in the case $n=1$ (see left image in Figure \ref{fig2}). Then $\wild_n(WS^n)=S^n$ is not $n$-connected. Moreover, according to Theorem \ref{homotopyinvariance}, $WS^n$ cannot be homotopy equivalent to any space $Y$ where $\wild_1(Y)$ is not homotopy equivalent to $S^n$. For instance, if $m\neq n$, then $WS^m\nsimeq WS^n$.
\end{example}

\begin{example}[A planar set not homotopy equivalent to any one-dimensional space]
It is shown in \cite{CCZaspherical} that there exists planar Peano continua, which are not homotopy equivalent to any one-dimensional Peano continuum. Here, we give a simple example and elementary argument using the homotopy invariance of wild sets. Let $WS^1$ be the wild circle from Example \ref{wildcircle} and let $Z=WS^1\cup\bbd^{2}$ where $\bbd^2$ is the closed unit disk (see the right image in Figure \ref{fig2}). We still have $\wild_{1}(Z)=S^1$ but the inclusion $j:S^1\to Z$ is null-homotopic. Suppose $Z'$ is a one-dimensional space and $f:Z\to Z'$ and $g:Z'\to Z$ are homotopy inverses. Then $f$ and $g$ restrict to a homotopy equivalence $S^1\simeq \wild_1(Z')$ on wild $\pi_1$-sets. However, every inclusion map of one-dimensional spaces is $\pi_1$-injective \cite[Corollary 3.3]{CConedim} and so the inclusion $k:\wild_1(Z')\to Z'$ is $\pi_1$-injective. Since $f|_{S^1}=f\circ j$ is not-null-homotopic in $\wild_1(Z')$, $k\circ f|_{S^1}$ is not null-homotopic in $Z'$; a contradiction. 

The above argument actually implies that any space $X$ for which the inclusion $\wild_1(X)\to X$ is not $\pi_1$-injective cannot be homotopy equivalent to a one-dimensional space. On the other hand, when $n\geq 2$, an inclusion map $A\to X$ of $n$-dimensional metric spaces need not be $\pi_n$-injective, e.g. $S^1\vee S^n\to S^n\vee S^n$. Hence, the argument does not extend to higher dimensions.
\end{example}

\begin{figure}[ht]
\centering \includegraphics[height=2.4in]{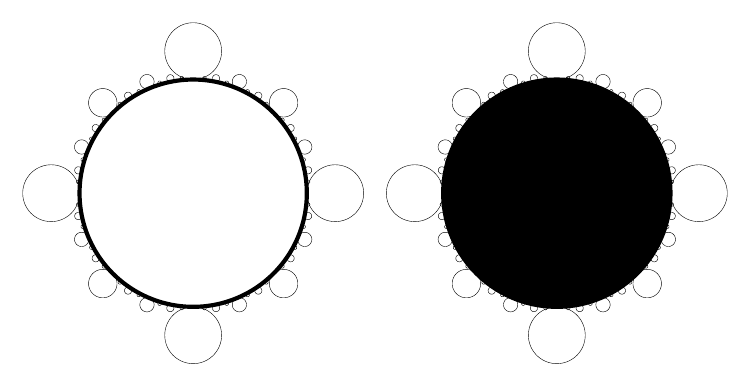}\label{fig2}
\caption{\label{wildcircle}A one-dimensional Peano continuum with a non-simply connected $\pi_1$-wild set (left) and the filled-in version (right), which is not homotopy equivalent to any one-dimensional space.}
\end{figure}

\begin{example}
Let $X$ be obtained by attaching a sequence $A_1,A_2,A_3,\dots$ of copies of $S^n$ with diameters approaching $0$ along a dense set in $\ui$ (see Figure \ref{fig6}). Then $\wildn(X)=\ui$. Since $\ui$ is homotopy equivalent to $\wildn(\bbe_n)=\{b_0\}$, it does not follow directly from Theorem \ref{homotopyinvariance} that the homotopy type of $X$ is distinct from $\bbe_n$. This is a motivation for our rigidity result (Theorem \ref{thm2}), which applies in this case to distinguish the homotopy types of $X$ and $\bbe_n$.
\end{example}

\begin{figure}[ht]
\centering \includegraphics[height=1.2in]{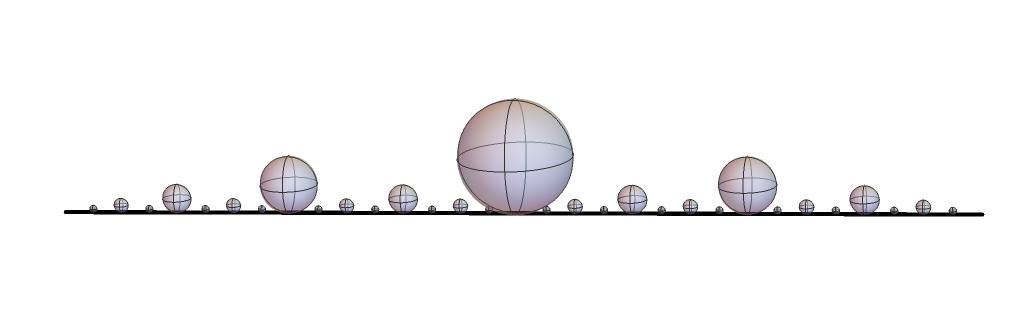}
\caption{\label{fig6}Attaching a shrinking sequence of $2$-spheres along the dyadic rationals in $\ui$.}
\end{figure}




\section{Constructing Spaces with Prescribed Wild Sets}\label{section5makingspaceswild}

In this section, our main goal is to prove Theorem \ref{thm1}, which implies that every compact metric space $X$ may be realized as the $\pi_n$-wild set of some Peano continuum. Our construction occurs in two steps. First, we attach a countable sequence of shrinking arcs to $X$ to obtain a space that is guaranteed to be a Peano continuum. Second, we attach a sequence of shrinking copies of $\bbe_n$ along a dense set in $X$ (not affecting the arcs attached in the first step) to ensure the resulting space is wild at all points of $X$.



\begin{lemma}\label{peanocontlemma}
Let $n\geq 1$. For every compact metric space $X$ there exists a Peano continuum $Y$ such that
\begin{enumerate}
\item $X\subseteq Y$ and $Y\backslash X$ is a disjoint union of countably many open arcs,
\item the inclusion $i:X\to Y$ is $\pi_1$-injective,
\item $\wild_n(X)\subseteq \wild_n(Y)\subseteq X$,
\item $\dim(Y)= \max\{1,\dim(X)\}$.
\end{enumerate}
\end{lemma}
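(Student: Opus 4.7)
The plan is to build $Y$ by attaching to $X$ a countable tree of arcs of shrinking diameter along a dense subset of $X$, using nested finite nets. The tree structure of the attachment is the key organizational device: it provides the local path-connectedness needed to make $Y$ a Peano continuum while containing no independent cycles that could kill loops of $X$.

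For the construction, fix a countable dense subset $D \subseteq X$ and, for each $k \geq 1$, a finite $(1/2^k)$-net $F_k \subseteq D$ with $F_k \subseteq F_{k+1}$. For each $y \in F_{k+1} \setminus F_k$, choose $\phi(y) \in F_k$ with $d_X(y, \phi(y)) < 1/2^k$ and attach a copy $A_y$ of $\ui$ to $X$ by identifying its endpoints with $y$ and $\phi(y)$. Metrize $Y := X \cup \bigsqcup_y A_y$ compatibly so that each $A_y$ has diameter at most $2/2^k$. Item (1) is immediate from the construction. $Y$ is a Peano continuum: compactness follows from the shrinking arc diameters, and local path-connectedness at $x \in X$ holds because, for any small $\epsilon > 0$, the $Y$-ball about $x$ of radius $\epsilon$ contains the net points near $x$ together with the tree-arcs joining them. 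Item (4) follows from the countable sum theorem for covering dimension \cite[Theorem 1.5.3]{EngelkingDimThry}, since each $A_y$ is $1$-dimensional.

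For (3), the containment $\wildn(Y) \subseteq X$ is immediate since any point in the interior of some $A_y$ has a Euclidean neighborhood, hence cannot be $\pi_n$-wild. Conversely, if $x \in \wildn(X)$ is witnessed by a fully essential $f: (\bbe_n, b_0) \to (X, x)$, then each restriction $f_j$ remains essential in $Y$: for $n \geq 2$, a null-homotopy $D^{n+1} \to Y$ of $f_j$ can be homotoped off the $1$-dimensional arcs by a cellular-approximation argument (attaching $1$-cells to $X$ does not affect $\pi_n$ for $n \geq 2$), yielding a null-homotopy inside $X$, a contradiction; for $n = 1$, essentiality in $Y$ is a direct consequence of (2).

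The main obstacle is (2), the $\pi_1$-injectivity of $i : X \to Y$. The strategy is to exhaust $Y$ by finite-stage subspaces $Y_m := X \cup \bigcup_{\mathrm{lev}(y) \leq m} A_y$, where $\mathrm{lev}(y) = k$ when $y \in F_{k+1} \setminus F_k$. Each $Y_m$ consists of $X$ with only finitely many arcs attached, so iterated van Kampen yields a decomposition $\pi_1(Y_m, x_0) \cong \pi_1(X, x_0) \ast F_m$ for a free group $F_m$, in which the inclusion $\pi_1(X, x_0) \hookrightarrow \pi_1(Y_m, x_0)$ is injective onto the natural free factor. Given a null-homotopy $H : D^2 \to Y$ of a loop $\alpha$ in $X$, the key step is to continuously modify $H$ to a null-homotopy $H' : D^2 \to Y_m$ for some sufficiently large $m$, by pushing $H$ out of the interiors of all arcs of level exceeding $m$. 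Since those arc diameters tend to zero, the push-off is a perturbation of arbitrarily small size; the tree structure of the attachment ensures that the individual retractions can be combined into a single continuous map. The technical heart of the argument is verifying that this simultaneous push-off works even when the endpoints $y, \phi(y)$ of a high-level arc lie in distinct path-components of $X$, since in that case no direct replacement path exists inside $X$ itself. Once $H'$ is constructed, the free-factor description of $\pi_1(Y_m, x_0)$ implies $\alpha$ is null-homotopic in $X$.
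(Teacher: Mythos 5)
Your construction has two genuine gaps, and the first is the one your write-up passes over most quickly: the verification that $Y$ is locally path-connected (indeed, even path-connected at points of $X\setminus D$). Your justification --- that a small ball about $x$ ``contains the net points near $x$ together with the tree-arcs joining them'' --- is not correct as stated: two net points that are close to $x$ in $X$ need not be joined by a \emph{short} path in the tree. The tree-path between them runs through their least common ancestor, which can lie arbitrarily far from $x$ (e.g.\ at the root), so the small ball about $x$ contains many net points but not necessarily any arcs connecting them to one another or to $x$. One can salvage path-connectedness by a K\"onig-type argument producing, for each $w\in X$, an infinite descending branch of the tree converging to $w$ with summable displacements; but local path-connectedness still requires joining the branches of two nearby points \emph{without leaving a small ball}, and nothing in your choice of $\phi$ guarantees this. (There is also a degenerate defect: if $F_1$ has more than one point, or if $X$ is finite so that $F_k=X$ eventually, your forest of arcs does not even connect $Y$.) The paper sidesteps all of this by taking a continuous surjection $g:\mcc\to X$ from the Cantor set, pushing out along the inclusion $\mcc\hookrightarrow\ui$, and invoking Hahn--Mazurkiewicz: $Y$ is a continuous Hausdorff image of $\ui$, hence automatically a Peano continuum, with no local path-connectedness check needed.

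The second gap is the one you flag yourself but do not close: the ``technical heart'' of (2), namely pushing a null-homotopy $H:D^2\to Y$ off an arc whose two endpoints lie in different path components of $X$. This is precisely the hard case --- there is no replacement path in $X$, so the push-off is not a small perturbation, and your van Kampen decomposition $\pi_1(Y_m)\cong\pi_1(X)\ast F_m$ is itself suspect when $X$ is not locally nice at the attachment points. The paper does not attempt this argument; it cites \cite[Lemma 4.3]{CMRZZ08}, which gives $\pi_1$-injectivity of $X\hookrightarrow Y$ whenever $Y\setminus X$ is a disjoint union of open $1$-cells. Since your construction (once repaired) also produces $Y\setminus X$ as a disjoint union of open arcs, you could and should invoke the same lemma rather than reprove it. Finally, a smaller point on (3): for $n\geq 2$ you appeal to cellular approximation to homotope a null-homotopy off the arcs, but $Y$ is not a CW-complex (the arcs are attached in a shrinking, non-weak topology), so cellular approximation does not apply directly; the paper instead deduces (3) from (2), Lemma \ref{inductionsteplemma}, and local contractibility of $Y$ at points of $Y\setminus X$.
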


\begin{proof}
If $\dim(X)=0$, we can identify $X$ with a compact subset of $\bbr$. Let $a=\min(X)$ and $b=\max(X)$ in $\bbr$ and set $Y=[a,b]$ to satisfy the conditions of the theorem. We now assume $\dim(X)\geq 1$. Let $\mcc\subseteq \ui$ be the Ternary Cantor Set and let $f:\mcc\to \ui$ be the inclusion. By the Hausdorff-Alexandroff Theorem \cite[\S 7.3, Theorem 7.7]{Nadler}, there exists a continuous surjection $g:\mcc\to X$. Let $Y$ be the pushout of $f$ and $g$, that is the quotient space $X\sqcup \ui/\mathord{\sim}$ where $f(c)\sim g(c)$ for all $c\in \mcc$. Let $Q:X\sqcup\ui\to Y$ be the quotient map. Since $f$ is injective, the induced map $i:X\to Y$ is injective and since $g$ is surjective, the induced map $q:\ui\to Y$ is surjective (using basic properties of pushouts).

First, we show that $Y$ is Hausdorff. Suppose $y_1,y_2\in Y$ are disjoint points. It is straightforward to check that $q$ maps $\ui\backslash \mcc$ homeomorphically onto $Y\backslash i(X)$. Thus we may focus our attention to the case where at least one of $y_1$ or $y_2$ lies in $i(X)$. Let $\{(a_j,b_j)\mid j\in \bbn\}$ be an enumeration of the connected components of $\ui\backslash\mcc$ and let $m_j=\frac{a_j+b_j}{2}$ be the midpoint. If $y_1\in i(X)$ and $y_2\in q((a_j,b_j))$ for some $j\in\bbn$, find $a_j<s<t<r<b_j$ where $q(t)=y_2$. Now $Y\backslash q([s,r])$ and $q((s,r))$ are disjoint open neighborhoods in $Y$ of $y_1$ and $y_2$ respectively. Suppose $y_1,y_2\in i(X)$. Given an open set $U\subseteq X$, let
\begin{itemize}
\item[] $C_{U}=\{j\in J\mid g(\{a_j,b_j\})\subseteq U\}$,
\item[] $L_U=\{j\in J\mid g(a_j)\in U\text{ and }g(b_j)\notin U\}$,
\item[] $R_U=\{j\in J\mid g(a_j)\notin U\text{ and }g(b_j)\in U\}$.
\end{itemize}
Define \[E(U)=U\cap \bigcup_{j\in C_U}q((a_j,b_j))\cup \bigcup_{j\in L_U}q((a_j,m_j))\cup \bigcup_{j\in R_U}q((m_j,b_j)).\]
Note that $Q^{-1}(E(U))$ is open in $X\sqcup\ui$ and thus $E(U)$ is open in $Y$. Find disjoint open neighborhoods $U,V$ in $X$ containing $y_1,y_2$ respectively. Then $E(U)$ and $E(V)$ are disjoint neighborhoods of $y_1$ and $y_2$ in $Y$, completing the proof that $Y$ is Hausdorff. By the Hahn-Mazurkiewicz Theorem \cite{Nadler}, the continuous image of $\ui$ onto a Hausdorff space is a Peano continuum. Thus $Y$ is a Peano continuum. It follows that $q:\ui\to Y$ is a quotient map and $i:X\to Y$ is an embedding. Thus, we may identify $X$ naturally as a subspace of $Y$. Since $\ui\backslash \mcc$ is a disjoint union of countably many open $1$-cells, so is $Y\backslash X$. 

For (2), we note that since $Y\backslash X$ is a disjoint union of open $1$-cells, Lemma 4.3 of \cite{CMRZZ08} implies that the inclusion $i:X\to Y$ is $\pi_1$-injective.

(3) follows from (2),  Lemma \ref{inductionsteplemma}, and the fact that $Y$ is locally contractible at the points of $Y\backslash X$.

For (4), recall that we have assumed $\dim(X)\geq 1$. Since $X$ embeds in $Y$, we have $\dim(X)\leq \dim(Y)$. That $\dim(Y)\geq \dim(X)$ follows from the ``Sum Theorem" in dimension theory \cite[1.5.3]{EngelkingDimThry}.
\end{proof}

\begin{remark}
Although we do not prove it here, it follows from forthcoming work of the first author and Curtis Kent on generalized covering spaces in the sense of Fischer-Zastrow \cite{FZ07} that the inclusion map $i:X\to Y$ in Lemma \ref{peanocontlemma} is, in fact, $\pi_{m}$-injective for all $m\geq 1$.
\end{remark}

\begin{example}\label{uncountableexample}
Let $n\geq 2$. The space $X=\bbe_1\vee S^n$ is an $n$-dimensional Peano continuum whose $n$-th homotopy group is isomorphic to the uncountable free-abelian group $\bbz[\pi_1(\bbe_1)]$ \cite[Example 7.4]{AcetiBrazas}. However, if $U$ is a contractible neighborhood of the basepoint in $S^n$, then $\bbe_1\vee U$ is deformation retracts onto $\bbe_1$, which is aspherical \cite{CFhigher}. It follows that $X$ has a neighborhood base of aspherical sets at the wedgepoint. Hence, $\wild_n(X)=\emptyset$ even though $\pi_n(X)$ is uncountable.
\end{example}

We also have the following geometric version of Lemma \ref{peanocontlemma}, which is motivated by Problem \ref{whproblem} and the fact that finite dimensional Peano continua embed into finite dimensional real space.

\begin{corollary}\label{peanocontlemmarn}
For every compact metric space $X\subseteq \bbr^n$ there exists a Peano continuum $Z$ such that $X\subseteq Z\subseteq \bbr^n$ and $Z\backslash X$ is empty or a disjoint union of countably many open line segments.
\end{corollary}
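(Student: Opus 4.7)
The plan is to adapt the proof of Lemma \ref{peanocontlemma} to the ambient space $\bbr^n$, replacing the abstractly attached arcs with actual line segments. Trivial cases can be handled directly: if $X$ is already a Peano continuum, take $Z=X$; if $\dim(X)=0$ and $n=1$, take $Z=[\min X,\max X]\subseteq\bbr$, as in the opening of the lemma's proof. Assume from now on that $X$ is infinite.

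Using the Hausdorff-Alexandroff theorem, fix a continuous surjection $g\colon\mcc\to X$ from the Ternary Cantor set $\mcc\subseteq\ui$ and enumerate the components of $\ui\setminus\mcc$ as $\{(a_j,b_j)\}_{j\in\bbn}$. Extend $g$ to a map $\tilde g\colon\ui\to\bbr^n$ by affine interpolation on each gap:
\[
\tilde g(t)=g(a_j)+\tfrac{t-a_j}{b_j-a_j}(g(b_j)-g(a_j)),\qquad t\in[a_j,b_j].
\]
Since $\sum_j(b_j-a_j)=1$, the widths $b_j-a_j$ tend to zero, so by uniform continuity of $g$ we have $|g(a_j)-g(b_j)|\to 0$, which suffices to verify that $\tilde g$ is continuous. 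Setting $Z=\tilde g(\ui)\subseteq\bbr^n$, the space $Z$ is a compact Hausdorff continuous image of $\ui$, hence a Peano continuum by the Hahn-Mazurkiewicz theorem, with $X\subseteq Z$.

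By construction, each gap $(a_j,b_j)$ contributes an open straight segment from $g(a_j)$ to $g(b_j)$ inside $Z\setminus X$. The main obstacle is ensuring $Z\setminus X$ is literally a \emph{disjoint} union of open line segments: a priori, the open segment associated to one gap could meet $X$ at an interior point, or two distinct segments could cross each other in their interiors, and either situation would obstruct the required decomposition. I would address this by a two-stage refinement of $\tilde g$. First, whenever $\tilde g((a_j,b_j))$ meets $X$ at interior parameter values, subdivide $[a_j,b_j]$ at the preimages of those $X$-points and re-interpolate affinely on each resulting subinterval, so that every sub-segment has interior disjoint from $X$. Second, to eliminate interior crossings between segments from distinct gaps, the attachment data must be chosen or perturbed carefully: in $\bbr^n$ for $n\geq 3$, a general-position argument handles this directly; in $\bbr^2$, more delicate work is required, leveraging the shrinking property $|g(a_j)-g(b_j)|\to 0$ to locally re-pair endpoints so as to remove crossings while preserving connectivity onto $X$. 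Throughout these refinements, $\tilde g$ remains continuous and $Z$ remains a Peano continuum containing $X$, and the final decomposition of $Z\setminus X$ is the required disjoint union of countably many open line segments.
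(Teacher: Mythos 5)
Your construction of $Z$ is the same as the paper's: a Cantor surjection $g:\mcc\to X$, affine extension over the gaps, and Hahn--Mazurkiewicz applied to the image of $\ui$. (The paper gets continuity slightly more efficiently by noting that the pushout $Y$ from Lemma \ref{peanocontlemma} maps onto $Z$, so $Z$ is a continuous Hausdorff image of a Peano continuum; your direct null-sequence argument via uniform continuity of $g$ is also fine.) Your ``first stage'' is likewise what the paper does, although no re-interpolation is actually performed or needed: the subdivision points already lie on the segment $L_j$ joining $g(a_j)$ to $g(b_j)$, so re-interpolating affinely returns the same set, and the relevant observation is simply that $L_j\setminus X$ is an open subset of the segment (as $X$ is closed and contains both endpoints), hence a countable disjoint union of open subsegments.

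The second stage is where the proposal goes astray, in two ways. First, the proposed fix is not available: the endpoints of $L_j$ are forced to be $g(a_j)$ and $g(b_j)$, so the segments cannot be put in general position or ``re-paired'' without changing $g$ and thereby losing both $X\subseteq Z$ and the continuity of the extension; two straight segments with pinned, distinct endpoint pairs can be forced to cross in any $\bbr^n$, and replacing them by bent arcs would destroy the conclusion (corner points of $Z\setminus X$ would lie in no open segment). Second, and more importantly, no fix is needed, because the statement only asks that $Z\setminus X$ admit a partition into countably many open line segments, not that these segments be the components of $Z\setminus X$ or open in $Z$. Crossings are absorbed by an elementary re-decomposition: group the nondegenerate segments by the lines $\ell_1,\ell_2,\dots$ they span; then $V_i=\bigcup\{L_j\setminus X: L_j\subseteq \ell_i\}$ is open in $\ell_i$, and $V_i'=V_i\setminus\bigcup_{k<i}V_k$ removes at most $i-1$ points from $V_i$ (since $\ell_i\cap\ell_k$ is at most a point for $k\neq i$), so each $V_i'$ is still open in $\ell_i$ and hence a countable disjoint union of bounded open intervals, i.e., open line segments; the $V_i'$ are pairwise disjoint and union to $Z\setminus X$. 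This is the tacit content of the paper's final sentence; with it, your proof closes with no perturbation argument and no case split on $n$.
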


\begin{proof}
As in the proof of the previous Lemma, let $\mcc\subseteq \ui$ be the Ternary Cantor Set, $f:\mcc\to \ui$ be the inclusion, $g:\mcc\to X$ be a continuous surjection, and let $Y$ be the pushout of $f$ and $g$. For each connected component $(a,b)$ of $\ui\backslash \mcc$, let $L(a,b)\subseteq \bbr^n$ be the line segment with endpoints $g(a)$ and $g(b)$. Let $Z$ be the union of $X$ and the line segments $L(a,b)$ ranging over the connected components of $\ui\backslash\mcc$. Let $j:X\to Z$ denote the inclusion map. We may extend $j\circ g:\mcc\to Z$ to a path $\alpha:\ui\to Z$ so that the restriction of $\alpha$ to $[a,b]$ is a linear parameterization of $L(a,b)$. The continuity of $\alpha$ is straightforward to verify using the continuity of $g$ and the fact that linear paths are geodesics. Since $Y$ is a pushout by construction, the maps $\alpha$ and $j$ uniquely induce a surjective map $h:Y\to Z$. Since $Y$ is a Peano continuum so is $h(Y)=Z$. Note that for each component $(a,b)$ of $\ui\backslash \mcc$, $L(a,b)\backslash X$ is empty or a countable disjoint union of line segments. Hence, $Z\backslash X$ is empty or a countable disjoint union of open line segments.
\end{proof}

Fix a space $X$ and a non-empty subspace $A\subseteq X$. We construct a space $Y$ from this pair so that $X\subseteq Y$ and $\wildn(Y)=A$.

\begin{definition}[Shrinking Point-Attachment Spaces]\label{shrinkingattachmentdef}
Let $X$ be a compact space, $A=\{a_j\}_{j\in\bbn}$ be a sequence (of not necessarily distinct points) in $X$ and let $\scrb=\{(B_j,b_j)\}_{j\in\bbn}$ be a sequence of based spaces. Let $\san(X,A,\scrb)=X\sqcup \coprod_{j\in\bbn}B_j/\mathord{\sim}$ where $a_j\sim b_j$ for all $j\in\bbn$, that is, $\san(X,A,\scrb)$ is obtained by attaching each $B_j$ to $X$ by identifying the basepoint of $B_j$ with $a_j$. We give $\san(X,A,\scrb)$ the following topology: $U\subseteq \san(X,A,\scrb)$ is open if and only if
\begin{enumerate}
\item $X\cap U$ is open in $X$
\item $ B_j\cap U$ is open in $B_j$ for all $j\in \bbn$,
\item whenever $x\in X\cap U$ and $j_1<j_2<j_3<\cdots$ is such that $\{a_{j_i}\}_{i\in\bbn}\to x$ in $X$, then $B_{j_i}\subseteq U$ for all but finitely many $i\in\bbn$.
\end{enumerate}
When $\scrb=\{B,B,B,\dots\}$ is constant, we write $\san(X,A,B)$ for the space $\san(X,A,\scrb)$. In general, we will refer to spaces of the form $\san(X,A,\scrb)$ as \textit{shrinking point-attachment spaces}. In the case where $\scrb=\{(\bbe_n,b_0),(\bbe_n,b_0),(\bbe_n,b_0),\dots\}$, we call $\san(X,A,\bbe_n)$ the \textit{$\pi_n$-wildification of $X$ at $A$}.
\end{definition}

It is straightforward to check that Conditions (1)-(3) in the previous definition do, in fact, define a topology. Typically we will identify the sets $X$ and $B_j$ with their images in $\san(X,A,\scrb)$. Moreover, Conditions (1) and (2) mean precisely that the topology of $\san(X,A,\scrb)$ is coarser than the usual weak topology with respect to the subsets $X,B_1,B_2,B_3,\dots$ each with their given topology.

\begin{example}
If $X=\{x_0\}$ contains a single point, then $\scrs(X,A,\scrb)=\sw_{j\in\bbn}B_j$.
\end{example}

For the remainder of this subsection, we use the notation $X$, $A=\{a_j\}_{j\in\bbn}$, $\scrb=\{(B_j,b_j)\}_{j\in\bbn}$, exactly as we do in Definition \ref{shrinkingattachmentdef}. Typically, $A$ will be a sequence of pairwise-distinct points and $\scrb$ will be a constant sequence. When this occurs, the resulting space is independent of the enumeration of $A$ and so we may abuse notation write $A$ to denote the set $\{a_j\in X\mid j\in\bbn\}$.

\begin{proposition}\label{projectionprop}
Let $Z_m$ be the space obtained by attaching $B_1,B_2,\dots,B_m$ to $X$ by identifying $a_j\sim b_j$ (with the usual weak topology). Then the map $\phi_m:\san(X,A,\scrb)\to Z_m$ collapsing $B_j$ to $a_j$ for all $j>m$ is a continuous retraction.
\end{proposition}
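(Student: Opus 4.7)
The plan is: first define $\phi_m:\san(X,A,\scrb)\to Z_m$ to be the identity on $X$ and on each $B_j$ with $j\le m$, and to send every point of $B_j$ to $a_j\in X$ whenever $j>m$. Since the only identification between a factor $B_j$ and $X$ is $b_j\sim a_j$, and in both cases ($j\le m$ or $j>m$) the prescription sends $b_j$ to the same point as $a_j$, the map is well defined. The retraction property is automatic: $Z_m$ embeds as the subspace $X\cup B_1\cup\cdots\cup B_m$ of $\san(X,A,\scrb)$ (the weak topology on $Z_m$ agrees with the subspace topology, as one sees by enlarging any weakly open $V\subseteq Z_m$ to $V\cup\bigcup_{j>m,\,a_j\in V}B_j$ and verifying the three conditions of Definition \ref{shrinkingattachmentdef}), and $\phi_m$ fixes that subspace pointwise.

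The only real work is to verify continuity. Fix an open $U\subseteq Z_m$ and put $W:=\phi_m^{-1}(U)$. By the weak topology on $Z_m$, $U\cap X$ is open in $X$ and $U\cap B_j$ is open in $B_j$ for each $j\le m$. Conditions (1) and (2) of Definition \ref{shrinkingattachmentdef} for $W$ are then immediate: $W\cap X=U\cap X$, for $j\le m$ one has $W\cap B_j=U\cap B_j$, and for $j>m$ the set $W\cap B_j$ is either all of $B_j$ (when $a_j\in U$) or empty (when $a_j\notin U$).

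The substantive step is condition (3). Suppose $x\in W\cap X$ and $j_1<j_2<\cdots$ is a strictly increasing sequence of indices with $a_{j_i}\to x$ in $X$. Strict monotonicity forces $j_i>m$ for all but finitely many $i$, and for such $i$ the map $\phi_m$ collapses $B_{j_i}$ to the single point $a_{j_i}$; so $B_{j_i}\subseteq W$ reduces to the condition $a_{j_i}\in U$. Since $U\cap X$ is open in $X$ and contains $x$, convergence $a_{j_i}\to x$ in $X$ gives $a_{j_i}\in U$ for all but finitely many $i$, establishing (3). The main obstacle, such as it is, lies precisely in this reduction: one must notice that the strict monotonicity of the $j_i$ throws the tail of the sequence into the regime where $\phi_m$ is a constant-collapse map, so the topology of the $B_j$ becomes irrelevant and openness of $U\cap X$ in $X$ alone closes out the verification.
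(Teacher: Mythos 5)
Your proposal is correct and follows essentially the same route as the paper: compute $\phi_m^{-1}(U)$ explicitly, observe that Conditions (1) and (2) of Definition \ref{shrinkingattachmentdef} are immediate from the weak topology on $Z_m$, and verify Condition (3) using that $a_{j_i}\in U\cap X$ eventually. Your explicit remark that strict monotonicity of the $j_i$ pushes the tail of the sequence into the regime $j_i>m$ (where $B_{j_i}\subseteq\phi_m^{-1}(U)$ follows from $a_{j_i}\in U$ alone) is a point the paper leaves implicit, but the argument is the same.
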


\begin{proof}
The inclusion function $Z_m\to \san(X,A,\scrb)$ is continuous by Conditions (1) and (2) defining the topology of $\san(X,A,\scrb)$. We check that $\phi_m$ is continuous. Let $V\subseteq Z_m$ be open. Then \[U=\phi_{m}^{-1}(V)=(V\cap X)\cup \bigcup\{B_j\mid a_j\in V\text{ and }j>m\}\cup \bigcup\{V\cap B_j\mid 1\leq j\leq m\}.\] Since $Z_m$ has the weak topology, it is clear that $U$ satisfies Conditions (1) and (2). Suppose $x\in U$ and $j_1<j_2<j_3<\cdots$ is such that $\{a_{j_i}\}_{i\in\bbn}\to x$ in $X$. Then there exists $i_0$ such that $a_{j_i}\in U\cap X=V\cap X$ for all $i\geq i_0$. By our description of $U$ above, it follows that $B_{j_i}\subseteq U$ for all but finitely many $i\in\bbn$. Thus $U$ satisfies Condition (3) and we conclude that $U$ is open in $\san(X,A,\scrb)$.
\end{proof}

\begin{remark}\label{inverselimitremark}
Let $Z_m$ be defined as in Proposition \ref{projectionprop}. For each $m\in \bbn$, there is a map $\phi_{m+1,m}:Z_{m+1}\to Z_m$, which collapses $B_{m+1}$ to $a_{m+1}$. Let $\varprojlim_{m}(Z_m,\phi_{m+1,m})$ be the inverse limit space, denoted more succinctly as $\varprojlim_{m}Z_m$. The maps $\psi_m:\san(X,A,\scrb)\to Z_m$ from Proposition \ref{projectionprop} agree with the bonding maps $\phi_{m+1,m}$ and induce a continuous bijection $\psi:\san(X,A,\scrb)\to \varprojlim_{m}Z_m$ given by $\psi(x)=(\phi_m(x))_{m\in\bbn}$.
\end{remark}

Note that the construction of $\san(X,A,\scrb)$ is only intended to be useful when $X$ is compact since if a sequence $a_{j_1},a_{j_2},a_{j_3},\dots$ does not have a convergent subsequence then all of the corresponding attached spaces $B_{j_i}$ will be ``large." However, this construction does allow us to attach spaces in a shrinking fashion without appealing to a uniform structure such as a metric.

\begin{proposition}\label{compactprop}
If $X$ and each $B_j\in\scrb$ is compact, then so is $\san(X,A,\scrb)$.
\end{proposition}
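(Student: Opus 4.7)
The plan is to argue directly by extracting a finite subcover. Let $\mcu$ be an open cover of $\san(X,A,\scrb)$, and for each $x\in X$ choose some $U_x\in\mcu$ with $x\in U_x$. The crux is a technical strengthening of condition (3) of Definition \ref{shrinkingattachmentdef}: I claim there exist an open neighborhood $V_x$ of $x$ in $X$ and a finite set $F_x\subseteq\bbn$ such that $a_j\in V_x$ and $j\notin F_x$ imply $B_j\subseteq U_x$. If this failed, then using a countable neighborhood base of $x$ in $X$ one could inductively extract a strictly increasing sequence $j_1<j_2<\cdots$ with $a_{j_i}\to x$ in $X$ and $B_{j_i}\not\subseteq U_x$, in direct violation of condition (3) for the open set $U_x$. (This step invokes first countability of $X$, which holds in the intended applications where $X$ is compact metric.)

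With this claim in hand, I would invoke compactness of $X$ to extract a finite subcover $V_{x_1},\ldots,V_{x_k}$ of $X$, and set $F=F_{x_1}\cup\cdots\cup F_{x_k}$, which is a finite subset of $\bbn$. Any index $j\notin F$ satisfies $a_j\in V_{x_i}$ for some $i$, whence $B_j\subseteq U_{x_i}$, so every such $B_j$ is absorbed by $U_{x_1},\ldots,U_{x_k}$. For each of the finitely many exceptional indices $j\in F$, the compact space $B_j$ admits a finite subcover by $\mcu$. Concatenating $U_{x_1},\ldots,U_{x_k}$ with these finite subcovers of the exceptional $B_j$'s yields the desired finite subcover of $\san(X,A,\scrb)$.

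The main obstacle is the technical strengthening of condition (3); this is the step where the sequential nature of (3) must be promoted to a statement about a single neighborhood, and first countability of $X$ is what makes this promotion straightforward. A conceptually cleaner alternative is to exploit Remark \ref{inverselimitremark}: each $Z_m$ is a one-point pushout of the compact Hausdorff spaces $X,B_1,\ldots,B_m$ and is therefore compact Hausdorff, so $\varprojlim_m Z_m$ is compact Hausdorff as a closed subspace of $\prod_m Z_m$; one then shows that the continuous bijection $\psi$ is open by producing, for each $y$ in an open set $U\subseteq \san(X,A,\scrb)$, a basic open cylinder $\pi_m^{-1}(W)$ through $\psi(y)$ contained in $\psi(U)$. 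The only delicate case is $y\in X$, where the same use of condition (3) reappears, so the two approaches ultimately share the essential work.
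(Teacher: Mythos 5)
Your argument is essentially the paper's: both proofs reduce to covering $X$ by finitely many members of $\mcu$ and then using Condition (3) of Definition \ref{shrinkingattachmentdef} to show that all but finitely many $B_j$ are absorbed by those members, after which the finitely many exceptional $B_j$ are handled by their own compactness. One step needs repair as written: your final collection $U_{x_1},\dots,U_{x_k}$ need not cover $X$, because you only arrange that the sets $V_{x_i}$ cover $X$ and nothing forces $V_x\subseteq U_x$. The fix is immediate -- since $U_x\cap X$ is open in $X$ by Condition (1), shrink $V_x$ (equivalently, run your neighborhood base inside $U_x\cap X$) so that $V_x\subseteq U_x$; the absorption claim is unaffected and then $\bigcup_i U_{x_i}\supseteq\bigcup_i V_{x_i}\supseteq X$. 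Concerning your explicit appeal to first countability of $X$: this is indeed a hypothesis beyond bare compactness, but the paper's own proof makes an equivalent appeal, extracting a convergent subsequence from $\{a_{j_i}\}$ and hence using sequential compactness of $X$, which also does not follow from compactness of a Hausdorff space; in the intended applications $X$ is compact metric and both versions go through. The only organizational difference is that the paper applies Condition (3) once to the union $U=\bigcup_{i=1}^{r}U_i$ of a finite subcover of $X$, whereas you localize the absorption statement at each point and then compactify; the underlying idea is the same.
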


\begin{proof}
Let $\scru$ be an open cover of $\san(X,A,\scrb)$. Since $X$ is compact, find $U_1,U_2,\dots,U_r\in\scru$ such that $X\subseteq U=\bigcup_{i=1}^{r}U_i$. Since each $B_j$ is compact, it suffices to show that all but finitely many $B_j$ lie in $U$. Suppose that $j_1<j_2<j_3<\cdots$ are such that $B_{j_i}\nsubseteq U$. Since $X$ is compact, we may replace $\{j_i\}$ with a subsequence so that $\{a_{j_i}\}_{j\in\bbn}$ converges to a point $x\in X$. But Condition (3) in Definition \ref{shrinkingattachmentdef} then implies that $B_{j_i}\subseteq U$ for sufficiently large $i$; a contradiction.
\end{proof}

Since all spaces are assumed to be Hausdorff, Remark \ref{inverselimitremark} and Proposition \ref{compactprop} combine to give the following.

\begin{corollary}\label{invlimitcor}
If $X$ and each $B_j\in\scrb$ is compact and $Z_m$ is defined as in Proposition \ref{projectionprop}, then the induced map $\phi:\san(X,A,\scrb)\to \varprojlim_{m}Z_m$ is a homeomorphism.
\end{corollary}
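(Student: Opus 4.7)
The plan is to reduce the statement to the classical fact that a continuous bijection from a compact space to a Hausdorff space is automatically a homeomorphism. The necessary ingredients are already in place in the preceding results, so the argument should be short.

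First, I would observe that by Remark \ref{inverselimitremark}, the map $\phi:\san(X,A,\scrb)\to \varprojlim_{m}Z_m$ defined by $\phi(x)=(\phi_m(x))_{m\in\bbn}$ is a continuous bijection. Second, by Proposition \ref{compactprop}, the domain $\san(X,A,\scrb)$ is compact (since $X$ and each $B_j$ are assumed compact). Third, I would verify that the codomain is Hausdorff: each $Z_m$ is a quotient of the compact Hausdorff space $X\sqcup B_1\sqcup\cdots\sqcup B_m$ by the identifications $a_j\sim b_j$ for $j\leq m$, and since only finitely many identifications of single points occur, $Z_m$ is Hausdorff. The inverse limit $\varprojlim_{m}Z_m$ sits as a closed subspace of the product $\prod_{m}Z_m$, which is Hausdorff, so $\varprojlim_{m}Z_m$ is Hausdorff as well.

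With these three points established, the conclusion follows immediately: a continuous bijection from a compact space to a Hausdorff space sends closed sets to closed sets (closed subsets of compact spaces are compact, the image is compact, and compact subsets of Hausdorff spaces are closed), hence its inverse is continuous, so $\phi$ is a homeomorphism.

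There is no real obstacle here; the entire content of the corollary is that the construction of $\san(X,A,\scrb)$ in Definition \ref{shrinkingattachmentdef} gives back precisely the inverse limit topology in the compact setting, and this is forced purely by the compact-to-Hausdorff principle once Remark \ref{inverselimitremark} and Proposition \ref{compactprop} are in hand. The only point meriting a line of care is the Hausdorff property of each $Z_m$, which I would address explicitly since the topology on $\san(X,A,\scrb)$ is a priori coarser than the weak topology used to define $Z_m$.
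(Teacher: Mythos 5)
Your proposal is correct and follows exactly the route the paper intends: the paper derives the corollary from Remark \ref{inverselimitremark} (continuous bijection), Proposition \ref{compactprop} (compactness of the domain), and the standing Hausdorff assumption, via the compact-to-Hausdorff principle. Your explicit verification that each $Z_m$, and hence $\varprojlim_m Z_m$, is Hausdorff is a reasonable detail to include, though the paper subsumes it under its blanket Hausdorff convention.
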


\begin{proposition}\label{propertiesprop}
If $X$ and each $B_j$ is separable (resp. path-connected, path-connected and locally path-connected), then so is $\san(X,A,\scrb)$.
\end{proposition}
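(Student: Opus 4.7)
The plan is to verify the three properties in turn, with the main effort concentrated on local path-connectivity since separability and path-connectivity are essentially transparent from the definition of the topology.

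For separability, I would pick a countable dense subset $D \subseteq X$ and countable dense subsets $D_j \subseteq B_j$ for each $j \in \bbn$, and show that $D \cup \bigcup_{j \in \bbn} D_j$ is dense in $\san(X,A,\scrb)$. Given any non-empty open $U \subseteq \san(X,A,\scrb)$, Condition~(1) forces $U \cap X$ open in $X$ and Condition~(2) forces $U \cap B_j$ open in $B_j$; whichever intersection is non-empty supplies a point of the countable dense set. For path-connectivity, note that Condition~(2) in the topology definition says the inclusions $B_j \hookrightarrow \san(X,A,\scrb)$ are continuous, so any point of $B_j$ may be joined by a path to $b_j = a_j \in X$; combined with path-connectivity of $X$, this gives path-connectivity of the whole space.

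For local path-connectivity, I would split into cases by where the point lies. If $p \in B_j \setminus \{b_j\}$, the set $B_j \setminus \{b_j\}$ is open in $\san(X,A,\scrb)$ (Conditions (1)--(3) are easy to verify), so open path-connected neighborhoods of $p$ in $B_j$ avoiding $b_j$ remain open in $\san(X,A,\scrb)$. The harder case is $p \in X$, where I need to exhibit, inside an arbitrary open nhbd $U$ of $p$, a path-connected open nhbd. Using local path-connectedness of $X$, choose a path-connected open $W \subseteq U \cap X$ with $p \in W$. For each index $j$ with $a_j \in W$, let $V_j \subseteq B_j$ be the path-component of $U \cap B_j$ containing $b_j$; this is open in $B_j$ by local path-connectedness of $B_j$. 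Then set
\[
V \;=\; W \;\cup\; \bigcup_{j \,:\, a_j \in W} V_j.
\]

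The key step is verifying that $V$ is open in $\san(X,A,\scrb)$. Conditions (1) and (2) are built in: $V \cap X = W$ and $V \cap B_j = V_j$ (or $\emptyset$). The nontrivial piece is Condition~(3): if $x \in V \cap X = W$ and $a_{j_i} \to x$ with $j_1 < j_2 < \cdots$, then for large $i$ one has $a_{j_i} \in W$ (because $W$ is open in $X$) and, since $U$ itself satisfies Condition~(3), $B_{j_i} \subseteq U$ for large $i$. Hence $U \cap B_{j_i} = B_{j_i}$, which is path-connected by hypothesis, so $V_{j_i} = B_{j_i}$ and $B_{j_i} \subseteq V$. This is the one place the full strength of the hypothesis, i.e. path-connectivity of each $B_j$, is needed; the rest is bookkeeping. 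Finally, $V$ is path-connected because $W$ is path-connected and each $V_j$ is a path-connected set meeting $W$ at $a_j = b_j$, so the whole union is path-connected and contained in $U$. I expect Condition~(3) for $V$ to be the only subtle point, and the path-connectivity of $B_j$ is exactly the hypothesis that makes it go through.
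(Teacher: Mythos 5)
Your proof is correct and follows essentially the same route as the paper: the only substantive case is local path-connectivity at points of $X$, where you build the same neighborhood $V = W \cup \bigcup_j V_j$ and verify Condition (3) exactly as the paper does, using path-connectivity of the $B_j$ to get $V_{j_i} = B_{j_i} \subseteq V$ for large $i$. The separability step differs cosmetically (you exhibit a countable dense set directly, the paper observes $\san(X,A,\scrb)$ is a continuous image of the separable coproduct), but both are immediate.
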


\begin{proof}
If $X$ and each $B_j$ are separable, then the coproduct $X\sqcup \coprod_{j\in\bbn}B_j$ is separable. Since the topology of $\san(X,A,\scrb)$ is coarser than the weak topology, it is the continuous image of $X\sqcup \coprod_{j\in\bbn}B_j$ and is therefore separable.

If $X$ and each $B_j$ are path-connected, it is clear that $\san(X,A,\scrb)$ is path-connected. Lastly, suppose $X$ and each $B_j$ are both path-connected and locally path-connected. As noted, $\san(X,A,\scrb)$ is path-connected. Since $B_j\backslash\{b_j\}$ is locally path-connected and open in $\san(X,A,\scrb)$, it suffices to check that $\san(X,A,\scrb)$ is locally path-connected at each point in $X$. Let $x\in X$ and $U$ be an open neighborhood of $x$ in $\san(X,A,\scrb)$. Let $U_0=X\cap U$ and $U_j=U\cap B_j$ for $j\in\bbn$. Find a path-connected neighborhood $V_0$ of $x$ in $X$ such that $V_0\subseteq U_0$. Let $J=\{j\in\bbn\mid a_j\in V_0\}$. If $j\in J$ and $U_j=B_j$, set $V_j=B_j$. If $j\in J$ and $U_j\neq B_j$, find a path-connected neighborhood $V_j$ of $a_j$ in $B_j$ such that $V_j\subseteq U_j.$ Define $V=V_0\cup \bigcup_{j\in J}V_j$. Certainly, $V$ is path-connected and $V\subseteq U$. It suffices to check that $V$ is open in $\san(X,A,\scrb)$. Conditions (1) and (2) of Definition \ref{shrinkingattachmentdef} are met. We check Condition (3). Suppose $v\in V\cap X$ and $k_1<k_2<k_3<\cdots $ are integers such that $\{a_{k_i}\}_{i\in\bbn}$ converges to $v$. Since $v\in U$ and $U$ is open, we have $B_{k_i}\subseteq U$ for all but finitely many $i$. When $B_{k_i}=U$, we have $V_{k_i}=U_{k_i}=B_{k_i}$. Thus $B_{k_i}\subseteq V$ for all but finitely many $i$.
\end{proof}

\begin{proposition}\label{peanoprop}
If $X$ and each $B_j\in\scrb$ is a compact Haudsorff space (respectively, a compact metric space, an $n$-dimensional compact metric space, a Peano continuum, an $n$-dimensional Peano continuum), then so is $\san(X,A,\scrb)$. 
\end{proposition}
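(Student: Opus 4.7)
The plan is to reduce all five claims to the inverse-limit description $\san(X,A,\scrb)\cong \varprojlim_{m}Z_m$ furnished by Corollary \ref{invlimitcor}, where $Z_m$ attaches $B_1,\dots,B_m$ to $X$ with the usual weak topology. Since $Z_m$ is the quotient of the compact Hausdorff (respectively, compact metric) disjoint union $X\sqcup B_1\sqcup\cdots\sqcup B_m$ by the closed equivalence relation identifying only the finitely many pairs $\{a_j,b_j\}$ with $j\leq m$, the standard theory of pushouts in $\spaces$ shows each $Z_m$ already enjoys whichever property is hypothesized of $X$ and the $B_j$. Thus the entire difficulty lies in transporting each property through the countable inverse limit.

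For compactness the work is already done by Proposition \ref{compactprop}. For Hausdorffness, I would observe that each $Z_m$ is Hausdorff and that $\varprojlim_{m}Z_m$ is Hausdorff as a subspace of the Hausdorff product $\prod_{m}Z_m$; equivalently, one can separate two points of $\san(X,A,\scrb)$ directly from the open-set recipe in Definition \ref{shrinkingattachmentdef}, using an $E(U)$-style construction modeled on that in the proof of Lemma \ref{peanocontlemma}. For metrizability in the compact metric case, I would invoke the classical fact that a countable inverse limit of compact metric spaces is compact metric, since $\varprojlim_{m}Z_m$ embeds in the countable product $\prod_{m}Z_m$ of compact metric spaces.

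For the Peano continuum case I would deliberately avoid trying to prove local connectedness through the inverse limit, since inverse limits generally destroy local connectedness. Instead, Proposition \ref{propertiesprop} already supplies path-connectedness and local path-connectedness of $\san(X,A,\scrb)$ from the path-connected and locally path-connected hypothesis on $X$ and the $B_j$. Combining this with the compact metric conclusion established above yields a Peano continuum.

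The remaining substantive step, and the likely main obstacle, is the $n$-dimensional claim. First I would verify $\dim(Z_m)\leq n$ using the sum theorem of dimension theory \cite[1.5.3]{EngelkingDimThry}: $Z_m$ is a finite union of the closed subspaces $X,B_1,\dots,B_m$, each of dimension at most $n$, so $\dim(Z_m)\leq n$. Then I would apply the classical inverse-limit inequality $\dim(\varprojlim_{m}Z_m)\leq \sup_{m}\dim(Z_m)\leq n$ for inverse systems of compact metric spaces (also found in Engelking) to conclude $\dim(\san(X,A,\scrb))\leq n$. The reverse inequality is immediate since $X$ and each $B_j$ embed as closed subspaces of $\san(X,A,\scrb)$, yielding $\dim(\san(X,A,\scrb))\geq \max\{\dim(X),\sup_j\dim(B_j)\}=n$ and hence equality.
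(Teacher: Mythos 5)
Your proposal is correct and follows essentially the same route as the paper: both reduce everything to the inverse-limit description $\san(X,A,\scrb)\cong\varprojlim_{m}Z_m$ from Corollary \ref{invlimitcor} for the compact Hausdorff and compact metric cases, and both obtain the Peano continuum case by combining this with Proposition \ref{propertiesprop}. The only divergence is the dimension step, where the paper applies the countable sum theorem \cite[Theorem 4.1.9]{EngelkingDimThry} directly to the closed countable cover $X,B_1,B_2,\dots$ of $\san(X,A,\scrb)$ instead of bounding $\dim(Z_m)$ and passing through the inverse-limit dimension inequality; both arguments are valid.
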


\begin{proof}
Define $Z_m$ as above. Since each $X$ and $B_j$ is compact Hausdorff, so is each $Z_m$. Thus the inverse limit $\varprojlim_{m}Z_m$ is compact Hausdorff. By Corollary \ref{invlimitcor}, $\san(X,A,\scrb)\cong \varprojlim_{m}Z_m$. Thus $\san(X,A,\scrb)$ is compact Hausdorff. If, in addition, $X$ and each $B_j$ are metrizable, then each $Z_m$ is metrizable. Since limits of inverse sequences are closed under metrizability, it follows that $\varprojlim_{m}Z_m$ is a compact metric space. 

Suppose $X$ and each $B_j$ is a Peano continuum. By the previous paragraph $\san(X,A,\scrb)$ is a compact metric space. By Proposition \ref{propertiesprop}, $\san(X,A,\scrb)$ is path-connected and locally path-connected. Thus $\san(X,A,\scrb)$ is a Peano continuum.

Lastly, suppose $X$ and each $B_j$ is a compact metric space of dimension $n$ (recall that under these hypotheses, the small inductive, large inductive, and covering dimensions agree). Since $X,B_1,B_2,B_3,\dots$ is a cover of $\san(X,A,\scrb)$ by $n$-dimensional spaces, the countable sum theorem  \cite[Theorem 4.1.9]{EngelkingDimThry} applies and we may conclude that $\san(X,A,\scrb)$ is a $n$-dimensional compact metric space.
\end{proof}

With several topological issues involving shrinking point-attachment spaces settled, we study the wild set of $\san(X,A,\scrb)$. Recall that we may use $A$ to denote the image of the sequence of attachment points in $X$.

\begin{lemma}\label{sanlemma}
Suppose $X$ is a Peano continuum and each $B_j$ is a non-simply connected Peano continuum. Then
\[\wildn(X)\cup A'\cup \bigcup_{j\in\bbn}\wildn(B_j)\subseteq\wildn(\san(X,A,\scrb))\subseteq\wildn(X)\cup \ov{A}\cup\bigcup_{j\in\bbn}\wildn(B_j)\]
where $A'$ denotes the set of limit points of $A$ in $X$.
\end{lemma}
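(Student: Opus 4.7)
The plan is to prove the two inclusions separately, relying on the retraction structure of $\san(X,A,\scrb)$ and a direct check against Conditions (1)--(3) of Definition \ref{shrinkingattachmentdef}. First I would verify that the map $r:\san(X,A,\scrb)\to X$ collapsing every $B_j$ to $a_j$ and, for each $j$, the map $r_j:\san(X,A,\scrb)\to B_j$ collapsing $X$ and every $B_k$ with $k\neq j$ to $b_j=a_j$ are continuous retractions; in both cases continuity reduces to a mechanical verification of the three topology conditions. Corollary \ref{retractcor} applied to these retractions immediately yields $\wildn(X)\cup\bigcup_{j\in\bbn}\wildn(B_j)\subseteq\wildn(\san(X,A,\scrb))$.

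For the remaining forward inclusion $A'\subseteq\wildn(\san(X,A,\scrb))$, fix $x\in A'$ and extract a subsequence $j_1<j_2<\cdots$ with $a_{j_i}\to x$ in $X$. Using that $X$ is a Peano continuum, I would take a descending base of path-connected open neighborhoods of $x$ in $X$ and produce paths $\alpha_i:\ui\to X$ from $x$ to $a_{j_i}$ whose images shrink to $x$. The hypothesis on each $B_{j_i}$ provides an essential based map $g_i:(S^n,s_0)\to(B_{j_i},a_{j_i})$, and path-conjugation of $g_i$ by $\alpha_i$ (via a standard collar on $S^n$ when $n\geq 2$) yields a based map $f_i:(S^n,s_0)\to(\san(X,A,\scrb),x)$. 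Applying $r_{j_i}$ collapses $\alpha_i$ to $a_{j_i}$, so $r_{j_i}\circ f_i$ is homotopic to $g_i$ in $B_{j_i}$ and therefore essential; this forces $f_i$ to be essential in $\san(X,A,\scrb)$. Convergence $f_i\to c_x$ in the compact-open topology follows from $\alpha_i$ shrinking in $X$ combined with Condition (3) of Definition \ref{shrinkingattachmentdef}, which forces $B_{j_i}\subseteq U$ for all but finitely many $i$ and every neighborhood $U$ of $x$. The exponential law of Remark \ref{mappingspaceremark} then assembles $\{f_i\}$ into the required fully essential map $(\bbe_n,b_0)\to(\san(X,A,\scrb),x)$.

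For the reverse inclusion, let $x\in\wildn(\san(X,A,\scrb))$ and fix a fully essential map $f:(\bbe_n,b_0)\to(\san(X,A,\scrb),x)$ with restrictions $f_k=f\circ\ell_k$ shrinking to $x$. If $x\in B_j\setminus\{a_j\}$ for some $j$, then $B_j\setminus\{a_j\}$ is open in $\san(X,A,\scrb)$, so for large $k$ the image of $f_k$ lies in $B_j$; the retraction $r_j$ identifies $f_k$ with an essential based map in $B_j$, whence $x\in\wildn(B_j)$. Otherwise $x\in X$, and if in addition $x\notin\ov{A}$ then there is an open neighborhood $U$ of $x$ in $X$ disjoint from $A$. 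Such a $U$ is open in $\san(X,A,\scrb)$, since Conditions (1) and (2) are immediate and Condition (3) is vacuous because no subsequence of $A$ converges to $x$. Hence for all sufficiently large $k$ each $f_k$ factors through $U\subseteq X$, and is essential in $X$ by applying $r$. Reassembling this tail via Remark \ref{mappingspaceremark} produces a fully essential map $(\bbe_n,b_0)\to(X,x)$, so $x\in\wildn(X)$.

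The main obstacle is the forward step $A'\subseteq\wildn(\san(X,A,\scrb))$: one must simultaneously exploit local path-connectedness of $X$ to build shrinking conjugating paths and invoke Condition (3) of the attachment topology to force the shrinking of the attached spaces $B_{j_i}$, while detecting essentialness of each $f_i$ by pulling back to the retract $B_{j_i}$ so that the conjugating path is collapsed.
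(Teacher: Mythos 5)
Your proposal is correct and follows essentially the same route as the paper's proof: retractions of $\san(X,A,\scrb)$ onto $X$ and onto each $B_j$ together with Corollary \ref{retractcor} for the first inclusion, path-conjugation of essential based $n$-spheres in the $B_{j_i}$ by a shrinking sequence of paths (with essentiality detected by retracting onto $B_{j_i}$) for the $A'$ part, and an open subset of $X$ avoiding the attachment points for the reverse inclusion. One small correction: for Condition (3) of Definition \ref{shrinkingattachmentdef} to be vacuous on your neighborhood $U$ it must be disjoint from $\ov{A}$, not merely from $A$, since the condition quantifies over every point of $U\cap X$ rather than just $x$; this is harmless because $x\notin\ov{A}$ lets you take $U=X\setminus\ov{A}$, which is exactly the set the paper uses.
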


\begin{proof}
Since finite and shirking wedges of non-simply connected spaces are not simply connected, we may assume that $A$ is injective and write $A=\{a_1,a_2,a_3,\dots\}$. Note that $A'$ may not contain $A$ as a subset if $A$ has isolated points. Since $X$ and each $B_j$ is a retract of $\san(X,A,\scrb)$, Corollary \ref{retractcor} gives $\wildn(X)\cup \bigcup_{j\in\bbn}\wildn(B_j)\subseteq \wildn(\san(X,A,\scrb))$. If $x\in A'$, find $j_1<j_2<j_3<\cdots$ such that $\{a_{j_i}\}_{i\in\bbn}$ converges to $x$. For each $i\in\bbn$, find an essential loop $\beta_i:\ui\to B_{j_i}$ based at $b_{j_i}$. Find a sequence of paths $\alpha_i:\ui\to X$ from $x$ to $a_{j_i}$ such that $\{\alpha_i\}_{i\in\bbn}$ converges to $x$. Define $f:(\bbe_n,b_0)\to (\san(X,A,\scrb),x)$ so that $f\circ \ell_i$ is the path-conjugate of $\beta_i$ by $\alpha_i$. Since $B_{j_i}$ is a retract of $X$ for each $i$, the map $f$ is full essential. Thus $x\in \wildn(\san(X,A,\scrb))$. This completes the proof of the first inclusion.

For the second inclusion, note that for each $j\in\bbn$, we can write $\san(X,A,\scrb)=Y_j\vee B_j$ with wedgepoint $b_j$. Corollary \ref{wedgecor} gives $\wildn(\san(X,A,\scrb))\subseteq\wildn(Y_j)\cup \wildn(B_j)\cup \{b_j\}$. Thus, $\wildn(\san(X,A,\scrb))\cap B_j\subseteq\wildn(B_j)\cup\{b_j\}\subseteq \wildn(B_j)\cup A$ for all $j\in\bbn$. To finish the proof, it suffices to show that $\wildn(\san(X,A,\scrb))\cap X\subseteq\wildn(X)\cup\ov{A}$. Since $X$ and $\san(X,A,\scrb)$ are Peano continua, Lemma \ref{closedprop} implies that both $\wildn(\san(X,A,\scrb))\cap X$ and $\wildn(X)\cup\ov{A}$ are closed in $X$. In particular, $U=X\backslash (\wildn(X)\cup\ov{A})$ is open in $X$. Since $U$ does not contain any subsequential limit of attachment points, $U$ vacuously satisfies Conditions (2) and (3) of Definition \ref{shrinkingattachmentdef} and thus $U$ is open in $\san(X,A,\scrb)$. If $x\in (\wildn(\san(X,A,\scrb))\cap X)\backslash (\wildn(X)\cup\ov{A})$, then $x\in U$ and we can find a fully essential map $f:(\bbe_n,b_0)\to (\san(X,A,\scrb),x)$ with restriction $f_k=f\circ\ell_k$ to the $k$-th sphere. Since $U$ is open in $\san(X,A,\scrb)$, we may restrict $f$ to a cofinal sequence of spheres and, therefore, assume that $f(\bbe_n)\subseteq U$. Since each $f_k$ has image in $X$ and is essential in $\san(X,A,\scrb)$, each $f_k$ must be essential in $X$. Therefore, $x\in \wildn(X)$; a contradiction. This completes the proof of the second inclusion.
\end{proof}

\begin{corollary}\label{sancor}
Suppose $X$ is a Peano continuum and each $B_j$ is a non-simply connected Peano continuum. If $b_j\in\wildn(B_j)$ for each $j\in\bbn$, then $\wildn(\san(X,A,\scrb))=\wildn(X)\cup \ov{A}\cup \bigcup_{j\in\bbn}(\wildn(B_j))$. Moreover, if $A$ is dense in $X$, then $\wildn(\san(X,A,\scrb))=X\cup \bigcup_{j\in\bbn}\wildn(B_j)$.
\end{corollary}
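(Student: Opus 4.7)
The plan is to deduce Corollary \ref{sancor} directly from Lemma \ref{sanlemma} by showing that, under the added hypothesis $b_j\in\wildn(B_j)$, the two sides of the double inclusion
\[\wildn(X)\cup A'\cup \bigcup_{j\in\bbn}\wildn(B_j)\ \subseteq\ \wildn(\san(X,A,\scrb))\ \subseteq\ \wildn(X)\cup \ov{A}\cup\bigcup_{j\in\bbn}\wildn(B_j)\]
actually coincide. Since $\ov{A}=A\cup A'$, the only potential gap between the two sides is the set of isolated points of $A$, so it suffices to verify that $A$ itself is already contained in $\wildn(\san(X,A,\scrb))$.

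For that, I would recall that under the identification $a_j\sim b_j$, the point $a_j\in X$ and the basepoint $b_j\in B_j$ become the same point of $\san(X,A,\scrb)$. By Proposition \ref{projectionprop} (or equivalently Proposition \ref{sanlemma}'s retraction onto each $B_j$), each $B_j$ is a retract of $\san(X,A,\scrb)$, so Corollary \ref{retractcor} gives $\wildn(B_j)\subseteq \wildn(\san(X,A,\scrb))$. The hypothesis $b_j\in\wildn(B_j)$ then places $a_j$ in $\wildn(\san(X,A,\scrb))$ for every $j$; thus $A\subseteq\bigcup_{j\in\bbn}\wildn(B_j)\subseteq\wildn(\san(X,A,\scrb))$. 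Combined with $A'\subseteq \wildn(\san(X,A,\scrb))$ from Lemma \ref{sanlemma}, this shows $\ov{A}\subseteq \wildn(\san(X,A,\scrb))$, which upgrades the left inclusion of Lemma \ref{sanlemma} to
\[\wildn(X)\cup \ov{A}\cup \bigcup_{j\in\bbn}\wildn(B_j)\subseteq \wildn(\san(X,A,\scrb)).\]
Together with the right inclusion from Lemma \ref{sanlemma}, this proves the first equality.

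For the moreover clause, if $A$ is dense in $X$ then $\ov{A}=X$ and the established equality becomes
\[\wildn(\san(X,A,\scrb))=\wildn(X)\cup X\cup \bigcup_{j\in\bbn}\wildn(B_j)=X\cup \bigcup_{j\in\bbn}\wildn(B_j),\]
where the last step uses $\wildn(X)\subseteq X$. There is no real obstacle here; the only subtle point is the observation that the wildness at attachment points $a_j$ is inherited from $B_j$ via the retraction, which is precisely what the hypothesis $b_j\in\wildn(B_j)$ is designed to supply. Everything else is bookkeeping on top of Lemma \ref{sanlemma} and Corollary \ref{retractcor}.
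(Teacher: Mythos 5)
Your proposal is correct and matches the paper's own argument: both observe that under the identification $a_j\sim b_j$, the hypothesis $b_j\in\wildn(B_j)$ puts $A$ inside $\bigcup_{j}\wildn(B_j)$, so $\ov{A}=A\cup A'$ is absorbed into the lower bound of Lemma \ref{sanlemma}, and the upper bound then forces equality. The moreover clause via $\ov{A}=X$ is likewise exactly the paper's reasoning.
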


\begin{proof}
If $b_j\in \wildn(B_j)$ for each $j\in\bbn$, then $A\subseteq \bigcup_{j\in\bbn}\wildn(B_j)$ Thus $\ov{A}=A\cup A'\subseteq \san(X,A,\scrb)$ by the first inequality of Lemma \ref{sanlemma}. Applying the second inequality from Lemma \ref{sanlemma} completes the proof.
\end{proof}

\begin{remark}
The purpose of the $\pi_n$-wildification construction is to make each point of $A$ a $\pi_n$-wild point if it is not one already. We choose to use the space $B=\bbe_n$ instead of $S^n$ in our definition of $\pi_n$-wildification because the image of the sequence $A$ may have isolated points. In particular, if $a$ is an isolated point of $\im(A)$, $A^{-1}(a)$ is finite, and $a\in X\backslash\wildn(X)$, then $a$ will not be a $\pi_n$-wild point of $\san(X,A,S^n)$. However, in the case that $A$ is dense, Lemma \ref{sanlemma} implies that $\wildn(\san(X,A,S^n))=\wildn(\san(X,A,\bbe_n))=X$. In fact, the following can be proved with modest effort: If $X$ is a Peano continuum and $A\subseteq X$ is dense, then there is a homotopy equivalence $f:\san(X,A,S^n)\to \san(X,A,\bbe_n)$ that is the identity on $X$. We do not require this result and the proof is a divergence from our focus on $\pi_n$-wild sets so we do not give it here. 
\end{remark}

\begin{lemma}\label{peanocontlemma2}
For every finite-dimensional Peano continuum $Y$ and closed subspace $X\subseteq Y$, there exists a Peano continuum $Z$ such that
\begin{enumerate}
\item $Y$ is a retract of $Z$ where $Z\backslash Y$ is a disjoint union of countably many open $n$-cells,
\item $\wildn(Z)=X\cup \wildn(Y)$,
\item $\dim(Z)= \max\{n,\dim(Y)\}$.
\end{enumerate}
\end{lemma}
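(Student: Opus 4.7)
The plan is to apply the $\pi_n$-wildification construction of Definition \ref{shrinkingattachmentdef} to the pair $(Y, X)$. Since $Y$ is a Peano continuum it is separable, and hence so is its closed subspace $X$. Choose a countable dense subset $A = \{a_1, a_2, \ldots\}$ of $X$, enumerated so that every point of $X$ is a limit of a subsequence of $A$ in $Y$: take $A$ injective when $X$ is infinite, and in the finite case enumerate the points of $X$ infinitely often so that $A$ remains dense and the shrinking structure is nontrivial. Define
$$Z := \san(Y, A, \bbe_n).$$

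I would then verify the three conditions in turn. That $Z$ is a Peano continuum is immediate from Proposition \ref{peanoprop} applied to the Peano continua $Y$ and $\bbe_n$. Condition (1) follows from the canonical decomposition $\bbe_n \setminus \{b_0\} \cong \bigsqcup_{k \in \bbn}(S^n \setminus \{s_0\})$, which exhibits $Z \setminus Y = \bigsqcup_{j \in \bbn}(\bbe_n \setminus \{b_j\})$ as a countable disjoint union of open $n$-cells; the retraction $r: Z \to Y$ collapsing each $B_j = \bbe_n$ to the attachment point $a_j$ is continuous by a direct check that, for $V$ open in $Y$, the set $r^{-1}(V) = V \cup \bigcup\{B_j : a_j \in V\}$ satisfies the three openness conditions defining the topology on $\san(Y, A, \bbe_n)$. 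For condition (2), I apply Corollary \ref{sancor} with each $B_j = \bbe_n$; since $\wildn(\bbe_n) = \{b_0\}$, the wild-basepoint hypothesis $b_j \in \wildn(B_j)$ is met, giving $\wildn(Z) = \wildn(Y) \cup \ov{A}$, and density of $A$ in the closed set $X$ yields $\ov{A} = X$. For condition (3), the countable sum theorem in dimension theory, applied to the closed cover $\{Y, B_1, B_2, \ldots\}$ of $Z$, bounds $\dim(Z) \leq \max\{n, \dim(Y)\}$; the reverse inequality is immediate from the embeddings $Y \hookrightarrow Z$ and $S^n \subseteq \bbe_n \hookrightarrow Z$.

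The main technical point that needs care is the application of Corollary \ref{sancor}: as stated, it requires each $B_j$ to be non-simply connected, which fails for $B_j = \bbe_n$ when $n \geq 2$. However, inspection of the proof of Lemma \ref{sanlemma} shows that this hypothesis is used only to reduce, via shrinking-wedge identifications at repeated attachment points, to the case of an injective attachment sequence. Since I take $A$ injective from the outset (or, in the finite case, reduce to a finite disjoint wedge attachment handled directly via Corollary \ref{wedgecor}), this reduction is unnecessary, and the remaining content of the lemma and corollary applies verbatim. The only other delicate verification is the continuity of $r$ at points of $Y$, where the shrinking condition (3) of Definition \ref{shrinkingattachmentdef} must be checked for $r^{-1}(V)$; this is a short argument invoking the openness of $V$ in $Y$ together with convergence of sequences $a_{j_i} \to v \in V$.
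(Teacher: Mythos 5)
Your proof follows the paper's argument essentially verbatim: the paper likewise sets $Z=\san(Y,A,\bbe_n)$ for a countable dense subset $A$ of $X$, deduces (1) from the construction, (2) from Lemma \ref{sanlemma}, and (3) from the countable sum theorem together with the fact that $Z$ is the union of $Y$ and open $n$-cells. Your observation that the ``non-simply connected'' hypothesis of Lemma \ref{sanlemma} and Corollary \ref{sancor} is not literally satisfied by $B_j=\bbe_n$ when $n\geq 2$ is a fair catch --- the paper cites the lemma without comment --- and your resolution (take $A$ injective so the reduction step is unnecessary, and note that the rest of that proof only needs essential based maps $S^n\to B_j$, which $\bbe_n$ supplies) is exactly the right repair.
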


\begin{proof}
Let $A$ be a countable dense subset of $X$ and $Z=\san(Y,A,\bbe_n)$ be the $\pi_n$-wildification as described in Section \ref{section5makingspaceswild}. (1) is clear from the construction of $\san(Y,A,\bbe_n)$. (2) follows from Lemma \ref{sanlemma}. For (3), we note that $\max\{n,\dim(Y)\}\leq \dim(Z)$ since $Z$ is the union of $Y$ and open $n$-cells. Another application of the Sum Theorem \cite[1.5.3]{EngelkingDimThry} gives $\dim(Z)\leq \max\{n,\dim(Y)\}$.
\end{proof}

\begin{proof}[Proof of Theorem \ref{thm1}]
Using Lemma \ref{peanocontlemma}, find a Peano continuum $Y$ such that $X\subseteq Y$, $Y\backslash X$ is a countable disjoint union of open $1$-cells and $\dim(Y)= \max\{1,\dim(X)\}$. Applying the construction in the proof of \ref{peanocontlemma2}, we obtain a Peano continuum $Z=\san(Y,A,\bbe_n)$ where $Z\backslash Y$ is a disjoint union of countably many open $n$-cells, $Y$ is a retract of $Z$, $\wildn(Z)=X$, and $\dim(Z)= \max\{n,\dim(Y)\}$. Our use of $\pi_n$-wildification ensures that Conclusions (1) and (2) hold. Since $\dim(Z)= \max\{n,\dim(Y)\}=\max\{n,1,\dim(X)\}=\max\{n,\dim(X)\}$, (3) holds.
\end{proof}

Whenever $Z$ is a Peano continuum, $\wildn(Z)$ is a compact metric space by Corollary \ref{closedcor}. Therefore, we have the following.

\begin{corollary}
A space $X$ is an ($n$-dimensional) compact metric space if and only if it is the $\pi_n$-wild set of some ($n$-dimensional) Peano continuum.
\end{corollary}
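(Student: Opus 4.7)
The plan is to obtain this corollary as an immediate consequence of two results already established in this section: Corollary \ref{closedcor} supplies the necessity, and Theorem \ref{thm1} supplies the sufficiency. There is no substantial new content, so the whole task is to verify that the dimensional qualifier in parentheses is compatible on both sides of the biconditional.

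For the forward (necessity) direction, I would start from a $\pi_n$-wild set, i.e.\ assume $X = \wildn(Y)$ for some Peano continuum $Y$. Corollary \ref{closedcor} immediately gives that $X$ is a compact metrizable space, which is the unqualified direction. For the $n$-dimensional version, assume additionally that $Y$ is at most $n$-dimensional; since $X = \wildn(Y)$ is a (closed) subspace of $Y$, the subspace monotonicity of covering dimension for separable metric spaces (see \cite[1.1.2]{EngelkingDimThry}) yields $\dim(X)\leq \dim(Y)\leq n$.

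For the backward (sufficiency) direction, start with a compact metric space $X$ and apply Theorem \ref{thm1} to produce a Peano continuum $Y$ with $\wildn(Y)=X$, which handles the unqualified statement. For the $n$-dimensional version, suppose $\dim(X)\leq n$ and apply Theorem \ref{thm1}(3), which yields $\dim(Y) = \max\{\dim(X),n\} = n$. Hence the realizing Peano continuum is itself $n$-dimensional, matching the hypothesis on $X$.

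The only real step requiring any thought is confirming that the dimensional qualifier lines up: one needs $\dim(X)\leq n\Rightarrow \dim(Y)=n$ from the formula in Theorem \ref{thm1}(3), and conversely $\dim(Y)\leq n\Rightarrow \dim(\wildn(Y))\leq n$ from the subspace theorem. Neither step presents any genuine obstacle, so I would write the proof as a short one-paragraph deduction citing Theorem \ref{thm1} and Corollary \ref{closedcor}, with a parenthetical remark handling the dimensional version via the formula $\dim(Y)=\max\{\dim(X),n\}$ and the dimension-theoretic fact that subspaces of separable metric spaces do not increase covering dimension.
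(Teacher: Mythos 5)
Your proposal is correct and matches the paper's own (one-sentence) justification exactly: the forward direction is Corollary \ref{closedcor} together with monotonicity of dimension on subspaces, and the converse is Theorem \ref{thm1} with its formula $\dim(Y)=\max\{\dim(X),n\}$. The only content you add beyond what the paper records is the explicit dimension bookkeeping, which is handled correctly (reading ``$n$-dimensional'' as ``of dimension at most $n$,'' as the biconditional requires).
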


While the $\pi_n$-wild set of a compact metric space must be separable, recall from Example \ref{sinecurveexample} that it need not be compact. The authors do not know if every separable metric space is the $\pi_n$-wild set of some compact metric space. 

\begin{problem}
Characterize the class of spaces consisting of $\pi_n$-wild sets of compact metrizable spaces.
\end{problem}

\section{Rigidity of wild sets}\label{section6rigidity}

Corollary \ref{tpdcor} identifies a situation where the \textit{homeomorphism type} of $\wildn(X)$ is an invariant of the homotopy type of $X$. The same type of rigidity is also known to occur in other situations.

\begin{definition}\label{rigid}
We say that a space $X$ is \textit{$\pi_n$-rigid at} $x\in X$ if there exists a fully essential map $f:(\bbe_n,b_0)\to (X,x)$ with the property that for any map $F:\bbe_n\times \ui\to X$ extending $f$ by $F(a,0)=f(a)$, we have $F(b_0,1)=x$. Let \[\rgn(X)=\{x\in X\mid X\text{ is }\pi_n\text{-rigid at }x\}.\]
We say a space $X$ is \textit{completely $\pi_n$-rigid} if $\rgn(X)=\wildn(X)$.
\end{definition}

Intuitively, we have $x\in \rgn(X)$ if there exists a fully essential map $f:(\bbe_n,b_0)\to (X,x)$ that cannot be freely homotoped in a fashion that moves the basepoint away from $x$. Certainly, $\rgn(X)\subseteq \wildn(X)$.


\begin{example}
If $\wildn(X)$ is non-empty and totally path-disconnected, then $\rgn(X)=\wildn(X)$. Indeed, in such a space, any map $g:\bbe_n\times \ui\to X$ for which $g(x,0):\bbe_n\to X$ is fully essential must map $\{b_0\}\times \ui$ to $g(b_0,0)$. In particular, $g(b_0,1)=g(b_0,0)$ showing $x\in \rgn(X)$. 
\end{example}

\begin{remark}
In \cite[Definition 9.2]{BFPantsspace}, a space $X$ is said to have the \textit{discrete monodromy (DM) property} if for every path $\beta:\ui\to X$ that is not an inessential loop, there exists neighborhoods $U$ of $\beta(0)$ and $V$ of $\beta(1)$ such that if $\gamma\in \Omega(U,\beta(0))$ and $\delta\in \Omega(V,\beta(1))$ satisfy path-homotopy relation $\gamma\simeq \beta\cdot\delta\cdot \beta^{-}$, then $\gamma$ and $\delta$ must be inessential. Certainly, the DM-property implies the completely $\pi_1$-rigid property. However, the DM Property also implies the homotopically Hausdorff property \cite[Corollary 9.12]{BFPantsspace} because of the non-trivial case where $\beta$ is an \textit{essential} loop. On the other hand, the completely $\pi_1$-rigid property does not imply the homotopically Hausdorff property. For example, the Griffths double cone (Example \ref{wedgeexample}) is not homotopically Hausdorff but has a single $\pi_1$-wild point and is therefore is completely $\pi_1$-rigid according to the previous example.
\end{remark}

\begin{proposition}\label{dmp1}
Suppose $H:X\times \ui\to Y$ is a map such that $f:X\to Y$, $f(x)=H(x,0)$ is $\pi_n$-injective. If $x_0\in\wildn(X)$ and $y_0=f(x_0)\in \rgn(Y)$, then $H(x_0,0)=H(x_0,1)$.
\end{proposition}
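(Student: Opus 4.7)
The plan is to exploit the shrinking-wedge self-similarity of $\bbe_n$ to build a merged rigidity witness at $y_0$ combining an arbitrary rigidity witness with the $\pi_n$-injective image of a wildness witness at $x_0$, and then to apply the resulting rigidity constraint to the natural free homotopy $(a,t)\mapsto H(\alpha(a),t)$ coming from $H$.

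First I would fix a rigidity witness $g:(\bbe_n,b_0)\to(Y,y_0)$ for $y_0\in\rgn(Y)$ together with a fully essential $\alpha:(\bbe_n,b_0)\to(X,x_0)$ witnessing $x_0\in\wildn(X)$; by $\pi_n$-injectivity of $f$, each restriction $f\circ\alpha_j$ is essential, so $f\circ\alpha:(\bbe_n,b_0)\to(Y,y_0)$ is fully essential. Via the canonical identification $\bbe_n\cong\bbe_n\vee\bbe_n$ (e.g. by sending the spheres indexed $2j-1$ and $2j$ to the $j$-th sphere of the first and second copy respectively), define $h:(\bbe_n,b_0)\to(Y,y_0)$ to agree with $g$ on the first copy and with $f\circ\alpha$ on the second. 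Then $h$ is fully essential, and inherits rigidity from $g$: any free homotopy $\widetilde F:\bbe_n\times\ui\to Y$ extending $h$ restricts on the first copy of $\bbe_n$ to a free homotopy extending $g$, so by rigidity of $g$ one has $\widetilde F(b_0,1)=y_0$.

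Next I would construct the extension $\widetilde F$ of $h$ using the data of $H$ and $\alpha$. On the second copy of $\bbe_n$ set $\widetilde F(a,t):=H(\alpha(a),t)$; this is continuous by composition, extends $f\circ\alpha$, and has basepoint track $t\mapsto H(x_0,t)$. On the first copy of $\bbe_n$, build a continuous extension of $g$ with the \emph{same} basepoint track: on each individual sphere $S^n_j$ of the first copy, use the path-lifting property of the evaluation fibration $Y^{S^n}\to Y$ (valid because $\{s_0\}\hookrightarrow S^n$ is a cofibration) to lift the path $H(x_0,\cdot)$ starting at $g_j$, and assemble these sphere-wise lifts into a continuous map using the shrinking nature of the sequence $\{g_j\}$. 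This assembly is the technical heart of the argument.

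Finally, the constructed $\widetilde F$ is a continuous extension of $h$ with $\widetilde F(b_0,t)=H(x_0,t)$ by the second-copy definition, hence $\widetilde F(b_0,1)=H(x_0,1)$; but applying the rigidity of $h$ to this $\widetilde F$ forces $\widetilde F(b_0,1)=y_0$. Combining these equalities yields $H(x_0,1)=y_0=H(x_0,0)$, as desired.

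The main obstacle will be the global continuity of the sphere-wise lifts on the first copy: individually the lifts exist by the cofibration property, but the combined map must also be continuous at the wedgepoint $(b_0,t)$, which requires the images of the lifts on large-index spheres to cluster around the basepoint track $t\mapsto H(x_0,t)$ as $j\to\infty$. Resolving this amounts to choosing per-sphere lifts whose images lie in shrinking neighborhoods of the set $g_j(S^n_j)\cup H(x_0,\ui)$, using continuity of $H$ restricted to $\{x_0\}\times\ui$.
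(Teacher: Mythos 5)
Your opening and closing moves coincide with the paper's proof: the paper simply sets $G=H\circ(\alpha\times id_{\ui})$, notes that $G(\cdot,0)=f\circ\alpha$ is fully essential because $f$ is $\pi_n$-injective, and invokes $y_0\in\rgn(Y)$ to conclude $G(b_0,1)=y_0$, hence $H(x_0,1)=y_0=H(x_0,0)$. That is the entire argument; the rigidity property at $y_0$ is applied directly to the fully essential map $f\circ\alpha$, with no merging of witnesses. Your additional machinery is prompted by the literal ``there exists'' quantifier in the definition of $\pi_n$-rigidity, and the instinct to bridge from an arbitrary witness to $f\circ\alpha$ is understandable, but the bridge you propose does not actually close.

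The gap is exactly the step you flag as the technical heart: extending the rigidity witness $g$ over the first copy of $\bbe_n$ by a free homotopy whose basepoint track is $t\mapsto H(x_0,t)$. The inclusion $\{b_0\}\hookrightarrow\bbe_n$ is \emph{not} a cofibration --- every neighborhood of $b_0$ contains whole spheres, each of which is a retract of $\bbe_n$ and hence essential, so no neighborhood of $b_0$ can be deformed into $b_0$ --- and accordingly the evaluation map $Y^{\bbe_n}\to Y$ need not admit path lifting. Concretely, continuity of the assembled $\wt{F}$ at $(b_0,t)$ with $t>0$ forces the following: for every neighborhood $V$ of $H(x_0,t)$, all but finitely many spheres of the first copy must be mapped into $V$ at all times near $t$; that is, the transported $j$-th sphere at time $t$ must shrink to the single point $H(x_0,t)$ as $j\to\infty$. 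Your proposed control --- images lying in shrinking neighborhoods of $g_j(S^n)\cup H(x_0,\ui)$ --- is too weak, since that set contains $y_0=H(x_0,0)$, which need not be close to $H(x_0,t)$; the standard cofibration-based lift on each sphere has image $\im(g_j)\cup H(x_0,[0,t])$ and therefore fails this test. Whether small essential spheres near $y_0$ can be carried along $H(x_0,\cdot)$ while remaining small at every intermediate time is a nontrivial local condition on $Y$ that does not hold in general, so no choice of sphere-wise lifts completes your construction. The correct route is the paper's: apply the rigidity conclusion at $y_0$ directly to the fully essential map $f\circ\alpha$ and its extension $H\circ(\alpha\times id_{\ui})$, which makes the merging step unnecessary.
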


\begin{proof}
Since $x_0\in\wildn(X)$, there exists a fully essential map $\alpha:(\bbe_n,b_0)\to (X,x_0)$. Consider $G=H\circ (\alpha\times id_{\ui}):\bbe_n\times\ui\to Y$. Let $g:\bbe_n\to Y$ be the map $g(a)=G(a,0)$. Since $g=f\circ \alpha$ where $f$ is $\pi_n$-injective, $g$ is fully essential. Since $y_0\in \rgn(Y)$, we conclude that $G(b_0,1)=y_0$. Thus $H(x_0,0)=y_0=G(b_0,1)=H(x_0,1)$.
\end{proof}

\begin{corollary}
If $k:X\to X$ is homotopic to the identity map, then $k(x)=x$ for all $x\in\rgn(X)$.
\end{corollary}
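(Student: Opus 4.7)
The plan is to apply Proposition \ref{dmp1} directly, taking $Y=X$ and letting $H$ be the homotopy witnessing $k\simeq id_X$. First I would let $H:X\times\ui\to X$ be a homotopy with $H(x,0)=x$ and $H(x,1)=k(x)$, so that the ``initial'' map $f:X\to X$, $f(x)=H(x,0)$, is the identity map. The identity is trivially $\pi_n$-injective, which verifies the hypothesis on $f$ needed to invoke Proposition \ref{dmp1}.

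Next I would fix an arbitrary $x\in\rgn(X)$. Since $\rgn(X)\subseteq\wildn(X)$ (this containment is noted immediately after Definition \ref{rigid}), we have both $x\in\wildn(X)$ and $y_0:=f(x)=x\in\rgn(Y)=\rgn(X)$. Thus both hypotheses of Proposition \ref{dmp1} are satisfied, so the proposition yields $H(x,0)=H(x,1)$, which unpacks as $x=k(x)$. Since $x\in\rgn(X)$ was arbitrary, this proves $k$ fixes $\rgn(X)$ pointwise.

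There is no substantial obstacle here: the proof is a one-line consequence of Proposition \ref{dmp1}, and the only thing worth emphasizing is the interpretation, namely, that $\pi_n$-rigid points behave like the $\pi_1$-wild points of one-dimensional and planar Peano continua in that they are fixed by every self-map homotopic to the identity. This is precisely the rigidity feature that will allow one to upgrade the homotopy invariance of $\wildn(X)$ (Theorem \ref{homotopyinvariance}) to homeomorphism invariance for completely $\pi_n$-rigid spaces in Theorem \ref{rigidthm1}.
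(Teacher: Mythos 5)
Your proof is correct and is essentially identical to the paper's argument: both apply Proposition \ref{dmp1} with $Y=X$, $f=id_X$, and the homotopy $H$ from $id_X$ to $k$. Your explicit verification that $x\in\rgn(X)$ implies $x\in\wildn(X)$ is a detail the paper leaves implicit, but the route is the same.
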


\begin{proof}
Suppose $H:X\times \ui\to X$ is a map such that $f(a)=H(a,0)=a$ is the identity map and $H(a,1)=k(a)$. Let $x\in \rgn(X)$. Since $f(x)=x$, we have $x=H(x,0)=H(x,1)=k(x)$ by Proposition \ref{dmp1}.
\end{proof}

\begin{corollary}\label{hicor}
If $f:X\to Y$ and $g:Y\to X$ are homotopy inverses, then $g\circ f(x)=x$ for all $x\in\rgn(X)$ and $f\circ g(y)=y$ for all $y\in\rgn(Y)$.
\end{corollary}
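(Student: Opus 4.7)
The plan is to reduce the statement to the immediately preceding corollary by recognizing that the compositions $g\circ f$ and $f\circ g$ are self-maps of $X$ and $Y$ respectively that happen to be homotopic to the identity. Concretely, first I would recall that since $f$ and $g$ are homotopy inverses there exist homotopies $H:X\times \ui \to X$ from $id_X$ to $g\circ f$ and $G:Y\times \ui \to Y$ from $id_Y$ to $f\circ g$; these are exactly the hypotheses needed to invoke the previous corollary, which asserts that any self-map of $X$ homotopic to the identity fixes every point of $\rgn(X)$, and symmetrically for $Y$.

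Then I would apply the previous corollary to the self-map $k = g\circ f: X\to X$ to conclude $k(x) = g\circ f(x) = x$ for all $x\in \rgn(X)$, and apply it again to $k' = f\circ g: Y\to Y$ to conclude $k'(y) = f\circ g(y) = y$ for all $y\in \rgn(Y)$. Since there is really no further content beyond these two direct applications, I expect no obstacle at all; the only subtlety worth a sentence is that the previous corollary was itself a one-line deduction from Proposition \ref{dmp1} via the observation that the identity map is $\pi_n$-injective, so the present corollary ultimately just repackages Proposition \ref{dmp1} in the setting where both maps involved are homotopy equivalences. Thus the entire proof can be written in two sentences.
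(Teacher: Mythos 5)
Your proposal is correct and matches the paper's (implicit) argument exactly: the paper states this corollary without a separate proof, precisely because it follows by applying the preceding corollary to the self-maps $g\circ f\simeq id_X$ and $f\circ g\simeq id_Y$. Nothing is missing.
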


Note that Corollary \ref{hicor} does not imply that $f$ restricts to a homeomorphism on $\pi_n$-rigid wild sets because it need not be the case that $f$ maps $\rgn(X)$ into $\rgn(Y)$, e.g. if $f$ is the embedding $\bbe_n\to \bbe_n\times\ui$, $x\mapsto (x,0)$, then $\rgn(\bbe_n)=\{b_0\}$ and $\rgn(\bbe_n\times\ui)=\emptyset$. However, when $X$ is completely $\pi_n$-rigid, we have the following consequence.

\begin{theorem}\label{rigidthm1}
If $X$ and $Y$ are homotopy equivalent completely $\pi_n$-rigid spaces, then $\wildn(X)$ and $\wildn(Y)$ are homeomorphic.
\end{theorem}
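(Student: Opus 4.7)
The plan is to use the results already in place (Lemma \ref{inductionsteplemma} and Corollary \ref{hicor}) to assemble a homeomorphism directly. Let $f:X\to Y$ and $g:Y\to X$ be homotopy inverses. Since homotopy equivalences induce isomorphisms on all homotopy groups, both $f$ and $g$ are $\pi_n$-injective at every basepoint, so by Lemma \ref{inductionsteplemma} we get continuous restrictions $\bar f := f|_{\wildn(X)} : \wildn(X) \to \wildn(Y)$ and $\bar g := g|_{\wildn(Y)} : \wildn(Y) \to \wildn(X)$.

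Next I would invoke Corollary \ref{hicor} to see that $g\circ f(x) = x$ for every $x \in \rgn(X)$ and $f\circ g(y) = y$ for every $y \in \rgn(Y)$. This is where the complete $\pi_n$-rigidity hypothesis enters: by assumption $\rgn(X) = \wildn(X)$ and $\rgn(Y) = \wildn(Y)$, so these equalities hold on the full wild sets. Translating this, $\bar g \circ \bar f = \mathrm{id}_{\wildn(X)}$ and $\bar f \circ \bar g = \mathrm{id}_{\wildn(Y)}$.

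Finally, $\bar f$ and $\bar g$ are mutually inverse continuous maps, hence homeomorphisms. I would note that this is essentially the rigid analogue of the homotopy-invariance argument used to prove Theorem \ref{homotopyinvariance}: the same restricted maps that there served only as homotopy inverses are here forced, by the pointwise identities from Corollary \ref{hicor}, to be strict set-theoretic inverses.

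There is no genuine obstacle once Corollary \ref{hicor} is available; the work of the section has been front-loaded into Proposition \ref{dmp1} and its consequences. The only subtlety worth flagging is conceptual rather than technical: complete $\pi_n$-rigidity of \emph{both} $X$ and $Y$ is essential, since (as noted just before the theorem) a homotopy equivalence need not send $\rgn(X)$ into $\rgn(Y)$ in general, and the example $\bbe_n \hookrightarrow \bbe_n\times\ui$ shows that $\pi_n$-rigidity can be destroyed by homotopy equivalence; the hypothesis that $Y$ is completely $\pi_n$-rigid rules this out by identifying $\rgn(Y)$ with the whole of $\wildn(Y)$.
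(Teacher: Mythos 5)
Your proposal is correct and follows essentially the same route as the paper's proof: restrict $f$ and $g$ to the wild sets via Lemma \ref{inductionsteplemma}, use complete $\pi_n$-rigidity to identify $\rgn$ with $\wildn$ on both sides, and apply Corollary \ref{hicor} to conclude the restrictions are mutually inverse continuous maps. No gaps.
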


\begin{proof}
If $f:X\to Y$ and $g:Y\to X$ are homotopy inverses, then $f(\wildn(X))\subseteq \wildn(Y)$ and $g(\wildn(Y))\subseteq \wildn(X)$. By hypothesis, we have $\rgn(X)=\wildn(X)$ and $\rgn(Y)=\wildn(Y)$. Thus $f(\rgn(X))\subseteq \rgn(Y)$ and $g(\rgn(Y))\subseteq \rgn(X)$ and Corollary \ref{hicor} implies that $f|_{\rgn(X)}:\rgn(X)\to \rgn(Y)$ and $g|_{\rgn(Y)}:\rgn(Y)\to \rgn(X)$ are inverse homeomorphisms.
\end{proof}

As mentioned previously, the following theorem is implicit in the work of Eda \cite{Edaonedim} and Conner-Kent \cite{ConnerKent} and follows explicitly from the results in \cite[Prop. 9.13]{BFPantsspace} and the fact that the DM-Property implies the completely $\pi_1$-rigid property.

\begin{theorem}\label{onerigidtheorem}
If $X$ is a one-dimensional metric space or planar set, then $X$ is completely $\pi_1$-rigid.
\end{theorem}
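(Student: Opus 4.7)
The plan is to factor the result through the DM-property introduced in the preceding Remark. Namely, I would establish (i) the implication DM-property $\Rightarrow$ completely $\pi_1$-rigid, and (ii) invoke \cite[Proposition 9.13]{BFPantsspace}, which establishes the DM-property for one-dimensional metric spaces and planar sets. Step (ii) is the deep input supplied by the literature; step (i) is what the proof actually contributes, and it is flagged as ``certain'' in the Remark but worth spelling out.

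For step (i), suppose $X$ has the DM-property and let $x\in\wild_1(X)$, witnessed by a fully essential map $f:(\bbe_1,b_0)\to(X,x)$ with $j$-th restriction $\gamma_j=f\circ\ell_j$. I would show that this same $f$ places $x$ in $\rg_1(X)$. Assume toward contradiction that some homotopy $F:\bbe_1\times\ui\to X$ extends $f$ by $F(a,0)=f(a)$, and consider the basepoint path $\beta(t)=F(b_0,t)$. If $\beta$ is an inessential loop then $F(b_0,1)=\beta(1)=\beta(0)=x$ and we are done, so assume otherwise. The DM-property then furnishes neighborhoods $U$ of $\beta(0)=x$ and $V$ of $\beta(1)$ satisfying its conclusion. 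By continuity of $f$ and of $a\mapsto F(a,1)$ at $b_0\in\bbe_1$, there is some $J$ such that for every $j\geq J$ we have $\gamma_j\subseteq U$ and the loop $\delta_j(s)=F(\ell_j(s),1)$ lies in $V$. For each such $j$, the restriction $F|_{\ell_j(S^1)\times\ui}$ is a free homotopy from $\gamma_j$ to $\delta_j$ whose basepoint track is $\beta$, yielding the path-homotopy relation $\gamma_j\simeq\beta\cdot\delta_j\cdot\beta^{-}$ at $x$. By the DM-property $\gamma_j$ must be inessential, contradicting full essentiality of $f$. Hence $F(b_0,1)=x$, so $x\in\rg_1(X)$; combined with the obvious inclusion $\rg_1(X)\subseteq\wild_1(X)$, this gives complete $\pi_1$-rigidity.

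The principal obstacle, and the reason I would not attempt to reprove step (ii) here, is the planar case: the DM-property for planar sets rests on the deep structural analysis of planar fundamental groups by Conner-Kent \cite{ConnerKent}, as packaged in \cite[\S 9]{BFPantsspace}. The one-dimensional case is cleaner, flowing from $\pi_1$-shape injectivity \cite{Edaonedim,CConedim} together with the rigidity of reduced-word representatives in the associated inverse system of finite graphs, but it still requires substantial external infrastructure. Since both facts are already in the literature, the cleanest form of the proof is the one-sentence composition of \cite[Proposition 9.13]{BFPantsspace} with the elementary reduction (i) above.
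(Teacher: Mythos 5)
Your proposal matches the paper's approach exactly: the paper also derives the theorem from \cite[Prop.\ 9.13]{BFPantsspace} combined with the implication ``DM-property $\Rightarrow$ completely $\pi_1$-rigid,'' which it asserts without proof in the preceding remark. Your expansion of that implication (passing to a tail of circles landing in $U$ and $V$ and applying the basepoint-track path-homotopy $\gamma_j\simeq\beta\cdot\delta_j\cdot\beta^{-}$) is correct and supplies the detail the paper omits.
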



All one-dimensional metric spaces have the property that their fundamental groups canonically inject into their first shape homotopy group and this fact plays a role in the proof of Theorem \ref{onerigidtheorem}. Thus, if we are searching for a higher dimensional analogue of Theorem \ref{onerigidtheorem}, it is natural to consider spaces for which their $n$-th homotopy group canonically embeds into the $n$-th homotopy shape group. We refer to \cite{MS82} for the foundations of Shape Theory and to \cite{AcetiBrazas} for an explicit description of the canonical homeomorphism $\Psi_n:\pi_n(X)\to \check{\pi}_n(X)$ defined in terms of the Cech expansion of $X$. A based space $X$ is \textit{$\pi_n$-shape injective} if $\Psi_n$ is injective. Since our focus is on compact metric spaces, we only require the following simpler description of $\Psi_n$: Suppose $(X,x_0)=\varprojlim_{i\in\bbn}((K_i,k_i),q_{i+1,i})$ is an inverse limit of compact polyhedra $K_i$ and based continuous functions $q_{i+1,i}:K_{i+1}\to K_i$. Let $q_i:X\to K_i$ be the projection maps. Then because $X$ is compact Hausdorff, the system $((K_i,k_i),q_{i+1,i})$ serves as an $HPol_{\ast}$-expansion of $(X,x_0)$ \cite[Ch I, \textsection 5.4, Theorem 13]{MS82}. Thus $X$ is $\pi_n$-shape injective if and only if the canonical homomorphism $\Psi_n:\pi_n(X,x_0)\to \varprojlim_{i\in\bbn}\pi_n(K_i,k_i)$, $\Psi_n([f])=([q_i\circ f])_{i\in\bbn}$ is injective.

The next lemma is proved using standard methods in homotopy theory so we give a brief sketch of the argument.

\begin{lemma}\label{freelyhomlemma}
Suppose $n\geq 1$ and $P$ is a $n$-dimensional polyhedron with an $(n-1)$-connected universal cover $\wt{P}$. If $f:S^n\to P$ and $g:S^n\to P$ are freely homotopic maps with disjoint images, then $f$ and $g$ are inessential.
\end{lemma}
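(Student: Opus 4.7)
The plan is to lift the free homotopy $H:S^n\times\ui\to P$ along the universal covering map $p:\wt{P}\to P$ and exploit the fact that $\wt{P}$ is $(n-1)$-connected and $n$-dimensional.

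First, suppose $n\geq 2$. Since $S^n\times \ui$ is simply connected, $H$ lifts to a map $\wt{H}:S^n\times\ui\to\wt{P}$, producing lifts $\wt{f},\wt{g}:S^n\to\wt{P}$ of $f$ and $g$. The hypothesis $\im(f)\cap\im(g)=\emptyset$ yields $p^{-1}(\im(f))\cap p^{-1}(\im(g))=\emptyset$, so in particular $\im(\wt{f})\cap\im(\wt{g})=\emptyset$. By Hurewicz, $\pi_n(\wt{P})\cong H_n(\wt{P})$, and since $\wt{P}$ admits a triangulation of dimension $n$, $H_n(\wt{P})=Z_n(\wt{P})$ (no $(n+1)$-chains, hence no boundaries). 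Choose a triangulation of $\wt{P}$ fine enough that $\im(\wt{f})$ and $\im(\wt{g})$ lie in disjoint subcomplexes, and apply simplicial approximation to replace $\wt{f},\wt{g}$ by simplicial maps with the same property. Then $\wt{f}_{\ast}[S^n]$ and $\wt{g}_{\ast}[S^n]$ are simplicial $n$-cycles with disjoint supports. They represent the same class in $H_n(\wt{P})=Z_n(\wt{P})$ because $\wt{f}\simeq\wt{g}$ via $\wt{H}$, so they are equal as chains; disjointness of supports forces both to vanish. Hence $[\wt{f}]=0$ in $\pi_n(\wt{P})$, and since $p_{\#}:\pi_n(\wt{P})\to\pi_n(P)$ is an isomorphism for $n\geq 2$, $f$ (and hence $g$) is inessential.

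For $n=1$, the universal cover $\wt{P}$ is a simply connected $1$-dimensional polyhedron, hence a tree. One lifts the composition $\bbr\times\ui\to S^1\times\ui\xrightarrow{H} P$ to $\wt{H}:\bbr\times\ui\to\wt{P}$. The restrictions $\wt{f},\wt{g}:\bbr\to\wt{P}$ satisfy $\wt{f}(x+1)=T\wt{f}(x)$ and $\wt{g}(x+1)=T\wt{g}(x)$ for a common deck transformation $T\in\pi_1(P)$; it is the same $T$ on both boundary slices because deck transformations depend continuously on $t$ yet form a discrete group, and $T$ corresponds to the common free-homotopy class of $f$ and $g$. We still have $\im(\wt{f})\cap\im(\wt{g})=\emptyset$. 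If $T\neq 1$, then because the deck action on $\wt{P}$ is free, $T$ fixes no point of the tree, so by the standard tree dichotomy $T$ translates along a unique invariant axis $A\subseteq\wt{P}$. Since any path in a tree covers the geodesic between its endpoints, every nonempty connected $T$-invariant subset of $\wt{P}$ contains $A$, giving $A\subseteq\im(\wt{f})\cap\im(\wt{g})=\emptyset$, a contradiction. Therefore $T=1$ and $f$ (hence $g$) is null-homotopic.

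The main obstacle when $n\geq 2$ is arranging the simplicial approximation to preserve the disjointness of $\im(\wt{f})$ and $\im(\wt{g})$; this is handled by first picking a triangulation of $\wt{P}$ in which these disjoint compact subpolyhedra have disjoint star neighborhoods, so that sufficiently close approximations remain in those neighborhoods. When $n=1$, the obstruction is that $S^1\times\ui$ is not simply connected, which is sidestepped by passing to its universal cover $\bbr\times\ui$ before lifting, after which the argument reduces to the rigidity of free group actions on trees.
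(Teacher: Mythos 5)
Your proof is correct, and while it shares the paper's overall architecture for $n\geq 2$ (pass to the universal cover, apply Hurewicz, and use $n$-dimensionality to force the common homology class to vanish), the mechanism you use at the key step is genuinely different. The paper keeps the two images in disjoint subcomplexes $|A|$ and $|B|$ and runs a Mayer--Vietoris argument: it chooses $U\supseteq |A|$ and $V$ with $U\cap V$ deformation retracting onto a polyhedron of dimension at most $n-1$, so $H_n(U\cap V)=0$ and the difference map $H_n(U)\oplus H_n(V)\to H_n(\wt{P})$ is injective, killing both classes. You instead observe that an $n$-dimensional complex has no $(n+1)$-chains, so $H_n=Z_n$ and homologous cycles are \emph{equal} as chains; disjoint supports then force both to vanish. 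Your route trades the excision bookkeeping for the (real but manageable) care needed to make the simplicial approximations of $\wt{f}$ and $\wt{g}$ retain disjoint images, which you correctly flag and handle via disjoint star neighborhoods of the two disjoint compacta. For $n=1$ the divergence is larger: the paper quotes the algebraic fact that in a free product no nontrivial element of one factor is conjugate into the other, whereas you lift to the tree $\wt{P}$ and argue that a nontrivial deck transformation $T$ is fixed-point free, hence hyperbolic with an axis that must lie in every nonempty connected $T$-invariant subset, so it would lie in both $\im(\wt{f})$ and $\im(\wt{g})$, contradicting disjointness. Your tree argument is self-contained and geometric where the paper's is terse; both are valid, and your version has the minor advantage of not needing to justify that the disjoint subgraphs sit as free factors of $\pi_1(P)$.
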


\begin{proof}
Find simplicial complex $K$ with $|K|\cong P$ and identify these spaces. By taking a sufficiently fine subdivision of $K$, we may assume that there are disjoint subcomplexes $A$ and $B$ of $K$ such that $\im(f)\subseteq |A|$ and $\im(g)\subseteq |B|$. The case $n\geq 1$ follows from the fact that in a free product $G_1\ast G_2$, no non-trivial element of $G_1$ is conjugate to an element of $G_2$. Supposing $n\geq 2$, we may replace $|K|$ with it's universal cover. Thus we may assume that $|K|$ is $(n-1)$-connected and by the Hurewicz Theorem, we have $\pi_n(|K|)\cong H_n(|K|)$. Find open sets $U,W$ with $|A|\subseteq \ov{W}\subseteq U\subseteq |K|\backslash |B|$ such that if $V=|K|\backslash \ov{W}$, then $U$ deformation retracts onto $|A|$ and $U\cap V$ deformation retracts on to a subpolyhedron of dimension at most $n-1$. Since $H_{n}(U\cap V)=0$, the subtraction map $d:H_n(U)\oplus H_n(V)\to H_n(|K|)$ from the Meyer-Vietoris sequence is injective. Consider the $n$-dimensional homology classes $\alpha\in H_n(U)$ and $\beta\in H_n(V)$ corresponding to $f$ and $g$ respectively. Since $f$ and $g$ are freely homotopic, $i_{\ast}(\alpha)=j_{\ast}(\beta)$ in $H_n(|K|)$ where $i:U\to |K|$ and $j:V\to |K|$ are the inclusions. Thus $d(\alpha,\beta)=i_{\ast}(\alpha)-j_{\ast}(\beta)=0$ and the injectivity of $d$ gives $\alpha=\beta=0$. We conclude that $f$ and $g$ give trivial homology classes in $H_n(|K|)$ and, consequently, trivial homotopy classes in $\pi_n(|K|)$.
\end{proof}

\begin{proof}[Proof of Theorem \ref{thm2}]
Suppose $X=\varprojlim_{i\in\bbn}(K_i,q_{i+1,i})$ is a $\pi_n$-shape injective inverse limit of a sequence of based $n$-dimensional compact polyhedra $K_i$ with $(n-1)$-connected universal covers $\wt{K_n}$. Let $q_i:X\to K_i$ be the projection maps. Since $X$ is compact Hausdorff, the system $(K_i,p_{i+1,i})$ serves as an $HPol_{\ast}$-expansion and, as mentioned above, the canonical homomorphism $\Psi_n:\pi_n(X)\to \varprojlim_{i\in\bbn}\pi_n(K_i)$, $\Psi_n([f])=([q_i\circ f])_{i\in\bbn}$ is injective.

To obtain a contradiction, suppose that $X$ is not completely $\pi_n$-rigid. Then there exists a map $H:\bbe_n\times\ui\to X$ such that $a_0=H(b_0,0)\neq H(b_0,1)=a_1$ and such that if $f_t:\bbe_n\to X$ is defined by $f_t(a)=H(a,t)$, then the maps $f_0,f_1:\bbe_n\to X$ are fully essential. Let $f_{t,j}:S^n\to X$ denote the $j$-th restriction of $f_t$. 

Find an $i_1\in\bbn$ such that $q_{i_1}(a_0)\neq q_{i_1}(a_1)$. Let $U_0$ and $U_1$ be disjoint neighborhoods of $q_{i_1}(a_0)$ and $q_{i_1}(a_1)$ in $K_{i_1}$. Find $J\in\bbn$ such that $\im(q_{i_1}\circ f_{0,J})\subseteq U_0$ and $\im(q_{i_1}\circ f_{1,J})\subseteq U_1$. Since $\Psi_n$ is injective, we may find $i_2\geq i_1$ such that $q_{i_2}\circ f_{0,J}:S^n\to K_{i_2}$ and $q_{i_2}\circ f_{1,J}:S^n\to K_{i_2}$ are essential. Let $q_{i_2,i_1}=q_{i_1+1,i_1}\circ q_{i_1+2,i_1+1}\circ \cdots \circ q_{i_2,i_2-1}$. Since $q_{i_2,i_1}\circ q_{i_2}=q_{i_1}$, the images of the maps $q_{i_2}\circ f_{0,J}$ and $q_{i_2}\circ f_{1,J}$ lie in the disjoint sets $q_{i_2,i_1}^{-1}(U_0)$ and $q_{i_2,i_1}^{-1}(U_1)$ respectively. Note that $G:S^n\times \ui\to K_{i_2}$, $G(x,t)=q_{i_2}\circ f_{t,J}(x)$ gives a free homotopy between $q_{i_2}\circ f_{0,J}$ and $q_{i_2}\circ f_{1,J}$. But the maps $q_{i_2}\circ f_{0,J}$ and $q_{i_2}\circ f_{1,J}$ are essential and have disjoint images, contradicting Lemma \ref{freelyhomlemma}.
\end{proof}

\begin{example}
For $i\in\{1,2\}$, let $X_i$ be a one-dimensional Peano continuum and let $A_i\subseteq X_i$ be a countable dense set. The $\pi_n$-wildification $Y_i=\san(X_i,A_i,\bbe_n)$ is a n-dimensional Peano continuum (Proposition \ref{peanoprop}) with $\pi_n$-wild set $\wildn(Y_i)=X_i$ (Lemma \ref{sanlemma}). In forthcoming work, the first author has used generalized covering space theory to show that spaces of the form $Y_i$ are $\pi_n$-shape injective. Moreover, since $X_i$ is an inverse limit of finite graphs, the space $Y_i$ is an inverse limits of finite graphs with finitely many $n$-spheres attached. Such approximating spaces satisfy the hypotheses of Theorem \ref{thm2}. Thus $Y_1,Y_2$ are completely $\pi_n$-rigid. This allows us to produce continuum-many distinct homotopy types of $2$-dimensional Peano continua since if the one-dimensional spaces $X_1$ and $X_2$ are not homeomorphic, then the resulting $\pi_n$-wildification spaces $Y_1$ and $Y_2$ are not homotopy equivalent.
\end{example}

\begin{example}\label{connectednessexample}
Here, we illustrate the importance of the higher connectedness hypothesis in Theorem \ref{thm2}. The space $\bbe_{2}\times\ui$ is a simply connected, $3$-dimensional Peano continuum. However, $\bbe_{2}\times\ui$ does not meet the connectedness hypothesis of Theorem \ref{thm2} (approximating polyhedra have non-trivial $\pi_2$). Moreover, $\bbe_{2}\times\ui$ is not completely $\pi_3$-rigid. Indeed, let $\eta:S^3\to S^2$ be the Hopf fibration. We may define a map $f:\bbe_3\to\bbe_2$ so that the $j$-th restriction is $f_j=\eta$ for all $j$. Then the map $f\times id_{\ui}:\bbe_3\times \ui\to \bbe_{2}\times\ui$ shows that $\wild_3(\bbe_{2}\times\ui)=\{b_0\}\times \ui$ and $\rg_3(\bbe_{2}\times\ui)=\emptyset$. 
\end{example}





\end{document}